\title{\textbf{\Large A Reflected Moving Boundary Problem Driven by Space-Time White Noise}}
\author{Ben Hambly\footnote{ \href{mailto:ben.hambly@maths.ox.ac.uk}{ben.hambly@maths.ox.ac.uk}}\hspace{2mm} and Jasdeep Kalsi \footnote{\href{mailto:jasdeep.kalsi@maths.ox.ac.uk}{jasdeep.kalsi@maths.ox.ac.uk}}.\\\\
Mathematical Institute, University of Oxford\\\\}
\date{\today}
\newtheorem{thm}{Theorem}[section]
\newtheorem{lem}[thm]{Lemma}
\newtheorem{cor}[thm]{Corollary}
\newtheorem{defn}[thm]{Definition}
\newtheorem{prop}[thm]{Proposition}
\newtheorem{rem}[thm]{Remark}
\newlength{\bibitemsep}\setlength{\bibitemsep}{.2\baselineskip plus .05\baselineskip minus .05\baselineskip}
\newlength{\bibparskip}\setlength{\bibparskip}{0pt}
\let\oldthebibliography\thebibliography
\renewcommand\thebibliography[1]{%
  \oldthebibliography{#1}%
  \setlength{\parskip}{\bibitemsep}%
  \setlength{\itemsep}{\bibparskip}%
}
\numberwithin{equation}{section}
\newcommand{\xRightarrow}[2][]{\ext@arrow 0359\Rightarrowfill@{#1}{#2}}
\begin{document}
\maketitle

\begin{abstract}
\noindent
We study a system of two reflected SPDEs which share a moving boundary. The equations describe competition at an interface and are motivated by the 
modelling of the limit order book in financial markets. The derivative of the moving boundary is given by a function of the two SPDEs in their relative 
frames. We prove existence 
and uniqueness for the equations until blow-up, and show that the solution is global when the boundary speed is bounded. We also derive the expected 
H\"{older} continuity for the process and hence for the derivative of the moving boundary. Both the case when the spatial domains are given by fixed 
finite distances from the shared boundary, and when the spatial domains are the semi-infinite intervals on either side of the shared boundary are considered. 
In the second case, our results require us to further develop the known theory for reflected SPDEs on infinite spatial domains by extending the uniqueness 
theory and establishing the local H\"{o}lder continuity of the solutions.
\end{abstract}

\section{Introduction}

There are many models for the behaviour of interfaces that arise in physical, biological and financial problems. In this paper we explore interfaces in one
dimension determined by competition between two types, which could be thought of as particles, species or offers to buy or sell, depending on the application. 
We think of a type as occupying a region on one side of the interface and evolving according to a reflected stochastic partial differential equation driven 
by white noise with a Dirichlet condition on the interface. The interface itself moves as a function of the profiles of the two types. One motivation is the 
evolution of the limit order book in a financial market. In this setting orders to buy or sell arrive at a rate determined by their distance to the best price 
and prices will rise or fall according to the order flow imbalance between the bid and ask sides of the book.
It is also possible to build biological models in which two species interact at an interface and their behaviour is determined by the distance of individuals 
from the interface. We do not focus on the modelling aspects, instead our aim is to establish existence, uniqueness and some properties of solutions to equations 
of the form
\begin{equation}\label{mmovingboundaryoriginal}\begin{split}
& \frac{\partial u^1}{\partial t}= \Delta u^1 + f_1(p(t)-x, u^1(t,x))+ \sigma_1(p(t)-x,u^1(t,x))\dot{W} + \eta^1 \\ 
& \frac{\partial u^2}{\partial t}= \Delta u^2 + f_2(x-p(t),u^2(t,x))+ \sigma_2(x-p(t),u^2(t,x))\dot{W} + \eta^2,
\end{split}
\end{equation}
where $u^1$ and $u^2$ have spatial domains on either side of the point $p(t)$ at any given time, and $\eta^1$, $\eta^2$ are reflection measures which 
keep the profiles positive. We impose Dirichlet boundary conditions so that $u^1(t,p(t))=u^2(t,p(t))=0$, with the point $p(t)$ evolving according to the equation 
\begin{equation}\label{boundary equation}
p^{\prime}(t)= h(u^1(t,p(t)- \cdot), u^2(t,\cdot-p(t))),
\end{equation}
where $h$ is a Lipschitz function mapping pairs of continuous functions to real numbers. The driving noise $\dot{W}$ here is space-time white 
noise, whilst the drift and diffusion coefficients $f_i$ and $\sigma_i$ for $i=1,2$ depend on the spatial coordinate in the frame relative to the boundary, 
as well as the value of the solution itself at that point. 

Within the class of equations produced by this model are approximations of the Stefan problem, where the motion of the boundary would be given by  
\begin{equation}
p^{\prime}(t)= \frac{\partial u^1}{\partial x}(t,p(t))- \frac{\partial u^2}{\partial x}(t,p(t)).
\end{equation}
The combination of the space-time white noise, moving boundary and the reflection measure make it difficult to find conditions which ensure differentiability 
of the profiles at the boundary, and so to arrive at an equation with precisely these dynamics. However, by choosing $h$ to be a function which 
emphasises the mass close to the interface, so that the boundary is still being moved by the ``relative pressure" of the two sides, we will have existence
and uniqueness for a system which approximates the Stefan problem. 

\subsection{Reflected SPDE and SPDEs with Moving Boundaries in the Literature}

Reflected SPDEs of the type 
\begin{equation}\label{reflll}
\frac{\partial u}{\partial t} = \Delta u + f(x,u)+ \sigma(x,u) \dot{W} + \eta,
\end{equation}
where $\dot{W}$ is space-time white noise and $\eta$ is a reflection measure, were initially studied in \cite{NP}. In \cite{NP}, the domain for the equation is $[0,T] \times [0,1]$, with Dirichlet boundary conditions imposed on $u$. Existence and uniqueness are established for the equation in the case of constant volatility i.e. $\sigma \equiv 1$. Existence for the equation in the case when $\sigma= \sigma(x,u)$ satisfies Lipschitz and linear growth conditions in its second argument was then established in \cite{DMP} using a penalization method. 
The penalization approach was adapted in \cite{Otobe}, in which the corresponding result is proved in the case when the domain for equation (\ref{reflll}) is $[0,T] \times \mathbb{R}$. Uniqueness for varying volatility $\sigma= \sigma(x,u)$ on compact spatial domains was then proved by in \cite{Xu}. The authors decouple the obstacle problem and SPDE components of the problem. This allows them to prove existence via a two-step Picard iteration, as well as uniqueness. 

Similarly, there has been much recent work on moving boundary problems for SPDEs. In \cite{KK}, existence and uniqueness for solutions to a Stefan problem for an SPDE driven by spatially coloured noise is proved. The corresponding problem in the case when the SPDE is driven by space-time white noise was then studied in \cite{Zheng} under the condition that the volatility vanishes quickly enough at the moving interface. More recent work on such problems include the models in \cite{Muller} and in \cite{KM}. In these papers the focus is on essentially the same equations as \eqref{mmovingboundaryoriginal} but without reflection and with coloured noise. Different boundary conditions are imposed at the interface in the two papers and in particular they are able to include a Brownian motion in the dynamics for the motion of the boundary. When thinking of the equations in \cite{Muller, KM} as models for the 
limit order book, the incorporation of a Brownian term ensures that the resulting price process is a semi-martingale.

\subsection{Main Results and Contributions}

In this paper we combine aspects of the models discussed above and consider the system of two reflected SPDEs sharing a moving boundary, 
(\ref{mmovingboundaryoriginal}). We examine the problem in two cases- firstly when the spatial domain is restricted to a fixed distance from the 
moving boundary and secondly when the spatial domain is infinite in both directions. Existence and uniqueness for the system is proved 
in both of these cases. Our approach is similar to that of \cite{Xu}. As in \cite{Xu}, we decouple the problem into studying a deterministic obstacle 
problem and applying SPDE estimates. The non-Lipschitz term created by the moving boundary is controlled by a suitably truncated version of the 
problem, for which existence and uniqueness is proved by a Picard argument. Consistency among the solutions of the truncated problems allows us 
to piece these together to obtain a solution to our original problem which exists until some blow-up time. 

In the case when the spatial domain is infinite, our analysis extends the known uniqueness theory for reflected SPDEs on infinite spatial domains. 
In \cite{Otobe}, uniqueness for the reflected stochastic heat equation on $\mathbb{R}$ is proved in the case when $\sigma \equiv 1$. We obtain 
uniqueness for a class of volatility coefficients which are allowed to depend on space and the solution itself at that particular point in space-time. The 
main condition here is that the coefficient $\sigma$ is Lipschitz in its second argument, with a Lipschitz constant which decays exponentially fast in 
the spatial variable. 

The local H\"{o}lder continuity of the solutions to our equations is also established, in both the case when the spatial domain is finite and when it is 
infinite. As one might expect, solutions can be shown to be up to $\frac{1}{4}$-H\"{o}lder continuous in time and up to $\frac{1}{2}$- H\"{o}lder 
continuous in space. In the case of the infinite spatial domain, this is a new result even for static reflected SPDEs. We argue by suitably adapting the 
proof in \cite{Dalang2}, in which the corresponding result was proved for solutions to static reflected SPDEs on compact spatial domains. As a corollary, 
we can also show that the derivative of the boundary is up to $\frac{1}{4}$-H\"{o}lder continuous in time, with this regularity inherited from the solution 
and a Lipschitz condition on the function $h$ in (\ref{boundary equation}).

An outline of the paper is as follows. In Section~2 we will discuss the deterministic obstacle problem, a key ingredient in working with reflected processes. 
In Section~3 we look at the case where the SPDEs are supported on finite intervals in the moving frame. The case of semi-infinite intervals in the moving 
frame is the topic of Section~4. The heat kernel estimates necessary for obtaining the SPDE estimates as well as the proofs for Section~2 are in the appendix.

\subsection{An Application: Limit Order Books}

The majority of modern trading takes place in limit order markets. In a limit order market, all traders are able to place orders of three types. Limit 
orders are offers to buy/sell which do not lead to an immediate transaction; they only result in a transaction when they are matched with incoming 
market orders. Market orders are offers to buy/sell the asset which match with an existing limit order and so result in immediate transaction. Finally, 
traders are able to cancel limit orders which they previously placed. The order book itself at a given time is simply the record of unexecuted, 
uncancelled limit orders at that time. There has been much interest in trying to model the dynamics of the book, particularly in a high frequency setting. 

As in \cite{Muller} and \cite{KM}, we can think of our equations (\ref{mmovingboundaryoriginal}) as being a model for the limit order book. In this context, we would think of the spatial variable as representing the price or the log-price. The random fields $u^1(t,x)$ and $u^2(t,x)$ would then be the density of limit orders existing at price $x$ and time $t$ on the bid and ask sides of the book respectively, and we can choose the function $h$ to represent an approximation of the local imbalance at the mid price. The volatility terms in our equations can naturally be thought of as the presence of high frequency trading in the model, which together with the drift term models the arrivals and cancellations of limit orders. The price process is then driven by the relative pressure of the existing orders on the two sides of the book. A potential advantage of our model over similar models in the literature is the presence of the reflection measure, which ensures that order volumes remain positive without the need for a multiplicative term in front of the noise. 

\begin{figure}\label{Price}
\begin{center}
\includegraphics[scale=0.6]{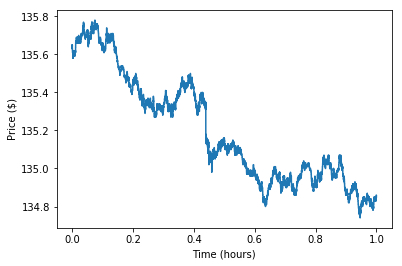} 
\caption{Bid price process for the SPDR Trust Series I on June 21 2012 between 09:30:00.000 and 10:30:00.000 EST}
\end{center}
\end{figure}

\begin{figure}\label{Price2}
\begin{center}
\includegraphics[scale=0.6]{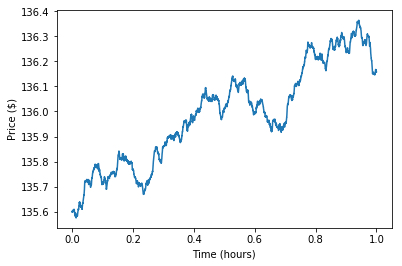} 
\caption{Simulated Price Process}
\end{center}
\end{figure}

Figures 1 and 2 are included here for illustrative purposes. We make use of data provided by the LOBSTER (Limit Order Book System, The Efficient Reconstructor) database. Figure 1 shows the evolution of the bid price process for the SPDR Trust Series I on June 21 2012 between 09:30:00.000 and 10:30:00.000 EST. Figure 2 displays a simulated price process obtained from our moving boundary model on the compact spatial domain $[0,1]$. Note that space here is on the scale of dollars i.e. a spatial interval of size $1$ represents a price interval of size $\$ 1$.  We use a simple forward Euler finite difference scheme in order to produce the simulation. Using the data of incoming market orders, limit orders and cancellations of the SPDR Trust Series I at the different relative prices over the same time period as in Figure 1, we fit the drift and volatility coefficients (which we assume to depend on the relative price only) of our SPDEs. When fitting these, we assume that these parameters are symmetric for the two sides of the order book, which is the case for this particular dataset up to a small error. Note that, for smooth functions $k$ and large $\lambda >0$, the functional $g_{\lambda}$ given by
\begin{equation}
g_{\lambda}(k):= \int_0^1 \lambda^2e^{-\lambda x} k(x) \; \textrm{d}x
\end{equation}
places most emphasis on the mass of $k$ near zero. It is also the case that $g_{\lambda}(k) \rightarrow k^{\prime}(0)$ as $\lambda \rightarrow \infty$. Recall that the functional $h$ in our equations determines the derivative of the boundary as a function of the two sides of the book, as in (\ref{boundary equation}). For our simulation, we make the choice
\begin{equation}
h(u^1,u^2)=  \alpha g_{\lambda}(u^1-u^2),
\end{equation}
with $\alpha =5$ and $\lambda = 100$. The boundary movement is therefore driven primarily by the local imbalance of offers to buy and sell 
at the mid, and approximates a Stefan condition. We also remark here that the Laplacian terms in our simulation were scaled down by a factor 
of $0.2$ in order to ensure that the order book profiles obtained by the simulation are on the correct scale.  

We note that the boundary motion for our equations can be shown to be $C^{1,\alpha}$ for $\alpha < 1/4$. However, by choosing $h$ to approximate 
the derivative at the boundary and looking at the price process over sufficiently long timescales, we can generate price processes that are rough at the 
appropriate scale as can be seen in Figure 2. 

\section{A Deterministic Parabolic Obstacle Problem}

In this section we will define and state some simple results for deterministic parabolic obstacle problems on both the compact spatial interval $[0,1]$ and the infinite interval $[0,\infty)$. The intuition for these equations is that the solutions solve the heat equation with a constraint that they must lie above some predetermined continuous function of time and space. It is important to note that both the obstacle and the solutions will be continuous functions here. In addition, being able to control differences in the solution by differences in the obstacles will be key in allowing us to introduce the reflection component in our SPDEs later. 

\subsection{The Deterministic Obstacle Problem on $[0,1]$}

The obstacle problem on the compact interval $[0,1]$ was originally discussed in \cite{NP}, in which the proofs can be found. 

\begin{defn}
Let $v \in C([0,T] ; C_0((0,1)))$ with $v(0,\cdot) \leq 0$. We say that the pair $(z, \eta)$ satisfies the heat equation with obstacle $v$ if:

\begin{enumerate}[(i)]
\item $z \in C([0,T] \times [0,1])$, $z(t,0)=z(t,1)=0$, $z(0,x) \equiv 0$ and $z \geq v$.
\item $\eta$ is a measure on $(0,1) \times [0,T]$.
\item $z$ weakly solves the PDE 
\begin{equation}
\frac{\partial z}{\partial t}= \frac{\partial^2 z}{\partial x^2} + \eta
\end{equation}
That is, for every $t \in [0,T]$ and every $\phi \in C^2((0,1)) \cap C_0((0,1))$, $$\int_0^1 z(t,x) \phi(x) \textrm{d}x= \int_0^t \int_0^1 z(s,x)\phi^{\prime \prime}(x) \textrm{d}x\textrm{d}s + \int_0^t \int_0^1 \phi(x) \eta(\textrm{d}x,\textrm{d}s).$$

\item $\int_0^t \int_0^1 (z(s,x)-v(s,x)) \; \eta(\textrm{d}x,\textrm{d}s)=0$.

\end{enumerate}
\end{defn}

The following result from \cite{NP} gives us existence and uniqueness for solutions to this problem. It is also proved that we can bound the difference between two solutions in the $L^{\infty}$-norm by the difference in the $L^{\infty}$-norm of the obstacles. This will be very helpful when proving estimates later, as it will allow us to control our reflected SPDE by an unreflected SPDE.

\begin{thm}[\cite{NP}, Theorem 1.4]
For every $v \in C([0,T] ;  C_0((0,1)))$ with $v(0, \cdot) \leq 0$, there exists a unique solution to the above obstacle problem. In addition, if $(z_1, \eta_1)$ and $(z_2, \eta_2)$ solve the obstacle problems with obstacles $v_1$ and $v_2$ respectively, then for $t \in [0,T]$, we have the estimate 
\begin{equation}\label{Obstacle[0,1]}
\|z_1-z_2\|_{\infty,t} \leq \|v_1-v_2\|_{\infty,t}\end{equation} where $\|.\|_{\infty,t}$ is defined for $u \in C([0,T] \times [0,1])$ by $$\|u\|_{\infty,t}:=\sup\limits_{s \in [0,t]} \sup\limits_{x \in [0,1]} |u(s,x)|.$$
\end{thm}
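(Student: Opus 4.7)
The plan is to proceed via penalization, which is the classical route for deterministic obstacle problems of this kind. I would approximate the problem by the semilinear PDE
\begin{equation*}
\partial_t z^\epsilon = \partial_{xx} z^\epsilon + \tfrac{1}{\epsilon}(v - z^\epsilon)_+,
\end{equation*}
with zero initial data and Dirichlet conditions at $x=0,1$. Since the right-hand side is Lipschitz in $z^\epsilon$, the mild formulation via the Dirichlet heat kernel yields a unique continuous solution by Picard iteration for each fixed $\epsilon > 0$. The penalty term pushes $z^\epsilon$ above $v$ as $\epsilon \to 0$; heat-kernel bounds and the continuity of $v$ give uniform equicontinuity of $\{z^\epsilon\}$ together with a uniform bound on $\epsilon^{-1}\int_0^T \int_0^1 \phi \,(v - z^\epsilon)_+ \,dx\,ds$ for smooth cut-offs $\phi \in C^2_0((0,1))$. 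Extracting a subsequential limit $z \in C([0,T]\times[0,1])$ and a weak limit $\eta$ of the measures $\epsilon^{-1}(v-z^\epsilon)_+ \,dx\,ds$, properties (i)--(iii) would be verified directly; for the complementarity (iv) one exploits the identity $\int (z^\epsilon - v)\,d\eta^\epsilon = -\epsilon^{-1}\int (v-z^\epsilon)_+^2\,dx\,ds \le 0$ together with the uniform convergence $(v-z^\epsilon)_+ \to 0$ (giving $z \geq v$).

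The contraction estimate \eqref{Obstacle[0,1]} is the heart of the matter, and I would prove it at the penalized level. Let $z_i^\epsilon$ solve the penalized problem with obstacle $v_i$ for $i=1,2$, set $w^\epsilon := z_1^\epsilon - z_2^\epsilon$, and $M := \|v_1 - v_2\|_{\infty,t}$. The key algebraic observation is that on the set $\{w^\epsilon > M\}$ one has $v_1 - z_1^\epsilon < v_2 - z_2^\epsilon$, so $(v_1-z_1^\epsilon)_+ \le (v_2-z_2^\epsilon)_+$ and the forcing difference
\begin{equation*}
\tfrac{1}{\epsilon}\bigl[(v_1 - z_1^\epsilon)_+ - (v_2 - z_2^\epsilon)_+\bigr]
\end{equation*}
is non-positive there. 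I would test the PDE for $w^\epsilon$ against $\psi_\delta'(w^\epsilon - M)$, where $\psi_\delta$ is a smooth convex approximation of $r \mapsto r_+$. Integration by parts in the Laplacian contributes $-\int \psi_\delta''(w^\epsilon - M)(\partial_x w^\epsilon)^2\,dx \le 0$, with vanishing boundary terms since $w^\epsilon = 0 \le M$ at $x=0,1$; the penalization term is non-positive by the observation above. This yields $\tfrac{d}{dt}\int_0^1 \psi_\delta(w^\epsilon - M)\,dx \le 0$, and since the integrand is zero at $t=0$, passing $\delta \to 0$ gives $(w^\epsilon - M)_+ \equiv 0$. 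Sending $\epsilon \to 0$ and applying the symmetric bound produces the claimed estimate.

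The main obstacle will be the technical passage to the limit. On the one hand, $z_i^\epsilon$ is only regular in a weak/mild sense when $v$ is merely continuous, so the test-function calculation above must either be justified distributionally or carried out after first regularizing the obstacles $v_i$ and then passing to the limit in $v$. On the other hand, identifying the reflection measure $\eta$ and verifying (iv) requires a careful uniform mass bound on $\eta^\epsilon$; I would obtain this by testing against the first Dirichlet eigenfunction (which vanishes at the boundary at the right rate) to exploit the decay of the Dirichlet heat kernel near $\{0,1\}$. Uniqueness of solutions to the obstacle problem then follows immediately from the contraction estimate applied with $v_1 = v_2$.
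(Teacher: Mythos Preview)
The paper does not prove this result; it is quoted verbatim from Nualart--Pardoux \cite{NP}, Theorem~1.4, and the same penalization template is reproduced in this paper's appendix for the $[0,\infty)$ analogue (Theorem~\ref{Obstacle INfinite}). Your scheme is precisely the NP approach: they penalize with $\epsilon^{-1}\arctan\bigl(((z^\epsilon-v)\wedge 0)^2\bigr)$ rather than $\epsilon^{-1}(v-z^\epsilon)_+$, but the mechanics (monotonicity in $\epsilon$, contraction at the penalized level via a maximum-principle/testing argument on $(z_1^\epsilon-z_2^\epsilon-M)_+$, passage to the limit) are the same, and your sign observation on the penalty difference where $w^\epsilon>M$ is exactly their step.

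One refinement worth noting: NP (and this paper's appendix, Proposition~\ref{infiniteobstacleexist}) do not obtain equicontinuity of $\{z^\epsilon\}$ directly from heat-kernel bounds on the forcing, since the penalty blows up in $L^\infty$. Instead they first establish the penalized contraction, then approximate $v$ by smooth $v_n$, control $\partial_t z_n^\epsilon$ and $\partial_x z_n^\epsilon$ uniformly in $\epsilon$ by differentiating the equation and applying a maximum principle (Lemma~\ref{PDE BOund}), and transfer the modulus of continuity to $z^\epsilon$ via $\|z^\epsilon-z_n^\epsilon\|_\infty\le\|v-v_n\|_\infty$. Your proposal already anticipates needing to regularize $v$, so this is only a matter of reordering the steps.
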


\subsection{The Deterministic Obstacle Problem on $[0,\infty)$}

Before discussing the obstacle problem in this section, we first introduce the relevant function spaces which we will be working on.

\begin{defn}
For $r \in \mathbb{R}$, we say that $u:[0,\infty) \rightarrow \mathbb{R}$ is in the space $\mathscr{L}_r$ if
\begin{equation}
\|u\|_{\mathscr{L}_r}:= \sup\limits_{x \geq 0} e^{-rx}|u(x)| < \infty.
\end{equation}
We say that $u \in \mathscr{C}_r$ if $u \in \mathscr{L}_r$ and $u$ is continuous. $\mathscr{C}_r$ is equipped with the same norm as $\mathscr{L}_r$
\end{defn}

\begin{defn}
We say that $u:[0,T] \times [0,\infty) \rightarrow \mathbb{R}$ is in the space $\mathscr{C}_r^T$ if $u$ is continuous and 
\begin{equation}
\sup\limits_{t \in [0,T]} \sup\limits_{x \geq 0} e^{-rx}|u(t,x)|
\end{equation}
\end{defn}

We are now in position to define the obstacle problem in this setting.

\begin{defn}
Fix some $r \in \mathbb{R}$. Let $v \in \mathscr{C}_r^T$ such that $v(t,0)=0$ for every $t \in [0,T]$. We say that the pair $(z, \eta)$ satisfies the heat equation with obstacle $v$ and exponential growth $r$ on $[0,\infty)$ if:

\begin{enumerate}[(i)]
\item $z \in \mathscr{C}_r^T$, $z(t,0)=0$, $z(0,x)=0$ and $z \geq v$.
\item $\eta$ is a measure on $(0,\infty) \times [0,T]$.
\item $z$ weakly solves the PDE
\begin{equation}
\frac{\partial z}{\partial t}= \frac{\partial^2 z}{\partial x^2} + \eta.
\end{equation}
That is, for every $t \in [0,T]$ and every $\phi \in C_c^{\infty}([0,\infty))$ with $\phi(0)=0$, $$\int_0^{\infty} z(t,x) \phi(x) \textrm{d}x= \int_0^t \int_0^{\infty} z(s,x)\phi^{\prime \prime}(x) \textrm{d}x\textrm{d}s + \int_0^t \int_0^{\infty} \phi(x) \eta(\textrm{d}x,\textrm{d}s).$$
\item $\int_0^t \int_0^{\infty} (z(s,x)- v(s,x)) \; \eta(\textrm{d}x,\textrm{d}s)=0$.
\end{enumerate}
\end{defn}

We note that the deterministic obstacle problem on the spatial domain $\mathbb{R}$ is considered in \cite{Otobe}. Here, we pose the problem in $\mathscr{C}_r^T$ for any $r \in \mathbb{R}$, and we work on the spatial domain $[0,\infty)$ rather than $\mathbb{R}$. A proof of the following result is provided in the appendix.

\begin{thm}\label{Obstacle INfinite}
For every $r \in \mathbb{R}$ and every $v \in \mathscr{C}_r^T$ such that $v(t,0)=0$, there exists a unique solution $(z, \eta)$ to the heat equation on $[0, \infty)$ with Dirichlet condition and obstacle $v$. Furthermore, if $v_1, v_2 \in \mathscr{C}_r^T$, we have that 
\begin{equation}
\|z_1-z_2\|_{\mathscr{C}_r^T} \leq C_{r,T} \|v_1-v_2\|_{\mathscr{C}_r^T},
\end{equation}
where $z_i$ is the solution to the obstacle problem corresponding to $v_i$.
\end{thm}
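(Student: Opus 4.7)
The plan is to adapt the Nualart--Pardoux penalisation method to the half-line, working throughout in the exponentially weighted space $\mathscr{C}_r^T$. The main new analytic ingredient, beyond the compact case, is the Dirichlet heat kernel on $[0,\infty)$,
\[
G_t^D(x,y) = \frac{1}{\sqrt{4\pi t}}\bigl(e^{-(x-y)^2/(4t)} - e^{-(x+y)^2/(4t)}\bigr),
\]
together with the elementary estimate $\sup_{t \in [0,T],\, x \geq 0} e^{-rx}\int_0^\infty G_t^D(x,y)e^{ry}\,dy \leq e^{r^2 T}$, which says that the Dirichlet heat semigroup acts boundedly on $\mathscr{C}_r$ uniformly on $[0,T]$. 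The explicit supersolution $g(t,x) := e^{rx + r^2 t}$ of the heat equation will be the key device for carrying the exponential weight through the comparison arguments needed for the Lipschitz estimate.

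For existence, I would introduce for each $\epsilon > 0$ the penalised PDE
\[
\partial_t z^\epsilon = \partial_{xx}z^\epsilon + \tfrac{1}{\epsilon}(v-z^\epsilon)_+, \qquad z^\epsilon(t,0)=0,\quad z^\epsilon(0,\cdot)=0,
\]
and rewrite it via Duhamel as a fixed point $z^\epsilon = F^\epsilon(z^\epsilon)$. The Lipschitz property of $(v-\cdot)_+$ combined with the heat-kernel bound yields a contraction on short time intervals in $\mathscr{C}_r^T$, giving a unique $z^\epsilon \in \mathscr{C}_r^T$ globally by iteration. The parabolic comparison principle shows that $z^\epsilon$ is monotone nonincreasing in $\epsilon$ and uniformly bounded in $\mathscr{C}_r^T$; setting $z := \lim_{\epsilon \downarrow 0} z^\epsilon$ and verifying equicontinuity on compact subsets of $[0,T]\times[0,\infty)$ via standard parabolic regularity, one obtains $z \in \mathscr{C}_r^T$ as a pointwise monotone limit that is continuous by locally uniform convergence. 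Simultaneously, the measures $\eta^\epsilon(dy, ds) := \tfrac{1}{\epsilon}(v-z^\epsilon)_+(s,y)\,dy\,ds$ have mass uniformly bounded on any compact $K \subset (0,\infty)\times[0,T]$ (obtained by testing the PDE for $z^\epsilon$ against a smooth bump and using the uniform bound on $z^\epsilon$), so a diagonal extraction yields a Radon measure $\eta$. Properties (i)--(iv) then pass to the limit essentially as in the compact case: $z \geq v$ because otherwise the penalty term would blow up, and the complementarity condition follows from the pointwise inequality $(v-z^\epsilon)_+(z^\epsilon - v) \leq 0$.

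For uniqueness and the Lipschitz estimate, which I expect to be the main obstacle, I would use a minimality characterisation: any $\tilde{z} \in \mathscr{C}_r^T$ with $\tilde{z}(0,\cdot) \geq 0$, $\tilde{z}(t,0) \geq 0$, $\tilde{z} \geq v$, and $\partial_t \tilde{z} - \partial_{xx}\tilde{z} \geq 0$ in the distributional sense satisfies $\tilde{z} \geq z$. This is established by comparing $\tilde{z}$ with the penalised approximations $z^\epsilon$ via the maximum principle and sending $\epsilon \downarrow 0$; compatibility with the weighted space reduces to the semigroup bound above. Given obstacles $v_1, v_2 \in \mathscr{C}_r^T$ with solutions $(z_i, \eta_i)$, set $M := \|v_1 - v_2\|_{\mathscr{C}_r^T}$ and take $\tilde{z} := z_2 + M g$ with $g(t,x) = e^{rx + r^2 t}$. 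Then $\partial_t \tilde{z} - \partial_{xx}\tilde{z} = \eta_2 \geq 0$, $\tilde{z}(t,0), \tilde{z}(0,\cdot) \geq 0$, and pointwise
\[
\tilde{z} - v_1 \;\geq\; (v_2 - v_1) + M e^{rx + r^2 t} \;\geq\; -M e^{rx} + M e^{rx} \;=\; 0,
\]
so minimality applied to $z_1$ gives $z_1 \leq z_2 + Me^{rx + r^2 t}$. Multiplying by $e^{-rx}$, taking supremum, and symmetrising yields $\|z_1 - z_2\|_{\mathscr{C}_r^T} \leq e^{r^2 T}\|v_1 - v_2\|_{\mathscr{C}_r^T}$, which both proves uniqueness (take $v_1 = v_2$) and establishes the Lipschitz estimate with $C_{r,T} = e^{r^2 T}$. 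The principal technical difficulty is in making the minimality comparison rigorous at infinite spatial extent, which ultimately hinges on the weighted semigroup bound invoked at the outset.
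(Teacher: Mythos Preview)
Your existence argument via penalisation and monotone limit is essentially the paper's approach (Proposition~B.3 in the appendix), with a different but equivalent choice of penalty; the paper uses $\tfrac{1}{\epsilon}\arctan(((z^\epsilon - v)\wedge 0)^2)$ rather than $\tfrac{1}{\epsilon}(v - z^\epsilon)_+$, which makes no structural difference. Likewise, your Lipschitz estimate via the explicit heat supersolution $g(t,x) = e^{rx + r^2 t}$ is precisely the device the paper uses, though the paper applies it at the penalised level (Proposition~B.2: one sets $w(t,x) = e^{rx + r^2 t}\|v_1 - v_2\|_{\mathscr{C}_r^t}$, tests $e^{-(r+\delta)x}(z_1^\epsilon - z_2^\epsilon - w)^+$ against itself, and concludes by Gronwall) and then passes to the limit, whereas you invoke minimality directly at the solution level. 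Both routes give $C_{r,T} = e^{r^2 T}$.

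There is, however, a genuine gap in your uniqueness argument. The minimality characterisation you establish --- by comparing a supersolution $\tilde{z}$ with the penalised approximants $z^\epsilon$ --- applies only to the \emph{penalisation limit}, not to an arbitrary solution of the obstacle problem. Consequently ``take $v_1 = v_2$'' only shows that two penalisation limits with the same obstacle coincide (trivial); it does not show that an arbitrary solution $(z', \eta')$ equals the penalisation limit $z$. You do get one inequality for free: $z'$ is a supersolution above $v$, so minimality of $z$ gives $z \leq z'$. The reverse inequality $z' \leq z$ requires showing that $z'$ is \emph{itself} minimal, and this hinges on the complementarity condition: where $z' > z \geq v$ one has $z' > v$, so $\eta'$ vanishes there, and a weighted maximum-principle argument on $(z' - z)^+$ should close --- but you do not mention this, and making it rigorous in $\mathscr{C}_r^T$ with the measure-valued source $\eta' - \eta$ is not routine. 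The paper takes a quite different route for uniqueness (Lemma~B.4 and Proposition~B.6): via a mollification argument it justifies the formal testing of $\psi = z_1 - z_2$ against $\psi\phi^2$ for compactly supported $\phi$, obtaining $\int_0^\infty \psi^2 \phi^2\,dx \leq \int_0^t\int_0^\infty \psi^2 (\phi^2)''\,dx\,ds$, then extends by approximation to $\phi(x) = e^{-(r+\delta)x}$ and concludes by Gronwall. This weighted $L^2$ energy method is more technical than what you propose, but handles arbitrary pairs of solutions without any appeal to minimality.
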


\section{The Moving Boundary Problem on Finite Intervals in the Relative Frame}

We are interested in the following reflected moving boundary problem:

\begin{equation}\label{movingboundaryoriginal}\begin{split}
& \frac{\partial u^1}{\partial t}= \Delta u^1 + f_1(p(t)-x, u^1(t,x))+ \sigma_1(p(t)-x,u^1(t,x))\dot{W} + \eta^1 \\ 
& \frac{\partial u^2}{\partial t}= \Delta u^2 + f_2(x-p(t),u^2(t,x))+ \sigma_2(x-p(t),u^2(t,x))\dot{W} + \eta^2,
\end{split}
\end{equation}
where $u^1$ and $u^2$ satisfy Dirichlet boundary conditions enforcing that they are zero at $p(t)$, with the point $p(t)$ evolving according to the equation 
\begin{equation*}
p^{\prime}(t)= h(u^1(t,p(t)- \cdot), u^2(t,p(t)+ \cdot)).
\end{equation*}
Here, $\dot{W}$ is a space-time white noise and $h: C([0,1])^2 \mapsto \mathbb{R}$, satisfies certain conditions which we will introduce later. $(\eta^1,\eta^2)$ are reflection measures for the functions $u^1$ and $u^2$ respectively, keeping the profiles positive and satifying the conditions 
\begin{enumerate}[(i)]
\item $\int_0^{\infty} \int_{\mathbb{R}} u^1(t,x) \;  \eta^1(\textrm{d}t,\textrm{d}x)=0,$ and 
\item $\int_0^{\infty} \int_{\mathbb{R}} u^2(t,x) \;  \eta^2(\textrm{d}t,\textrm{d}x)=0.$
\end{enumerate} 
In this section, we consider the case when the functions $u^1$ and $u^2$ are supported in the sets $\left\{ (t,x)\in [0,\infty) \times \mathbb{R} \; | \; x \in [p(t)-1,p(t)] \right\}$ and $\left\{ (t,x)\in [0,\infty) \times \mathbb{R} \; | \; x \in [p(t),p(t)+1] \right\}$ respectively.

\subsection{Formulation of the Moving Boundary Problem}

We would like to formalise what we mean by (\ref{movingboundaryoriginal}) in the compact case. Before doing so, we define what we mean for a space-time white noise to respect a given filtration. This will be useful in some of the measurability arguments which follow.

\begin{defn}
Let $(\Omega, \mathscr{F}, \mathscr{F}_t, \mathbb{P})$ be a complete filtered probability space. Suppose that $\dot{W}$ is a space-time white noise defined on this space. Define for $A \in \mathscr{B}(\mathbb{R})$, $$W_t(A):= \dot{W}([0,t] \times A).$$ We say that $\dot{W}$ respects the filtration $\mathscr{F}_t$ if $(W_t(A))_{t \geq 0, A \in \mathscr{B}(\mathbb{R})}$ is an $\mathscr{F}_t$- martingale measure i.e.  if for every $A \in \mathscr{B}(\mathbb{R})$, $(W_t(A))_{t \geq 0}$ is an $\mathscr{F}_t$-martingale.
\end{defn}

Let $(\Omega, \mathscr{F}, \mathbb{P})$ be a complete filtered probability space, and $\dot{W}$ space-time white noise on $\mathbb{R}^+ \times \mathbb{R}$. Let $\mathscr{F}_t$ be the filtration generated by the white noise, so that $\mathscr{F}_t = \sigma(\left\{ W(s,x) \; | \; x \in \mathbb{R}, s \leq t \right\} ).$ Suppose that $(u^1,\eta^1, u^2, \eta^2, p)$ is an $\mathscr{F}_t$-adapted process solving (\ref{movingboundaryoriginal}). Then $p:\mathbb{R}^+ \times \Omega \mapsto \mathbb{R}$ is a $\mathscr{F}_t$-adapted process such that the paths of $p(t)$ are almost surely $C^1$ (note that, in particular, $p$ is $\mathscr{F}_t$-predictable).  Let $\varphi \in C_c^{\infty}([0,\infty) \times (0,1))$, and define the function $\phi$ by setting $\phi(t,x)= \varphi(t, p(t)+x)$. By multiplying the equation for $u^1$ in (\ref{movingboundaryoriginal}) by such a $\phi$ and integrating over space and time, interpreting the derivatives in the usual weak sense, we obtain the expression
\begin{equation*}\begin{split}
\int_{\mathbb{R}} u^1(t,x) \phi(t,x) \textrm{d}x= & \int_{\mathbb{R}} u^1(0,x) \phi(0,x) \textrm{d}x +  \int_0^t \int_{\mathbb{R}} u^1(s,x) \frac{\partial \phi}{\partial t}(s,x) \textrm{d}x\textrm{d}s \\ & + \int_0^t \int_{\mathbb{R}}  u^1(s,x) \frac{\partial^2 \phi}{\partial x^2}(s,x) \textrm{d}x\textrm{d}s  + \int_0^t \int_{\mathbb{R}}  f_1(p(s)-x,u^1(t,x)) \phi(s,x) \textrm{d}x\textrm{d}s \\ & + \int_0^t \int_{\mathbb{R}} \sigma_1(p(s)-x,u^1(t,x)) \phi(s,x)W(\textrm{d}x,\textrm{d}s)\\  & + \int_0^t \int_{\mathbb{R}} \phi(s,x) \eta^1(\textrm{d}s,\textrm{d}x). 
\end{split}
\end{equation*}
We now introduce a change in the spatial variable in order to associate our problem with a fixed boundary problem. Setting $v^1(t,x)= u^1(t, p(t)-x)$, the above equation becomes
\begin{equation*}
\begin{split}
\int_0^1 v^1(t,x) \phi(t,p(t)-x) \textrm{d}x= & \int_0^1 v^1(0,x) \phi(0,p(0)-x) \textrm{d}x +  \int_0^t \int_0^1 v^1(s,x) \frac{\partial \phi}{\partial t}(s,p(s)-x) \textrm{d}x\textrm{d}s \\ &+ \int_0^t \int_0^1  v^1(s,x) \frac{\partial^2 \phi}{\partial x^2}(s,x) \textrm{d}x\textrm{d}s
+ \int_0^t \int_0^1 f_1(x,v^1(s,x)) \phi(s,p(s)-x) \textrm{d}x\textrm{d}s \\ &+ \int_0^t \int_0^1 \sigma_1(x,v^1(s,x)) \phi(s,p(s)-x)W_p(\textrm{d}x,\textrm{d}s)\\ &+ \int_0^t \int_0^1  \phi(s,p(s)-x) \eta_p^1(\textrm{d}x,\textrm{d}s). 
\end{split}
\end{equation*}
Here, $\dot{W}_p$ and $\eta_p^1$ are obtained by from $W$ and $\eta$ by shifting by $p(t)$. That is, for $t \in \mathbb{R}^+$ and $A \in \mathscr{B}( \mathbb{R})$,
\begin{equation}
\dot{W}_p([0,t] \times A)= \int_0^t \int_{A+p(s)} W(\textrm{d}s,\textrm{d}y), \; \; \; \; \; \; \eta_p^1([0,t] \times A)= \int_0^t \int_{A+p(s)} \eta^1(\textrm{d}y,\textrm{d}s), 
\end{equation}
Note that, since the process $p(t)$ is $\mathscr{F}_t$-predictable, $\dot{W}_p$ is then also a space time white noise which respects the filtration $\mathscr{F}_t$. Also, $\eta_p^1$ is a reflection measure for $v$, so that
\begin{equation}
\int_0^T \int_0^1 v^1(t,x) \;  \eta_p^1(\textrm{d}t,\textrm{d}x)=0.
\end{equation} 
We can calculate 
\begin{equation}\label{changeofvar}
\frac{\partial \phi}{\partial t}(s,x)= \frac{\partial \varphi}{\partial t}(t,p(t)+x) + p^{\prime}(t)\frac{\partial \varphi}{\partial x}(t,p(t)+x).\end{equation}
It therefore follows that 
\begin{equation*}
\begin{split}
\int_0^1 v^1(t,x) \varphi(t,x) \textrm{d}x= & \int_0^1 v^1(0,x) \varphi(0,x) \textrm{d}x +  \int_0^t \int_0^1 v^1(s,x) \frac{\partial \varphi}{\partial t}(s,x) \textrm{d}x\textrm{d}s \\ &+ \int_0^t \int_0^1 v^1(s,x)p^{\prime}(s) \frac{\partial \varphi}{\partial x}(s,x) \textrm{d}x\textrm{d}s+ \int_0^1 \int_0^t v^1(s,x) \frac{\partial^2 \varphi}{\partial x^2}(s,x) \textrm{d}x\textrm{d}s
\\ & + \int_0^1 \int_0^t f_1(x,v^1(s,x)) \varphi(s,x) \textrm{d}x\textrm{d}s+ \int_0^1 \int_0^t \sigma_1(x,v^1(s,x)) \varphi(s,x)W_p(\textrm{d}x,\textrm{d}s) \\ &+ \int_0^1 \int_0^t \varphi(s,x) \eta_p^1(ds,dx).
\end{split}
\end{equation*}
We can perform similar manipulations to obtain a weak form for $v^2(t,x):= u^2(t,p(t)+x)$. This yields that for test functions $\varphi \in C_c^{\infty}([0,\infty) \times (0,1))$, we should have that 
\begin{equation*}
\begin{split}
\int_0^1 v^2(t,x) \varphi(t,x) \textrm{d}x= & \int_0^1 v^2(0,x) \varphi(0,x) \textrm{d}x +  \int_0^t \int_0^1 v^2(s,x) \frac{\partial \varphi}{\partial t}(s,x) \textrm{d}x\textrm{d}s \\ & - \int_0^t \int_0^1 v^2(s,x)p'(s) \frac{\partial \varphi}{\partial x}(s,x) \textrm{d}x\textrm{d}s \\ & + \int_0^1 \int_0^t v^2(s,x) \frac{\partial^2 \varphi}{\partial x^2}(s,x) \textrm{d}x\textrm{d}s
+ \int_0^1 \int_0^t f_2(x,v^2(s,x)) \varphi(s,x) \textrm{d}x\textrm{d}s \\ & + \int_0^1 \int_0^t \sigma_2(x,v^2(s,x)) \varphi(s,x)W_p^-(\textrm{d}x,\textrm{d}s)+ \int_0^1 \int_0^t \varphi(s,x) \eta_p^2(ds,dx),
\end{split}
\end{equation*}
where $\dot{W}_p^-$ is given by
 \begin{equation*}
\dot{W}_p^-([0,t] \times A)= \dot{W}_p([0,t] \times (-A)).
\end{equation*}
Note also that, since $(u^1,\eta^1,u^2,\eta^2,p)$ is $\mathscr{F}_t$-adapted, we know that $(v^1,\eta^1_p, v^2, \eta^1_p, p)$ is also $\mathscr{F}_t$-adapted. 

\begin{rem}
By noting that for $i=1,2$, $\dot{W}^i_p$ respect the filtration $\mathscr{F}_t$, we ensure that our solutions cannot ``see the future'' of the space-changed driving noises $\dot{W}_p^i$. It of course makes sense intuitively that this should be the case, since they are measurable with respect to the filtration generated by $\dot{W}$. We would expect that the solution is in fact adapted to the filtration generated by the noises $\dot{W}_p^i$. We see this indirectly, when we later prove that in any filtered space with a space-time white noise which respects the filtration, there exists a unique solution to the problem. As we can choose to take the filtration to be generated by the noise, the unique solution in an enlarged space must be adapted to the noise.
\end{rem}

\begin{rem}
The above formulation would need to be adjusted if we were anticipating rough paths for $p(t)$. For example, if $p(t)$ were a semimartingale with a non-zero diffusion component, we would have to apply $It\hat{o}$'s formula for the change of variables (\ref{changeofvar}), which would change our weak form. 
\end{rem}

We now define what we mean by a solution to a particular class of reflected SPDEs. The preceding calculation will allow us to connect the solutions to these SPDEs to our moving boundary problem.

\begin{defn}

Let $(\Omega, \mathscr{F}, \mathscr{F}_t,\mathbb{P})$ be a complete filtered probability space. Let $\dot{W}$ be a space time white noise on this space which respects the filtration $\mathscr{F}_t$. Suppose that $\tilde{v}$ is a continuous $\mathscr{F}_t$-adapted process taking values in $C_0(0,1)$. Let $h: C_0(0,1) \times C_0(0,1)\rightarrow C_0(0,1)$ and $F: C_0(0,1) \rightarrow C_0(0,1)$ be Lipschitz functions. For the $\mathscr{F}_t$-stopping time $\tau$, we say that the pair $(v, \eta)$ is a local solution to the reflected SPDE
\begin{equation*}
\frac{\partial v}{\partial t}= \Delta v + h(v, \tilde{v}) \frac{\partial F(v)}{\partial x} + f(x, v) + \sigma(x,v) \dot{W} + \eta
\end{equation*}
with Dirichlet boundary conditions $v(t,0)=v(t,1)=0$ and initial data $v_0 \in C_0(0,1)^+$, until time $\tau$, if
\begin{enumerate}[(i)]
\item For every $x \in [0,1]$ and every $t \in [0,\infty)$, $v(t,x)$ is $\mathscr{F}_t$-measurable.
\item $v \geq 0$ almost surely.
\item $v \big|_{[0,\tau) \times [0,1]} \in C([0, \tau) \times [0,1])$ almost surely.
\item $v(t,x)= \infty$ for every $t \geq \tau$ almost surely.
\item $\eta$ is a measure on $(0,1) \times [0,\infty)$ such that
\begin{enumerate}
\item For every measurable map $\psi: [0,1] \times [0,\infty) \rightarrow \mathbb{R}$, 
\begin{equation}
\int_0^t \int_0^1 \psi(x,s) \; \eta(\textrm{d}x,\textrm{d}s)
\end{equation}
is $\mathscr{F}_t$-measurable.
\item $\int_0^{\infty} \int_0^1 v(t,x) \; \eta(\textrm{dx,dt})=0$.
\end{enumerate} 
\item There exists a localising sequence of stopping times $\tau_n \uparrow \tau$ almost surely, such that for every  $\varphi \in C^{1,2}_c([0,\infty) \times [0,1])$ such that $\varphi(s,0)=\varphi(s,1)=0$ for every $s \geq 0$, 
\begin{equation}\label{weak}
\begin{split}
\int_0^1 v(t \wedge \tau_n,x) \varphi(t\wedge \tau_n,x) \textrm{d}x= & \int_0^1 v(0,x) \varphi(0,x) \textrm{d}x+  \int_0^{t \wedge \tau_n} \int_0^1 v(s,x) \frac{\partial \varphi}{\partial t}(s,x)\textrm{d}x\textrm{d}s \\ & +  \int_0^{t \wedge \tau_n} \int_0^1  v(s,x) \frac{\partial^2 \varphi}{\partial x^2}(s,x) \textrm{d}x\textrm{d}s \\ & - \int_0^{t \wedge \tau_n} \int_0^1 F(v(s,\cdot))(x)h(v(s,\cdot),\tilde{v}(s,\cdot)) \frac{\partial \varphi}{\partial x}(s,x) \textrm{d}x\textrm{d}s\\ & + \int_0^{t \wedge \tau_n}\int_0^1 f(x,v(s,x)) \varphi(s,x) \textrm{d}x\textrm{d}s \\ & + \int_0^{t \wedge \tau_n} \int_0^1  \sigma(x,v(s,x)) \varphi(s,x)W(\textrm{d}x,\textrm{d}s) \\ & + \int_0^{t \wedge \tau_n} \int_0^1 \varphi(s,x) \; \eta(ds,dx).
\end{split}
\end{equation}
for every $t \geq 0$ almost surely.
\end{enumerate}
\end{defn}
We say that a local solution is \emph{maximal} if it cannot be extended to a solution on a larger stochastic interval. We say that a local solution is \emph{global} if we can take $\tau_n= \infty$ in (\ref{weak}).

\begin{rem}
Condition (iv) above is included for the purposes of discussing uniqueness only.
\end{rem}

Before stating the formal definition for our moving boundary problem we introduce some notation which will allow us to easily write down the profiles in suitable relativised coordinates. 
\begin{defn}\label{frameshift}
For $p_0 \in \mathbb{R}$, we define $\Theta_{p_0}^1: \mathbb{R} \rightarrow \mathbb{R}$  such that 
$$\Theta_{p_0}^1(x) = p_0 -x.$$
For a function $p: [0,\infty) \rightarrow \mathbb{R}$ we then define $\theta^1_p: [0,\infty) \times \mathbb{R} \rightarrow [0,\infty) \times \mathbb{R}$ such that 
$$\theta^1_p(t,x):= (t,\Theta_{p(t)}^1(x)).$$
We similarly define $\Theta^2_{p_0} : \mathbb{R} \rightarrow \mathbb{R}$ such that $$\Theta_{p_0}^2(x) = x- p_0,$$ and $\theta^2_p : [0,\infty) \times \mathbb{R} \rightarrow [0,\infty) \times \mathbb{R}$ such that 
$$\theta^2_p(t,x):= (t,\Theta_{p(t)}^2(x)).$$
\end{defn}
\begin{defn}
For a space time white noise $\dot{W}$, we denote by $\dot{W}^-$ the space time white noise such that $\dot{W}^-([0,t] \times A) = \dot{W}([0,t] \times (-A))$.
\end{defn}
\begin{defn}
Let $(\Omega, \mathscr{F}, \mathscr{F}_t,\mathbb{P})$ be a complete filtered probability space. Let $\dot{W}$ be a space time white noise on this space which respect the filtration $\mathscr{F}_t$. We say that the quintuple $(u^1, \eta^1, u^2, \eta^2, p)$ is a local solution to the moving boundary problem with initial data $(u^1_0,u^2_0,p_0)$, where $(u_0^1 \circ (\Theta_{p_0}^1)^{-1}, u_0^2 \circ (\Theta_{p_0}^2)^{-1}) \in C_0((0,1))^+ \times C_0((0,1))^+$, up to the $\mathscr{F}_t$-stopping time $\tau$ if 
\begin{enumerate}[(i)]
\item $p(0) = p_0$ and $p^{\prime}(t)$ is $\mathscr{F}_t$-adapted, with $p^{\prime}(t)=h(v^1(t,\cdot),v^2(t,\cdot))$.
\item $(v^1, \tilde{\eta}^1):=(u^1 \circ (\theta^1_p)^{-1}, \eta^1 \circ (\theta^1_p)^{-1})$ solves the reflected SPDE
\begin{equation}
\frac{\partial v^1}{\partial t}= \Delta v^1 - p^\prime(t)\frac{\partial v^1}{\partial x} + f_1(x,v^1) + \sigma_1(x,v^1)\dot{W} + \tilde{\eta}^1 
\end{equation}
with Dirichlet boundary conditions $v^1(0)=v^1(1)=0$ and initial data $v^1_0 = u^1_0 \circ (\Theta_{p_0}^1)^{-1}$ until time $\tau$.
\item $(v^2, \tilde{\eta}^2):=(u^2 \circ (\theta^2_p)^{-1}, \eta^2 \circ (\theta^2_p)^{-1})$ solves the reflected SPDE
\begin{equation}
\frac{\partial v^2}{\partial t}= \Delta v^2 + p^\prime(t)\frac{\partial v^2}{\partial x} + f_2(x,v^2) + \sigma_2(x,v^2)\dot{W}^- + \tilde{\eta}^2 
\end{equation}
with Dirichlet boundary conditions $v^2(0)=v^2(1)=0$ and initial data $v^2_0 = u^2_0 \circ (\Theta_{p_0}^2)^{-1}$ until time $\tau$.
\end{enumerate}
We refer to $(v^1, \tilde{\eta}^1, v^2, \tilde{\eta}^2)$ as the solution to the moving boundary problem in the relative frame.
\end{defn}

We now introduce the precise conditions on the coefficients. We suppose that for $i=1,2$, $f_i , \sigma_i$ are measurable mappings $f_i, \sigma_i : [0,1] \times \mathbb{R}^+ \rightarrow \mathbb{R}$ and that $h: C_0((0,1))^2 \rightarrow \mathbb{R}$ is a measurable function such that 
\begin{enumerate}[(I)]
\item For every $x \in [0,1]$, $u,v \in \mathbb{R}^+$, 
\begin{equation*}
|f_i(x,u)-f_i(x,v)| + |\sigma_i(x,u)-\sigma_i(x,v)| \leq C |u-v|
\end{equation*}
for some constant $C$.
\item $|f_i(x,u)|+ |\sigma_i(x,u)| \leq R(1+ |u|)$ for some constant $R$.
\item $h$ is bounded on bounded sets.
\item $h$ is Lipschitz, so there exists a constant $K$ such that for every $u_1,u_2,v_1,v_2 \in C_0((0,1))$, $|h(u_1,v_1)-h(u_2,v_2)| \leq K(\|u_1-u_2\|_{\infty}+\|v_1-v_2\|_{\infty})$.
\end{enumerate}

\subsection{Existence and Uniqueness}

\begin{thm}\label{Main}
Let $(\Omega, \mathscr{F}, \mathscr{F}_t,\mathbb{P})$ be a complete filtered probability space. Let $\dot{W}$ be a space time white noise on this space which respects the filtration $\mathscr{F}_t$. Suppose that $f, \sigma$ and $h$ satisfy the conditions (I)-(IV). Then there exists a unique maximal solution $(u^1, \eta^1, u^2, \eta^2, p)$ to the moving boundary problem. The blow-up time, $\tau$, is given by
\begin{equation*}
\tau:= \sup\limits_{M >0}\left[ \inf \left\{ t \geq 0 \; | \; \|u^1\|_{\infty,t} + \|u^2\|_{\infty,t} \geq M \right\} \right],
\end{equation*}
with $\tau >0$ almost surely.
\end{thm}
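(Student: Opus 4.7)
The approach is to work in the relative frame, reducing the moving boundary problem to a pair of reflected SPDEs on $[0,1]$ coupled through the common advection coefficient $p'(t) = h(v^1(t,\cdot), v^2(t,\cdot))$, and to circumvent the unboundedness of $h$ (condition (IV) gives Lipschitz but not boundedness) by truncation and consistency. For each $M > 0$, I would replace $h$ by a bounded, Lipschitz $h_M$ that coincides with $h$ on $\{\|v^1\|_\infty + \|v^2\|_\infty \leq M\}$, prove existence and uniqueness of a global solution $(v^{1,M}, \tilde\eta^{1,M}, v^{2,M}, \tilde\eta^{2,M}, p^M)$ to the $h_M$-truncated problem, then glue via the stopping times $\tau_M := \inf\{t : \|v^{1,M}\|_{\infty,t}+\|v^{2,M}\|_{\infty,t} \geq M\}$; the maximal solution then lives on $[0,\tau)$ with $\tau = \sup_M \tau_M$, matching the expression in the statement.

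\textbf{Truncated problem.} For fixed $M$, I would solve the truncated problem by a two-layer Picard scheme in the spirit of \cite{Xu}, decoupling the obstacle problem from the SPDE. The inner layer takes an adapted path $q:[0,T] \to [-B_M, B_M]$ (with $B_M := \sup|h_M|$) and solves the two linear reflected SPDEs with prescribed advection $\mp q(t)\partial_x v$ by Picard iteration on the mild formulation; the obstacle comparison (\ref{Obstacle[0,1]}) lets us bound the reflected part of the Picard error by the sup-norm of the corresponding unreflected error, reducing contraction to the standard Walsh $L^2$ machinery under hypotheses (I)--(II). The outer layer iterates $q \mapsto h_M(v^1[q], v^2[q])$: using Lipschitz condition (IV) together with a quantitative estimate of $\|v^i[q_1] - v^i[q_2]\|_{\infty,t}$ in terms of $\int_0^t |q_1(s) - q_2(s)|\, ds$ (obtained via the obstacle estimate and the same SPDE bounds), contraction holds on an interval $[0,T_*]$ with $T_*$ depending only on $M$, $K$ and the SPDE constants. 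Since $B_M$ and $T_*$ are uniform in the interval, iterating over successive intervals yields a unique global solution for the truncated problem.

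\textbf{Gluing and positivity of $\tau$.} On $[0,\tau_M]$ the $h_M$- and $h_{M+1}$-truncated equations coincide, and uniqueness of the $h_{M+1}$-problem forces $(v^{i,M+1}, p^{M+1}) = (v^{i,M}, p^M)$ on this interval; the family $\{(v^{1,M}, v^{2,M}, p^M)\}_M$ thus patches into a well-defined adapted process on $[0,\tau)$. Pushing back through the frame change of Definition \ref{frameshift} yields the quintuple $(u^1,\eta^1,u^2,\eta^2,p)$ solving the moving boundary problem up to $\tau$, and by construction $\tau = \sup_M \tau_M$ coincides with the expression in the statement. For almost-sure positivity of $\tau$, continuity of $v^{i,M}$ on $[0,T]\times[0,1]$ combined with $\|v^i_0\|_\infty < \infty$ gives $\tau_M > 0$ a.s.\ once $M > \|v^1_0\|_\infty + \|v^2_0\|_\infty$. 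Uniqueness of the maximal local solution then follows from the truncated uniqueness applied to any competing solution stopped at its first exit from the ball of radius $M$, and sending $M \to \infty$.

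\textbf{Main obstacle.} The principal technical issue is the handling of the advection term $\mp q(t)\partial_x v$ in the inner Picard step: it is not a Lipschitz perturbation of $\Delta$ on $C_0((0,1))$ and so cannot simply be absorbed into the drift. A clean route is to integrate by parts in the mild formulation, transferring $\partial_x$ from $v$ onto the Dirichlet heat kernel $G$ on $[0,1]$ (using $v(t,0)=v(t,1)=0$ to kill boundary terms), and to control the resulting singular integral $\int_0^t \int_0^1 \partial_y G(t-s,x,y)\, q(s)\, v(s,y)\, dy\, ds$ via the heat kernel estimates collected in the appendix; the singularity at $s=t$ is integrable in $t$, which is what makes the outer Lipschitz-in-$q$ dependence above available. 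Combined with the obstacle comparison (\ref{Obstacle[0,1]}), which converts the reflection into a harmless obstacle perturbation, this is what makes both layers of the Picard argument -- and with them the whole theorem -- go through; the remaining steps are routine consequences of (I)--(IV) and the definition of $\tau$.
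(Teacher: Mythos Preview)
Your overall architecture---truncate, prove global existence/uniqueness for the truncated problem, then glue via the stopping times $\tau_M$---matches the paper's, and you have correctly identified the central technical obstacle (the advection term) and its resolution (integration by parts onto $\partial_y H$, integrable singularity). Where you diverge is in the implementation of the truncated problem:

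\begin{itemize}
\item \emph{Truncation.} You truncate only $h$, keeping the genuine $\partial_x v$. The paper truncates more aggressively: it replaces the advection by $h_M(v^1,v^2)\,\partial_x(v^1\wedge M)$, i.e.\ it caps the argument of the derivative as well. This makes the full coupling map $(v^1,v^2)\mapsto h_M(v^1,v^2)(v^i\wedge M)$ globally bounded and globally Lipschitz in $\|\cdot\|_\infty$, with constants depending only on $M$.
\item \emph{Picard structure.} You propose a two-layer fixed point (inner: reflected SPDE with frozen advection $q$; outer: $q\mapsto h_M(v^1[q],v^2[q])$). The paper runs a \emph{single} coupled Picard iteration directly on $(v^1_n,v^2_n)$: given the pair, one writes down the mild form for $w^i_{n+1}$ with $h_M(v^1_n,v^2_n)(v^i_n\wedge M)$ in the advection slot, reflects via the obstacle problem to get $v^i_{n+1}$, and closes the usual Gronwall loop in $L^p(\Omega;C([0,T]\times[0,1]))^2$.
\end{itemize}

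What the paper's choice buys is precisely that the contraction estimate is clean: the advection increment is bounded by $C_M(\|v^1_n-v^1_{n-1}\|_{\infty,r}+\|v^2_n-v^2_{n-1}\|_{\infty,r})$ with a \emph{deterministic} constant, because both factors in the product are truncated. In your scheme the outer-layer Lipschitz estimate $\|v^i[q_1]-v^i[q_2]\|_{\infty,t}\lesssim \int_0^t|q_1-q_2|$ hides a factor of $\|v[q_1]\|_{\infty,t}$, which is random and a priori unbounded; to close you would need either an additional a priori $L^{2p}$ bound on $v[q]$ (available via a Gronwall argument since $q$ is bounded, but not stated) or to adopt the paper's double truncation after all. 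This is not a fatal gap---your route can be made to work---but the paper's single Picard with double truncation is shorter and avoids the issue entirely.
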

The following notation for the Dirichlet heat kernel will be used throughout the rest of the paper.

\begin{defn}\label{DirichletHK}
We define $H(t,x,y)$ to be the Dirichlet heat kernel on $[0,1]$, so that 
\begin{equation}
H(t,x,y):= \frac{1}{\sqrt{4 \pi t}}\sum\limits_{n =- \infty}^{\infty} \left[ \exp\left( - \frac{(x-y+2n)^2}{4t} \right) - \exp\left(- \frac{(x+y+2n)^2}{4t} \right) \right].
\end{equation}
\end{defn}

We will prove that we have global existence to the problem where the moving boundary term is truncated. Before doing so, we present the following result which will be applied in the argument.

\begin{prop}\label{finite white noise bound}
Fix $T>0$ and let $v\in L^p(\Omega; L^{\infty}([0,T]\times [0,1]))$, where $p>10$. Define, for $t \in [0,T]$ and $x \in [0,1]$,
\begin{equation*}
w(t,x):= \int_0^t \int_0^1 H(t-s,x,y) v(s,y) \textrm{W}(\textrm{d}y, \textrm{d}s).
\end{equation*}
Then we have that $w$ is continuous, and for $t \in [0,T]$,
\begin{equation*}
\mathbb{E}\left[ \|w(t,x)\|_{\infty,t}^p \right] \leq C_{p,T} \mathbb{E}\left[ \int_0^t  \|v(s,x)\|_{\infty,s}^p \textrm{d}s \right]
\end{equation*}
\end{prop}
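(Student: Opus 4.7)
The plan is to prove this via the factorization method of Da Prato--Kwapie\'n--Zabczyk, adapted to the Dirichlet heat kernel $H$ on $[0,1]$. Fix $\alpha\in(0,1/2)$ (the precise admissible range will be pinned down by the two estimates below). Using the classical identity $\int_s^t (t-r)^{\alpha-1}(r-s)^{-\alpha}\,dr = \pi/\sin\pi\alpha$ together with the Chapman--Kolmogorov relation $H(t-s,x,y) = \int_0^1 H(t-r,x,z)H(r-s,z,y)\,dz$ and a stochastic Fubini argument, I would first rewrite
\begin{equation*}
w(t,x) = \frac{\sin\pi\alpha}{\pi}\int_0^t (t-r)^{\alpha-1}\int_0^1 H(t-r,x,z)\,Y(r,z)\,dz\,dr,
\end{equation*}
where
\begin{equation*}
Y(r,z) := \int_0^r\int_0^1 (r-\tau)^{-\alpha}H(r-\tau,z,y)\,v(\tau,y)\,W(dy,d\tau).
\end{equation*}
The proof then reduces to (a) a stochastic estimate on $Y$ in $L^p$, and (b) a deterministic smoothing estimate for the kernel $(t-r)^{\alpha-1}H$. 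Continuity of $w$ can be read off from the factorised representation and dominated convergence once the integrability assertions in (a)--(b) are in place.

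For (a), I would apply Burkholder's inequality for stochastic integrals against space--time white noise, bound $v(\tau,y)^2\leq \|v\|_{\infty,\tau}^2$, and use the standard heat kernel estimate $\int_0^1 H(s,z,y)^2\,dy \leq C s^{-1/2}$. This reduces the $p$-th moment of $Y(r,z)$ to the $(p/2)$-th moment of a time integral involving $(r-\tau)^{-2\alpha-1/2}\|v\|_{\infty,\tau}^2$. H\"older's inequality in $\tau$ with exponents $p/2$ and $p/(p-2)$ then yields
\begin{equation*}
\mathbb{E}[|Y(r,z)|^p] \leq C_{p,\alpha,T}\,\mathbb{E}\bigg[\int_0^r \|v\|_{\infty,\tau}^p\,d\tau\bigg],
\end{equation*}
uniformly in $(r,z)\in[0,T]\times[0,1]$, provided $(2\alpha+1/2)\cdot p/(p-2) < 1$, i.e.\ $\alpha < 1/4 - 1/p$. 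Integrating over $(r,z)\in [0,t]\times[0,1]$ yields a bound of the same shape on $\mathbb{E}[\|Y\|_{L^p([0,t]\times[0,1])}^p]$.

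For (b), apply H\"older's inequality in $(r,z)$ to the representation above with conjugate exponents $p$ and $p'=p/(p-1)$. Using the Gaussian-type bound $\int_0^1 H(s,x,z)^{p'}\,dz \leq C_{p'}\,s^{(1-p')/2}$ uniformly in $x\in[0,1]$, the surviving deterministic integral is
\begin{equation*}
\int_0^t (t-r)^{(\alpha-1)p' + (1-p')/2}\,dr,
\end{equation*}
which is finite precisely when $\alpha > 3/(2p)$. Combining with the earlier upper bound $\alpha < 1/4 - 1/p$ forces $3/(2p) < 1/4 - 1/p$, i.e.\ $p > 10$, which is exactly the hypothesis. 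In this regime I obtain $\|w(t,\cdot)\|_\infty \leq C_{p,\alpha,T}\,\|Y\|_{L^p([0,t]\times[0,1])}$; since the right-hand side is non-decreasing in $t$, the same bound holds with $\|w\|_{\infty,t}$ on the left. Raising to the $p$-th power, taking expectations and applying Fubini together with the pointwise estimate from (a) produces the claimed inequality. The one genuine obstacle is the simultaneous selection of $\alpha$ satisfying both $\alpha < 1/4 - 1/p$ (from the stochastic step) and $\alpha > 3/(2p)$ (from the smoothing step); these two competing constraints are the reason for the sharp threshold $p > 10$, while the remaining measurability and stochastic Fubini verifications are routine given that $v$ is adapted with paths in $L^\infty([0,T]\times[0,1])$.
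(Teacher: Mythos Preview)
Your proof is correct and follows the factorization method of Da Prato--Kwapie\'n--Zabczyk, which is a genuinely different route from the paper's. The paper instead proves a moment bound on the increments
\[
\mathbb{E}\bigl[|w(\tau,x)-w(s,y)|^p\bigr]\le C_{p,T}\bigl(|\tau-s|^{1/2}+|x-y|\bigr)^{(p-4)/2}\int_0^t\mathbb{E}\bigl[\|v\|_{\infty,r}^p\bigr]\,dr
\]
via Burkholder and three heat-kernel estimates (the analogues on $[0,1]$ of your $\int H^2\,dy\le Cs^{-1/2}$, plus time- and space-increment versions), and then invokes a quantitative Kolmogorov-type criterion (Corollary~A.3 in \cite{Dalang}) to pass from increment moments to a bound on $\mathbb{E}[\|w\|_{\infty,t}^p]$. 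In the parabolic metric $|\tau-s|^{1/2}+|x-y|$ the effective dimension of $[0,T]\times[0,1]$ is $3$, so one needs the exponent $(p-4)/2>3$, recovering exactly the threshold $p>10$ that your two constraints $3/(2p)<\alpha<1/4-1/p$ produce.

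The trade-offs: the paper's increment-moment approach yields H\"older continuity of $w$ for free, which is reused later in the paper when proving H\"older regularity of the full reflected solution; your factorization argument is more self-contained for the sup-norm bound and avoids appealing to an external Kolmogorov-type lemma, but gives only continuity rather than quantitative H\"older exponents without further work. Both arguments rest on the same heat-kernel scaling and arrive at the same sharp threshold $p>10$.
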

\begin{proof}
Let $t \in [0,T]$. We have that, for $\tau, s \in [0,t]$ and $x, y \in [0,1]$
\begin{equation}\begin{split}
\mathbb{E}& \left[ |w(\tau,x)- w(s,y)|^p \right]\\  \leq & C_p  \mathbb{E}\left[ \left|\int_s^{\tau} \int_0^1 H(\tau-r,x,z) v(r,z) \textrm{W}(\textrm{d}z,\textrm{d}r) \right|^p \right] \\ & + C_p  \mathbb{E}\left[ \left| \int_0^s \int_0^1 \left[H(\tau-r,x,z)-H(\tau-r,y,z) \right] v(r,z) \textrm{W}(\textrm{d}z,\textrm{d}r) \right|^p \right] \\& + C_p \mathbb{E}\left[ \left| \int_0^s \int_0^1 \left[ H(\tau-r,y,z)- H(s-r,y,z) \right] v(r,z) \textrm{W}(\textrm{d}z,\textrm{d}r) \right|^p \right].
\end{split}
\end{equation}
For the first term, we have by the Burkholder's inequality that it is at most 
\begin{equation}
C_p \mathbb{E}\left[\left|\int_s^{\tau} \int_0^1 H(\tau-r,x,z)^2 v(r,z)^2 \textrm{d}z\textrm{d}r \right|^{p/2} \right] \leq \mathbb{E}\left[\left|\int_s^{\tau} \left( \int_0^1 H(\tau-r,x,z)^2 \textrm{d}z \right)\|v\|_{\infty,r}^2 \; \textrm{d}r \right|^{p/2} \right]. 
\end{equation}
An application of H\"{o}lder's inequality then gives that this is at most 
\begin{equation}
\left( \int_s^{\tau} \left[ \int_0^1 H(\tau-r,x,z)^2\textrm{d}r \right]^{p/(p-2)} \textrm{d}z \right)^{(p-2)/2} \times \int_s^{\tau} \mathbb{E}\left[\|v\|_{\infty,r}^p \right] \textrm{d}r.
\end{equation}
By the estimate (2) of Proposition \ref{HeatKernelEstimatesCompact}, we have that this is at most
\begin{equation}
C_{p,T} |\tau-s|^{(p-4)/4}\int_s^{\tau} \mathbb{E}\left[\|v\|_{\infty,r}^p \right] \textrm{d}r.
\end{equation}
By arguing similarly and making use of estimates (1) and (3) from Proposition \ref{HeatKernelEstimatesCompact}, we obtain that for $\tau , s \in [0,t]$ and $x, y \in [0,1]$,
\begin{equation}\begin{split}
\mathbb{E} \left[ |w(\tau,x)- w(s,y)|^p \right] \leq  & C_{p,T} \left( |\tau-s|^{1/2} + |x-y| \right)^{(p-4)/2} \times \int_0^t \mathbb{E}\left[\|v\|_{\infty,r}^p \right] \textrm{d}r \\ = & C_{p,T} \left( |\tau-s|^{1/2} + |x-y| \right)^{3+ \frac{p-10}{2}} \times \int_0^t \mathbb{E}\left[\|v\|_{\infty,r}^p \right] \textrm{d}r .
\end{split}
\end{equation}
The result then follows by Corollary A.3 in \cite{Dalang}. 
\end{proof}

\begin{prop}\label{Truncated problem Compact}
Let $(\Omega, \mathscr{F}, \mathscr{F}_t,\mathbb{P})$ be a complete filtered probability space.  Let $\dot{W}$ be  a space time white noise on this space which respects the filtration $\mathscr{F}_t$. Let $f_i, \sigma_i$ and $h$ satisfy the conditions (i)-(iv) and suppose that $v_0^1$, $v_0^2 \in C_0((0,1))^+$. Define for $M \geq 0$ the function $h_M  :  C_0((0,1)) \rightarrow \mathbb{R}$ such that 
$$h_M(v_1,v_2):= h(v_1 \wedge M, v_2 \wedge M).$$
Then for every $M \geq 0$, there exists a unique pair of $C_0((0,1))$-valued processes $v^1,v^2$ together with $\eta^1, \eta^2$ such that

\begin{enumerate}

\item $(v^1,\eta^1)$ is a global solution to the reflected SPDE 

\begin{equation}\label{truncatedc b}
\frac{\partial v^1}{\partial t}= \Delta v^1 - h_M(v^1,v^2) \frac{\partial}{\partial x}(v^1 \wedge M) + f_1(x, v^1) + \sigma_1(x,v^1) \dot{W} + \eta^1
\end{equation}
with Dirichlet boundary conditions $v^1(t,0)=v^1(t,1)=0$ and initial data $v_0^1.$

\item $(v^2,\eta^2)$ is a global solution to the reflected SPDE 

\begin{equation}\label{truncatedc a}
\frac{\partial v^2}{\partial t}= \Delta v^2 + h_M(v^1,v^2) \frac{\partial}{\partial x}(v^2 \wedge M) + f_2(x, v^2) + \sigma_2(x,v^2) \dot{W}^- + \eta^2
\end{equation}
with Dirichlet boundary conditions $v^2(t,0)=v^2(t,1)=0$ and initial data $v_0^2.$
\end{enumerate}
We then call $(v^1, \eta^1, v^2, \eta^2)$ the solution to the $M$-truncated problem in the relative frame.
\end{prop}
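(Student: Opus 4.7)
The plan is to run a two-step Picard iteration on the pair $(v^1,v^2)$ in the style of Xu, decoupling the obstacle problem from the linear SPDE. Starting from $v^{i,(0)}\equiv 0$, at step $n+1$ I first form the Duhamel representation $\tilde w^{i,(n+1)}$ of the \emph{unreflected} SPDE with coefficients evaluated at the previous iterate,
\begin{align*}
\tilde w^{i,(n+1)}(t,x) &= \int_0^1 H(t,x,y)\, v_0^i(y)\,\mathrm dy + \int_0^t\!\!\int_0^1 H(t-s,x,y)\, f_i\bigl(y,v^{i,(n)}(s,y)\bigr)\,\mathrm dy\,\mathrm ds \\
&\quad \pm \int_0^t h_M\bigl(v^{1,(n)}(s,\cdot),v^{2,(n)}(s,\cdot)\bigr) \int_0^1 \partial_y H(t-s,x,y)\,(v^{i,(n)}\wedge M)(s,y)\,\mathrm dy\,\mathrm ds \\
&\quad + \int_0^t\!\!\int_0^1 H(t-s,x,y)\, \sigma_i\bigl(y,v^{i,(n)}(s,y)\bigr)\,W_i(\mathrm dy,\mathrm ds),
\end{align*}
where the spatial derivative has been transferred onto the heat kernel via integration by parts (all boundary contributions vanish because both $H(\cdot,\cdot,0),H(\cdot,\cdot,1)$ and $v^{i,(n)}\wedge M\big|_{y\in\{0,1\}}$ are zero), and $W_i$ is $W$ or $W^-$ as appropriate. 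I then take $(v^{i,(n+1)},z^{i,(n+1)})$ to be the solution of the obstacle problem of Section~2 with obstacle $-\tilde w^{i,(n+1)}$, so that $v^{i,(n+1)}=\tilde w^{i,(n+1)}+z^{i,(n+1)}\geq 0$, and let $\eta^{i,(n+1)}$ be the associated reflection measure. By induction each iterate is $C_0(0,1)$-valued, continuous and $\mathscr F_t$-adapted.

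Fix $p>10$ and set $E_n(t):=\sum_{i=1}^{2}\mathbb E\|v^{i,(n+1)}-v^{i,(n)}\|_{\infty,t}^p$. The central estimate I would prove is
\[
E_n(t)\;\leq\; C_{p,T,M,K}\int_0^t E_{n-1}(s)\,\mathrm ds, \qquad t\in[0,T],
\]
which, iterated, gives $E_n(T)\leq (C_{p,T,M,K}T)^n E_0(T)/n!$ and hence a Cauchy sequence in $L^p(\Omega; C([0,T]\times[0,1]))$; note $E_0(T)<\infty$ because the transport-type term vanishes at the zeroth iterate and the remaining terms have bounded coefficients by hypothesis~(II). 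The obstacle comparison \eqref{Obstacle[0,1]} yields $\|v^{i,(n+1)}-v^{i,(n)}\|_{\infty,t}\leq 2\|\tilde w^{i,(n+1)}-\tilde w^{i,(n)}\|_{\infty,t}$. The $f_i$-contribution to the latter is handled by Lipschitz hypothesis~(I) combined with $\sup_x\int_0^1 H(r,x,y)\,\mathrm dy\leq 1$, and the $\sigma_i$-contribution by Proposition~\ref{finite white noise bound}; both yield bounds of the form $C\int_0^t\mathbb E\|v^{i,(n)}-v^{i,(n-1)}\|_{\infty,s}^p\,\mathrm ds$.

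The main obstacle is the transport-type term involving $\partial_y H$. Decomposing
\[
h_M^{(n)}(v^{i,(n)}\wedge M) - h_M^{(n-1)}(v^{i,(n-1)}\wedge M) = [h_M^{(n)}-h_M^{(n-1)}](v^{i,(n)}\wedge M) + h_M^{(n-1)}\bigl[(v^{i,(n)}\wedge M)-(v^{i,(n-1)}\wedge M)\bigr],
\]
I use that $h$ is bounded on bounded sets (so $|h_M^{(n-1)}|\leq C(M)$ since $\|v\wedge M\|_\infty\leq M$), Lipschitz condition~(IV), and $|a\wedge M-b\wedge M|\leq|a-b|$ to obtain a pointwise bound proportional to $\sum_j\|v^{j,(n)}-v^{j,(n-1)}\|_{\infty,s}$. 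The heat kernel estimate $\sup_x\int_0^1|\partial_y H(r,x,y)|\,\mathrm dy\leq Cr^{-1/2}$ then produces a fractional integral in $s$ with an integrable singular kernel, and H\"older's inequality (valid because $p>2$ forces $p/(2(p-1))<1$) gives the desired $L^p$ bound. Summing the three contributions closes the estimate.

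Once the Cauchy property is in hand, continuity and $\mathscr F_t$-adaptedness of the limit $(v^1,v^2)$ follow, and one passes to the weak form by noting that $\tilde w^{i,(n)}\to\tilde w^{i,(\infty)}$ uniformly in probability and invoking the stability of the obstacle problem to identify the reflection measures $\eta^i$. Uniqueness for the $M$-truncated problem follows by applying the same contraction estimate to the difference of two putative solutions and invoking Gronwall's inequality, noting that condition (iv) in the definition of local solution is vacuous in the global setting.
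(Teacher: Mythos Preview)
Your proposal is correct and follows essentially the same route as the paper: a Picard iteration that first builds the mild solution $\tilde w^{i,(n+1)}$ of the unreflected equation with coefficients frozen at the previous step, then applies the deterministic obstacle map, and controls increments via the comparison estimate \eqref{Obstacle[0,1]} together with the three-term bound (drift via Lipschitz and $\int H\leq 1$, noise via Proposition~\ref{finite white noise bound}, transport via $\int|\partial_y H|\leq Cr^{-1/2}$ and H\"older). The only cosmetic difference is that the paper starts the iteration from the constant-in-time initial data $v^i_0$ rather than from zero; this has no effect on the argument.
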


\begin{proof}
Note that by a concatenation argument, it is sufficient to prove existence and uniqueness on the time interval $[0,T]$ for any $T>0$. Fix $T>0$.  We perform a Picard iteration in order to obtain existence. The first approximations are given by $v^1(t,x)=v^1_0(x)$ and $v^2(t,x)=v^2_0(x)$ for all time. For $n \geq 1$, we let $w_{n+1}^1$ solve the SPDE
\begin{equation}
\frac{\partial w_{n+1}^1}{\partial t} = \Delta w_{n+1}^1 - h_M(v^1_n,v^2_n) \frac{\partial}{\partial x}(v^1_n \wedge M) + f_1(x, v^1_n) + \sigma_1(x,v^1_n) \dot{W}
\end{equation}
with Dirichlet boundary conditions $w_{n+1}^1(t,0)=w_{n+1}^1(t,1)=0$ and initial data $v_0^1$. We then set $v^1_{n+1}:= w_{n+1}^1+ z_{n+1}^1$, where $z_{n+1}^1$ solves the obstacle problem with obstacle $-w_{n+1}^1$. We similarly define $v^2_{n+1}$ in terms of $v^1_n$ and $v^2_n$, via functions $w_{n+1}^2$ and $z_{n+1}^2$. Writing the equation for $w_{n+1}^1$  in mild form gives the expression 
\begin{equation}\label{w}\begin{split}
w_{n+1}^1(t,x)= & \int_0^1 H(t,x,y)v_0^1(y) \textrm{d}y \\ & - \int_0^t \int_0^1 \frac{\partial H}{\partial y}(t-s,x,y) h_M(v^1_n(s,\cdot), v^2_n(s,\cdot)) \left[v^1_n(s,y) \wedge M \right] \textrm{d}y \textrm{d}s \\ & + \int_0^t \int_0^1 H(t-s,x,y)f_1(y,v^1_n(s,y)) \textrm{d}x\textrm{d}s \\ & + \int_0^t \int_0^1 H(t-s,x,y) \sigma_1(y,v^1_n(s,y)) \textrm{W}(\textrm{d}y,\textrm{d}s),
\end{split}
\end{equation}
where $H$ is the Dirichlet heat kernel as in Definition \ref{DirichletHK}. Recall that by Theorem \ref{Obstacle[0,1]}, we have that $\|v^1_{n+1} - v^1_n \|_{\infty,t} \leq 2\|w_{n+1}^1-w_n^1 \|_{\infty,t}$ almost surely. Therefore,
\begin{equation*}\begin{split}
\mathbb{E} & \left[ \|v^1_{n+1}-v^1_n \|_{\infty, t}^p \right] \leq 2^p \mathbb{E}\left[ \|w_{n+1}^1-w_n^1 \|_{\infty, t}^p \right] \\ & \leq \mathbb{E}\left[ \sup\limits_{s \in [0,t]}\sup\limits_{x \in [0,1]} \left| \int_0^s \int_0^1 H(s-r,x,y)\left[ f_1(y,v^1_n(r,y))-f_1(y,v^1_{n-1}(r,y) \right] \textrm{d}y \textrm{d}r \right|^p \right] \\ & + \mathbb{E}\left[\sup\limits_{s \in [0,t]}\sup\limits_{x \in [0,1]} \left| \int_0^s \int_0^1 H(s-r,x,y) \left[ \sigma_1(y,v^1_n(r,y))- \sigma_1(y,v^1_{n-1}(r,y)) \right] \textrm{W(dy,dr)} \right|^p \right] \\ & + \mathbb{E}\left[\sup\limits_{s \in [0,t]}\sup\limits_{x \in [0,1]} \left| \int_0^s \int_0^1 \frac{\partial H}{\partial y}(s-r,x,y)\left[ h_M(v^1_n(r,\cdot),v_n^2(r,\cdot))(v^1_n(r,y) \wedge M ) \right. \right. \right. \\ & \; \; \; \; \; \; \; \; \; \; \left. \left. \left. - h_M(v^1_{n-1}(r,\cdot),v_{n-1}^2(r,\cdot))(v^1_{n-1}(r,y) \wedge M )  \right] \textrm{d}y \textrm{d}r \right|^p \right] 
\end{split}
\end{equation*}
We deal with the three terms separately. For the first, we apply H\"{o}lder's inequality to see that it is at most  
\begin{equation}\begin{split}
\mathbb{E} & \left[ \sup\limits_{s \in [0,t]}\sup\limits_{x \in [0,1]} \left| \int_0^s \int_0^1 H(s-r,x,y)\left[ f_1(y,v^1_n(r,y))-f_1(y,v^1_{n-1}(r,y) \right]^p \textrm{d}y \textrm{d}r \right| \right. \\ & \times \left. \left|\int_0^s \int_0^1 H(s-r,x,y) \textrm{d}y \textrm{d}r \right|^{p/q} \right] \\ & \leq  C_T \mathbb{E}\left[ \sup\limits_{s \in [0,t]}\sup\limits_{x \in [0,1]} \left| \int_0^s \int_0^1 H(s-r,x,y)\left[ f_1(y,v^1_n(r,y))-f_1(y,v^1_{n-1}(r,y) \right]^p \textrm{d}y \textrm{d}r \right| \right]. 
\end{split}
\end{equation}
Making use of the Lipschitz property of the function $f_1$, this is at most
\begin{equation}\begin{split}
C_T & \mathbb{E}\left[ \sup\limits_{s \in [0,t]}\sup\limits_{x \in [0,1]} \left| \int_0^s \int_0^1 H(s-r,x,y)\left[ v^1_n(r,y)-v^1_{n-1}(r,y) \right]^p \textrm{d}y \textrm{d}r \right| \right] \\ \leq & C_T \mathbb{E}\left[ \sup\limits_{s \in [0,t]}\sup\limits_{x \in [0,1]} \left| \int_0^s \left( \int_0^1 H(s-r,x,y) \textrm{d}y \right) \| v^1_n-v^1_{n-1} \|_{\infty,r}^p \textrm{d}s \right| \right] \\ \leq & C_T \mathbb{E}\left[ \sup\limits_{s \in [0,t]}\sup\limits_{x \in [0,1]} \left| \int_0^s \| v^1_n-v^1_{n-1} \|_{\infty,r}^p \textrm{d}r \right| \right].  
\end{split}
\end{equation}
For the second term, we apply Proposition \ref{finite white noise bound} and the Lipschitz property of $\sigma_1$ to deduce that, for $p>10$ and $t \in [0,T]$,
\begin{multline*}
\mathbb{E}\left[\sup\limits_{s \in [0,t]}\sup\limits_{x \in [0,1]} \left| \int_0^s \int_0^1 H(s-r,x,y) \left[ \sigma_1(y,v^1_n(r,y))- \sigma_1(y,v^1_{n-1}(r,y)) \right] \textrm{W(dy,dr)} \right|^p \right] \\ \leq C_{p,T}\int_0^t \mathbb{E} \left[ \| v^1_n -v^1_{n-1} \|_{\infty,r}^p \right] \textrm{d}r.
\end{multline*}
Finally, we deal with the third term. Using the Lipschitz property of $h$, we see that the third term is at most
\begin{equation}\begin{split}
C_M & \mathbb{E}\left[ \sup\limits_{s \in [0,t]}\sup\limits_{x \in [0,1]} \left| \int_0^s \int_0^1 \left| \frac{\partial H}{\partial y}(s-r,x,y)\right|\left[  \| v^1_n - v^1_{n-1} \|_{\infty,r} + \|v^2_n - v^2_{n-1} \|_{\infty,r} \right] \textrm{d}y \textrm{d}r \right|^p \right] \\ = & C_M  \mathbb{E}\left[ \sup\limits_{s \in [0,t]}\sup\limits_{x \in [0,1]} \left| \int_0^s \left( \int_0^1 \left| \frac{\partial H}{\partial y}(s-r,x,y)\right| \textrm{d}y \right)\left[  \| v^1_n - v^1_{n-1} \|_{\infty,r} + \|v^2_n - v^2_{n-1} \|_{\infty,r} \right] \textrm{d}r \right|^p \right].
\end{split}
\end{equation}
Applying H\"{o}lder's inequality and Proposition \ref{Compact Heat Kernel Derivative} then gives that this is at most
\begin{equation}
\begin{split}
C_M & \mathbb{E}\left[ \left( \sup\limits_{x \in [0,1]}\int_0^t \left[ \int_0^1 \left| \frac{\partial H}{\partial y}(s-r,x,y)\right| \textrm{d}y \right]^q \textrm{d}r \right)^{p/q} \times \int_0^t \left[\| v^1_n - v^1_{n-1} \|_{\infty,r} +\| v^2_n - v^2_{n-1} \|_{\infty,r}  \right]^p \textrm{d}r \right] \\ \leq & C_M \times \mathbb{E} \left[ \left( \int_0^t \frac{1}{(t-r)^{q/2}} \textrm{d}r \right)^{p/q} \times \int_0^t \left[\| v^1_n - v^1_{n-1} \|_{\infty,r} +\| v^2_n - v^2_{n-1} \|_{\infty,r}  \right]^p \textrm{d}r \right] \\ = & C_M \times \mathbb{E} \left[\left( \int_0^t \frac{1}{r^{q/2}} \textrm{d}r \right)^{p/q} \times \int_0^t \left[\| v^1_n - v^1_{n-1} \|_{\infty,r} +\| v^2_n - v^2_{n-1} \|_{\infty,r}  \right]^p \textrm{d}r \right].
\end{split} 
\end{equation}
For $p>10$ and corresponding $q \in (1, \frac{10}{9})$, this is at most 
\begin{equation}
C_{M,p,T} \times \int_0^t \mathbb{E}[\left[\| v^1_n - v^1_{n-1} \|_{\infty,r}+\| v^2_n - v^2_{n-1} \|_{\infty,r} \right]^p] \textrm{d}r. 
\end{equation}
Putting this all together, we have shown that for any $t \in [0,T]$,
\begin{equation*}
\mathbb{E}\left[ \|v_{n+1}^1-v_n^1\|_{\infty,t}^p \right] \leq C_{M,p,T} \int_0^t \mathbb{E} \left[ \|v_n^1- v_{n-1}^1 \|_{\infty,s}^p + \|v_n^2-v_{n-1}^2\|_{\infty,s}^p \right] \textrm{d}s.
\end{equation*}
We can repeat these arguments to obtain similar bounds for $v^2$. Together, this gives
$$\mathbb{E}\left[\|v_{n+1}^1-v_n^1\|_{\infty,T}^p+ \|v_{n+1}^2-v_n^2\|_{\infty,T}^p \right] \leq C_{M,p,T} \int_0^T \mathbb{E}\left[\|v_{n+1}^1-v_n^1\|_{\infty,s}^p+ \|v_{n+1}^2-v_n^2\|_{\infty,s}^p \right] \textrm{d}s.$$ 
We can then argue that 
\begin{equation}\begin{split}
\mathbb{E}& \left[\|v_{n+1}^1-v_n^1\|_{\infty,T}^p+ \|v_{n+1}^2-v_n^2\|_{\infty,T}^p \right] \\ \leq & C_{M,p,T} \int_0^T \mathbb{E}\left[\|v_{n+1}^1-v_n^1\|_{\infty,s}^p+ \|v_{n+1}^2-v_n^2\|_{\infty,s}^p \right] \textrm{d}s \\ \leq & C_{M,p,T}^2 \int_0^T \int_0^s \mathbb{E}\left[\|v_{n-1}^1-v_{n-2}^1\|_{\infty,u}^p+ \|v_{n-1}^2-v_{n-2}^2\|_{\infty,u}^p \right] \textrm{d}u \;  \textrm{d}s \\ = & C_{M,p,T}^2 \int_0^T \int_u^T \mathbb{E}\left[\|v_{n-1}^1-v_{n-2}^1\|_{\infty,u}^p+ \|v_{n-1}^2-v_{n-2}^2\|_{\infty,u}^p \right] \textrm{d}s \;  \textrm{d}u \\ = & C_{M,p,T}^2 \int_0^T  \mathbb{E}\left[\|v_{n-1}^1-v_{n-2}^1\|_{\infty,u}^p+ \|v_{n-1}^2-v_{n-2}^2\|_{\infty,u}^p \right](T-u) \;  \textrm{d}u.
\end{split}
\end{equation}
Iterating this, we obtain
\begin{equation}\begin{split}
\mathbb{E}& \left[\|v_{n+1}^1-v_n^1\|_{\infty,T}^p+ \|v_{n+1}^2-v_n^2\|_{\infty,T}^p \right] \\  & \leq C_{M,p,T}^n \int_0^T \mathbb{E}\left[ \|v_1^1-v_0^1\|_{\infty,s}^p + \|v_1^2-v_0^2\|_{\infty,s}^p \right] \frac{(T-s)^{n-1}}{(n-1)!} \textrm{d}s \\ & \leq C_{M,p,T}^n \times \mathbb{E} \left[ \|v_1^1-v_0^1\|_{\infty,T}^p + \|v_1^2 - v_0^2\|_{\infty,T}^p \right] \frac{T^{n}}{n!}.
\end{split}
\end{equation}
Therefore, for $m > n \geq 1$ we have that 
\begin{equation}
\mathbb{E}\left[\|v_{n+1}^1-v_n^1\|_{\infty,T}^p+ \|v_{n+1}^2-v_n^2\|_{\infty,T}^p \right] \leq \sum\limits_{k=n}^{m-1} \left[ \frac{\tilde{C}_{M,p,T}^kT^k}{k!} \right] \mathbb{E}\left[ \|v_1^1-v_0^1\|_{\infty,T}^p + \|v_1^2-v_0^2\|_{\infty,T}^p \right] \rightarrow 0.
\end{equation}
as $m,n \rightarrow \infty$. Hence, the sequence $(v_n^1,v_n^2)$ is Cauchy in the space $L^p(\Omega ;C([0,T] \times [0,1]))^2$ and so converges to some pair $(v^1,v^2)$. We now verify that this is indeed a solution to our evolution equation. Let $\tilde{w}^1$ be given by
\begin{equation}\begin{split}
\tilde{w}^1(t,x)= &\int_0^1 H(t,x,y)v_0^1(y) \textrm{d}y \\ & - \int_0^t \int_0^1 \frac{\partial H}{\partial y}(t-s,x,y) h_M(v^1(s,\cdot), v^2(s,\cdot)) \left[v^1(s,y) \wedge M \right] \textrm{d}y \textrm{d}s \\ & + \int_0^t \int_0^1 H(t-s,x,y)f_1(y,v^1(s,y)) \textrm{d}x\textrm{d}s + \int_0^t \int_0^1 H(t-s,x,y) \sigma_1(y,v^1(s,y)) \textrm{W}(\textrm{d}y,\textrm{d}s).
\end{split}
\end{equation}
Define $\tilde{v}^1= \tilde{w}^1 + \tilde{z}^1$, where $\tilde{z}^1$, together with a measure $\tilde{\eta}^1$, solves our obstacle problem with obstacle $-\tilde{w}^1$. Then, by arguing as before, we see that 
\begin{equation}\begin{split}
\mathbb{E} \left[ \|\tilde{v}^1- v_n^1\|_{\infty,T}^p \right] \leq & C_{M,p,T} \int_0^T \mathbb{E} \left[ \|v^1- v_{n-1}^1\|_{\infty,t}^p+ \|v^2- v_{n-1}^2\|_{\infty,t}^p \right] \textrm{d}t \\ \leq & \tilde{C}_{M,p,T} \mathbb{E} \left[ \|v^1-v^1_{n-1}\|_{\infty,T}^p + \|v^2- v_{n-1}^2\|_{\infty,T}^p\right] \rightarrow 0.
\end{split}
\end{equation} 
It follows that $\tilde{v}^1= v^1$ in $L^p(\Omega; C([0,T] \times [0,1]))$. The same applies to $v^2$, so it follows that the pair $(v^1,v^2)$, together with the reflection measures $(\tilde{\eta}^1, \tilde{\eta}^2)$, do indeed satisfy our problem. 

Uniqueness follows by essentially the same argument. Given two solutions with the same initial data, $(v^1_1,v^2_1)$ and $(v_2^1,v_2^2)$ (together with their reflection measures), we argue as before to obtain that, for $t \in [0,T]$, 
\begin{equation}
\mathbb{E}\left[\|v^1_1-v_2^1\|_{\infty,t}^p+ \|v^2_1-v^2_2\|_{\infty,t}^p \right] \leq \int_0^t \mathbb{E}\left[\|v^1_1-v_2^1\|_{\infty,s}^p+ \|v^2_1-v^2_2\|_{\infty,s}^p \right] \textrm{d}s.
\end{equation}
The equivalence then follows by Gronwall's inequality.
\end{proof}

We are now in position to prove Theorem \ref{Main}. This essentially amounts to showing that the solutions to our truncated problems coincide for different $M$. We use this to  define a candidate function, and then check the conditions for this candidate.  

\begin{proof}[Proof of Theorem \ref{Main}]
For every $M >0$, let $(v^1_M, \eta_M^1, v^2_M,\eta_M^2)$ be the solution to the $M$-truncated problem. Suppose $M_1 \leq M_2$. Then we clearly have that $(v^1_{M_2}, \eta_{M_2}^1, v_{M_2}^2, \eta_{M_2}^2)$ solves the $M_1$-truncated problem in the relative frame, until the stopping time
\begin{equation}
\tilde{\tau}= \inf \left\{ t \geq 0 \;  | \; \|(v^1_{M_2}, v^2_{M_2})\|_{\infty, t} \geq M_1 \right\}.
\end{equation}
We can then argue as in the proof of Proposition \ref{Truncated problem Compact} to deduce that 
\begin{equation}
\mathbb{E} \left[ \|(w^1_{M_1}, w^2_{M_1})-(w_{M_2}^1, w_{M_2}^2)\|_{\infty,\tilde{\tau}}^p \right] =0,
\end{equation}
where, for $i=1,2$, $(w_{M_i}^1,w^2_{M_i})$ are the solutions to the unreflected equations which correspond to $(v^1_{M_1}, \eta_{M_1}^1, v^2_{M_1},\eta_{M_1}^2)$ and $(v^1_{M_2}, \eta_{M_2}^1, v^2_{M_2},\eta_{M_2}^2)$ respectively, as in (\ref{w}). Therefore, by uniqueness of solutions to the obstacle problem, $(v^1_{M_1}, \eta_{M_1}^1, v^2_{M_1},\eta_{M_1}^2)$ and $(v^1_{M_2}, \eta_{M_2}^1, v^2_{M_2},\eta_{M_2}^2)$ agree until the random time $\tilde{\tau}$. This consistency allows us to define our candidate solution in the relative frame, $(v^1,\eta^1, v^2,\eta^2)$, by setting $(v^1,\eta^1, v^2,\eta^2)=(v_M^1,\eta_M^1,v_M^2,\eta_M^2)$ on $[0,\tau_M]$, where 
\begin{equation}\label{localising}
\tau_M = \inf \left\{ t \geq 0 \; | \; \|(v_M^1,v_M^2)\|_{\infty,t} \geq M \right\}. 
\end{equation}
This defines $(v^1,\eta^1, v^2,\eta^2)$ on the interval $[0, \tau)$, where $\tau= \sup\limits_{M >0} \tau_M.$ As a convention, we set $v^1(t,x)=v^2(t,x)=\infty$ for $x \in [0,1]$ and $t \geq \tau$. It is clear from the definition that in fact
\begin{equation}
\tau= \sup\limits_{M >0} \left[ \inf\left\{ t \geq 0 \; | \; \|(v^1,v^2)\|_{\infty,t} \geq M \right\}\right].
\end{equation}
Clearly, $(v^1,\eta^1, v^2,\eta^2)$ is then a maximal solution to the moving boundary problem in the relative frame until the explosion time $\tau$, with localising sequence $\tau_M$. In addition $\tau >0$ almost surely as, by construction, $v^i \in C([0,\tau) \times [0,1])$ for $i=1,2$ almost surely. We now prove uniqueness. If $(v_1^1, \eta^1_1,v_1^2,\eta^2_1)$ and $(v_2^1,\eta_2^1,v_2^2,\eta_2^2)$ are both maximal solutions, they both satisfy the $M$-truncated problem until they exceed $M$ in the infinity norm, and so we can once again argue as in Proposition \ref{Truncated problem Compact} to obtain that they both agree with the unique solution of the $M$-truncated problem until these times. Since this holds for every $M$, it follows that they agree until a common explosion time. We therefore have the result.
\end{proof}

\begin{prop}\label{global existence}
Suppose that $h$ is a bounded function. Then the solution to the moving boundary problem is global.
\end{prop}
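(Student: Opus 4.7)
The plan is to establish an a priori moment bound on the truncated solutions $(v^1_M,v^2_M)$ that is \emph{uniform in the truncation level} $M$, and then use a Markov-type argument to conclude that the blow-up time $\tau=\sup_M\tau_M$ (with $\tau_M$ as in \eqref{localising}) is almost surely infinite.

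First I would fix $T>0$ and some $p>10$, and work with the mild formulation of the truncated problem. Writing $v^1_M=w^1_M+z^1_M$ and $v^2_M=w^2_M+z^2_M$ where $w^i_M$ is the unreflected part as in \eqref{w} and $z^i_M$ is the obstacle correction with obstacle $-w^i_M$, Theorem \ref{Obstacle[0,1]} (applied by comparison with the zero obstacle, whose solution is identically zero) yields $\|z^i_M\|_{\infty,t}\leq\|w^i_M\|_{\infty,t}$ and hence $\|v^i_M\|_{\infty,t}\leq 2\|w^i_M\|_{\infty,t}$. So it suffices to control $\mathbb{E}[\|w^i_M\|_{\infty,t}^p]$ uniformly in $M$.

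Now I would bound each of the four terms in \eqref{w} separately. The initial-data term is deterministic and bounded. For the drift term coming from $f_i$, H\"{o}lder's inequality combined with the linear growth of $f_i$ and $\int_0^1 H(t-s,x,y)\,\mathrm{d}y\leq 1$ gives a bound of the form $C_{p,T}\int_0^t(1+\mathbb{E}[\|v^i_M\|_{\infty,s}^p])\,\mathrm{d}s$. The stochastic integral term is handled by Proposition \ref{finite white noise bound} together with the linear growth of $\sigma_i$, yielding a similar bound. The crucial point, and where the boundedness of $h$ enters, is the moving-boundary term: since $|h_M(v^1_M,v^2_M)|\leq\|h\|_\infty$ uniformly in $M$ and $v^i_M\wedge M\leq v^i_M$, this term is dominated by
\begin{equation*}
\|h\|_\infty\int_0^t\int_0^1\Bigl|\tfrac{\partial H}{\partial y}(t-s,x,y)\Bigr|\,v^i_M(s,y)\,\mathrm{d}y\,\mathrm{d}s,
\end{equation*}
and applying H\"{o}lder's inequality with the heat-kernel derivative estimate from Proposition \ref{Compact Heat Kernel Derivative} (for $q\in(1,10/9)$ so that $\int_0^t r^{-q/2}\,\mathrm{d}r$ is finite) yields a bound of the form $C_{p,T,\|h\|_\infty}\int_0^t\mathbb{E}[\|v^i_M\|_{\infty,s}^p]\,\mathrm{d}s$. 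Combining all four estimates and adding the analogous bound for $w^2_M$ gives
\begin{equation*}
\mathbb{E}\bigl[\|v^1_M\|_{\infty,t}^p+\|v^2_M\|_{\infty,t}^p\bigr]\leq C_{p,T}+C_{p,T}\int_0^t\mathbb{E}\bigl[\|v^1_M\|_{\infty,s}^p+\|v^2_M\|_{\infty,s}^p\bigr]\,\mathrm{d}s,
\end{equation*}
with $C_{p,T}$ independent of $M$.

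Gronwall's inequality then yields a constant $K_{p,T}$, independent of $M$, such that $\mathbb{E}[\|v^1_M\|_{\infty,T}^p+\|v^2_M\|_{\infty,T}^p]\leq K_{p,T}$. By the consistency established in the proof of Theorem \ref{Main} the true solution agrees with $(v^1_M,v^2_M)$ on $[0,\tau_M]$, and by Chebyshev's inequality
\begin{equation*}
\mathbb{P}(\tau_M\leq T)\leq\mathbb{P}\bigl(\|v^1_M\|_{\infty,T}+\|v^2_M\|_{\infty,T}\geq M\bigr)\leq\frac{2^p K_{p,T}}{M^p}\longrightarrow 0
\end{equation*}
as $M\to\infty$. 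Hence $\mathbb{P}(\tau\leq T)=0$ for every $T>0$, so $\tau=\infty$ almost surely and the solution is global. The main technical obstacle is the moving-boundary term, where care is needed to ensure that the constant extracted from the heat-kernel derivative estimate does not blow up; this is precisely the step that uses the boundedness of $h$ to prevent a superlinear feedback that would otherwise drive explosion.
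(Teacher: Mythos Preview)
Your proposal is correct and follows essentially the same approach as the paper: establish a uniform-in-$M$ moment bound on the truncated solutions via the mild form, using the boundedness of $h$ to make the moving-boundary term's constant independent of $M$, then apply Gronwall. The only cosmetic difference is the final step: the paper uses a Fatou/liminf argument to show $\mathbb{E}[\|v^i\|_{\infty,\tau\wedge T}^p]<\infty$ and hence $\tau>T$, while you use Chebyshev on $\mathbb{P}(\tau_M\leq T)$; both are valid and equivalent in spirit.
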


\begin{proof}
Fix $T>0$. Let $(v^1,\eta^1, v^2,\eta^2)$ be the unique maximal solution to the moving boundary problem in the relative frame, and let $\tau$ be the blow-up time for this solution. We consider the solutions to the corresponding truncated solutions, $(v_M^1,v_M^2)$, for $M>0$, with the same initial data i.e. $(v_M^1(0,x),v_M^2(0,x))=(v^1_0(x),v^2_0(x))$.  Let $w^1_M$ solve the SPDE
\begin{equation}
\frac{\partial w^1}{\partial t} = \Delta w^1 - h_M(v^1,v^2) \frac{\partial}{\partial x}(v^1 \wedge M) + f_1(x, v^1) + \sigma_1(x,v^1) \dot{W}
\end{equation}
with Dirichlet boundary conditions $w^1_M(t,0)=w^1_M(t,1)=0$ and initial data $w^1_M(0,x)=v^1_0(x)$. Define $w_M^2$ similarly. Noting that $\|v^1_M\|_{\infty,T} \leq 2\|w_M^1\|_{\infty,T}$ and making use of the mild form for $w_M^1$, we have that
 \begin{equation*}\begin{split}
\mathbb{E}\left[ \|v^1_M\|_{\infty, T}^p \right] & \leq  C_p \sup\limits_{t \in [0,T]} \sup\limits_{x \in [0,1]} \left|\int_0^1 H(t,x,y)v^1_0(y) \textrm{d}y \right|^p \\ & +  C_p\mathbb{E}\left[ \sup\limits_{t \in [0,T]}\sup\limits_{x \in [0,1]} \left| \int_0^t \int_0^1 H(t-s,x,y)f_1(y,v^1_M(s,y)) \textrm{d}y \textrm{d}s \right|^p \right] \\ & + C_p\mathbb{E}\left[\sup\limits_{t \in [0,T]}\sup\limits_{x \in [0,1]} \left| \int_0^t \int_0^1 H(t-s,x,y)\sigma_1(y,v^1_M(s,y))\textrm{W}(\textrm{d}y, \textrm{d}s) \right|^p \right] \\ & + C_p\|h\|_{\infty}^p\mathbb{E}\left[ \sup\limits_{t \in [0,T]}\sup\limits_{x \in [0,1]} \left( \int_0^t \int_0^1 \left| \frac{\partial H}{\partial y}(t-s,x,y) \right| \|v^1_M\|_{\infty,s} \textrm{d}y \textrm{d}s \right)^p \right] 
\end{split}
\end{equation*}
By arguing as in Theorem \ref{Main}, we obtain that for $t \in [0,T]$
\begin{equation*}
\mathbb{E}\left[ \|v^1_M\|_{\infty, t}^p \right] \leq C_{p,T, \|h\|_{\infty}}\left(\|v^1_0\|_{\infty}+ \int_0^t \mathbb{E}\left[ \|v^1_M\|_{\infty, s}^p \right]\textrm{d}s \right).
\end{equation*}
By noting that $C_{p,T,\|h\|_{\infty}}$ and $\|v_0^1\|$ do not depend on $M$ here, we can apply Gronwall's Lemma to obtain that $$\sup\limits_{M>0} \mathbb{E}[\|v^1_M\|_{\infty,T}^p] < \infty.$$ It follows that
\begin{equation*}
\mathbb{E}[\|v^1\|_{\infty, \tau \wedge T}^p]\leq \mathbb{E}\left[\lim\limits_{M \rightarrow \infty} \|v^1_M\|_{\infty, T}^p \right] \leq \liminf\limits_{M \rightarrow \infty}\mathbb{E} \left[ \|v^1_M\|_{\infty, T}^p \right] < \infty.
\end{equation*}
Similarly,$$\mathbb{E}\left[ \|v^2\|_{\infty, \tau \wedge T}^p \right] < \infty.$$ This can only hold if there is almost surely no blow-up before time $T$ i.e. $\tau >T$ almost surely. Since this holds for every $T>0$, we must have that $\tau = \infty$ almost surely. We then also have that 
\begin{equation}
\mathbb{E}\left[ \|v^i\|_{\infty, t}^p \right] < \infty
\end{equation}
for $i=1,2$ and every $t\geq 0$. This allows us to take limits in the localising sequence, so we can obtain that the solution is indeed global.
\end{proof}

\subsection{H\"{o}lder Continuity of the Solutions}

We now prove that, as in the case of the static reflected SPDE, our equations enjoy the expected H\"{o}lder continuity- up to $1/4$-H\"{o}lder in time and up to $1/2$-H\"{o}lder in space. The details of the proof here are a simplification of those used in \cite{Dalang2}, where H\"{o}lder continuity is proved for the equations when there is no moving boundary term. 

The following result is Lemmas 3.1 in \cite{Dalang2}.

\begin{lem}\label{PDE BOund}
Let $V \in C^{1,2}_b([0,T] \times [0,1])$ and $\psi,F \in C([0,T] \times [0,1])$ with $\psi \leq 0$. Suppose that 
\begin{equation}
\frac{\partial V}{\partial t}= \frac{1}{2}V^{\prime \prime} + \psi V + \psi F
\end{equation}
with Dirichlet or Neumann boundary conditions at zero, and zero initial data. Then
\begin{equation}
\|V\|_{T,\infty} \leq \|F\|_{T,\infty}.
\end{equation}
\end{lem}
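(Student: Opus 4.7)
The plan is to prove this via a parabolic maximum principle argument. By the symmetry $V_t = \tfrac{1}{2}V'' + \psi V + \psi F \iff (-V)_t = \tfrac{1}{2}(-V)'' + \psi (-V) + \psi (-F)$, and since $\|-F\|_{T,\infty} = \|F\|_{T,\infty}$, it suffices to establish the one-sided bound $V \leq \|F\|_{T,\infty}$, after which applying the same estimate to $-V$ yields $V \geq -\|F\|_{T,\infty}$.

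To obtain the one-sided bound, set $M := \|F\|_{T,\infty}$, fix $\epsilon > 0$ and introduce the auxiliary function
\begin{equation*}
U(t,x) := V(t,x) - M - \epsilon t.
\end{equation*}
The goal is to show $U \leq 0$ on $[0,T]\times[0,1]$; letting $\epsilon \downarrow 0$ then gives $V \leq M$. At $t=0$ we have $U(0,x) = -M \leq 0$ by the zero initial data, and on any Dirichlet portion of the boundary $U = -M - \epsilon t \leq -M < 0$. Define
\begin{equation*}
t_0 := \inf\bigl\{\,t \in [0,T] \,:\, \max_{x \in [0,1]} U(t,x) = 0\,\bigr\},
\end{equation*}
with the convention $t_0 = +\infty$ if the set is empty, and suppose for contradiction that $t_0 \leq T$.

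By continuity, there is a maximizer $x_0 \in [0,1]$ with $U(t_0, x_0) = 0$, and by the boundary analysis above $x_0$ either lies in the interior $(0,1)$ or on a Neumann boundary. In both cases $U_{xx}(t_0, x_0) \leq 0$: at an interior maximum this is immediate, and on a Neumann boundary the condition $U_x(t_0, x_0) = V_x(t_0, x_0) = 0$ combined with $U(t_0,x_0)$ being a local maximum forces $U_{xx}(t_0,x_0) \leq 0$ via Taylor expansion (using $V \in C^{1,2}_b$). Since $t_0$ is the first time the spatial maximum reaches zero, one also has $U_t(t_0, x_0) \geq 0$. Substituting $V = M + \epsilon t_0$ at $(t_0,x_0)$ into the PDE gives
\begin{equation*}
U_t(t_0,x_0) + \epsilon \;=\; \tfrac{1}{2} U_{xx}(t_0,x_0) + \psi(t_0,x_0)\bigl( M + \epsilon t_0 + F(t_0,x_0) \bigr).
\end{equation*}
The first term on the right is nonpositive, and since $F \geq -M$ we have $M + \epsilon t_0 + F \geq \epsilon t_0 \geq 0$; combined with $\psi \leq 0$ the second term is also nonpositive. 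Thus $U_t(t_0,x_0) \leq -\epsilon < 0$, contradicting $U_t(t_0,x_0) \geq 0$. Hence $U \leq 0$ throughout $[0,T]\times[0,1]$, and sending $\epsilon \to 0$ yields $V \leq M$, as required.

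The only mildly subtle point is the handling of a maximum on a Neumann boundary, which is dispatched by the $C^{1,2}_b$ regularity together with the vanishing first derivative there; once this is observed the argument is a direct comparison using $\psi \leq 0$ and the definition of $M$.
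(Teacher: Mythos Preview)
Your maximum-principle argument is correct. The paper itself does not prove this lemma but cites it from Dalang--Zhang \cite{Dalang2}; the proof there, and the paper's own proof of the analogous half-line statement (Proposition~\ref{PDE BOund1}), proceed instead via a Feynman--Kac representation: one writes
\[
V(t,x) = \mathbb{E}\!\left[ \int_0^t \exp\!\left( \int_0^s \psi_{t-r}(B_r^x)\,\mathrm{d}r \right) \psi_{t-s}(B_s^x)\,F_{t-s}(B_s^x)\,\mathrm{d}s \right],
\]
with $B^x$ absorbed (Dirichlet) or reflected (Neumann) Brownian motion started at $x$, and then observes that $|V(t,x)| \leq \|F\|_{T,\infty}\,\mathbb{E}\bigl[1 - \exp(\int_0^t \psi_{t-r}(B_r^x)\,\mathrm{d}r)\bigr] \leq \|F\|_{T,\infty}$. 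Your direct comparison argument avoids the probabilistic machinery entirely and is the more elementary route on $[0,1]$; conversely, the Feynman--Kac approach is what makes the exponentially weighted version on $[0,\infty)$ in Proposition~\ref{PDE BOund1} a transparent modification, whereas a maximum-principle proof of that weighted estimate would need additional work. One small point: your contradiction step uses $t_0 > 0$ to conclude $U_t(t_0,x_0) \geq 0$; this is guaranteed whenever $M>0$, and the degenerate case $M=0$ (so $F\equiv 0$) is immediate from uniqueness for the linear homogeneous problem.
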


We now present a slight adaptation of Lemma 3.2 in \cite{Dalang2}.

\begin{lem}\label{smoothing}
Suppose that $D= (0,1)$ or $D= (0,\infty)$. Let $f: [0,T] \times D \rightarrow \mathbb{R}$ be such that $f \equiv 0$ on $\partial D$ and for every $t,s \in [0,T]$ and every $x,y \in \bar{D}$
\begin{equation}
|f(t,x)-f(s,y)| \leq K(|t-s|^{\alpha} + |x-y|^{\beta}).
\end{equation}
Then there exists a smooth function $f_{p,q}:[0,T] \times D \rightarrow \mathbb{R}$ such that $f_{p,q} \equiv 0$ on $\partial D$ and
\begin{enumerate}[(i)]
\item $\|f_{p,q}\|_{\infty} \leq \|f\|_{\infty}$.
\item $\|f_{p,q}-f\|_{\infty} \leq C_{\alpha,\beta}K(p^{\alpha}+q^{\beta})$.
\item $\left\| \frac{\partial f_{p,q}}{\partial t} \right\|_{\infty} \leq C_{\alpha, \beta}K p^{\alpha -1}$.
\item $\left\| \frac{\partial f_{p,q}}{\partial x} \right\|_{\infty} \leq C_{\alpha,\beta}K q^{\beta -1}.$ 
\end{enumerate}
\end{lem}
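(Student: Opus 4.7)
The plan is to construct $f_{p,q}$ as the smooth mollification of a suitable extension $\tilde{f}$ of $f$ to $\mathbb{R}\times\mathbb{R}$. In the spatial direction I extend by odd reflection: for $D=(0,\infty)$, set $\tilde{f}(t,-x):=-f(t,x)$ for $x>0$; for $D=(0,1)$, extend by odd reflection across $0$ and $1$ to obtain a $2$-periodic function on $\mathbb{R}$. In the time direction I extend $\tilde{f}$ to $\mathbb{R}$ by even reflection across $t=0$ and $t=T$. Then I fix $\rho\in C_c^\infty(\mathbb{R})$, non-negative and even with $\operatorname{supp}\rho\subseteq[-1,1]$ and $\int_\mathbb{R}\rho=1$, set $\rho_p(s):=p^{-1}\rho(s/p)$ and $\rho_q(y):=q^{-1}\rho(y/q)$, and define
\begin{equation*}
f_{p,q}(t,x):=\int_\mathbb{R}\int_\mathbb{R}\rho_p(t-s)\,\rho_q(x-y)\,\tilde{f}(s,y)\,ds\,dy,\qquad (t,x)\in[0,T]\times D.
\end{equation*}
Since the integrand is smooth and compactly supported in $(s,y)$, $f_{p,q}$ is smooth on $[0,T]\times D$.

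The Dirichlet boundary condition is automatic: for $x\in\partial D$, the map $y\mapsto\tilde{f}(s,y)$ is odd about $x$ and $\rho_q(x-\cdot)$ is even, so the $y$-integral vanishes. Property (i) follows from $\rho_p,\rho_q\geq 0$ with unit total mass, giving $\|f_{p,q}\|_\infty\leq\|\tilde{f}\|_\infty=\|f\|_\infty$. For (ii), writing $f(t,x)=\iint\rho_p(t-s)\rho_q(x-y)f(t,x)\,ds\,dy$ and using the H\"older estimate on the integrand, restricted to the support $|t-s|\leq p$, $|x-y|\leq q$,
\begin{equation*}
|f_{p,q}(t,x)-f(t,x)|\leq\iint\rho_p(t-s)\,\rho_q(x-y)\,|\tilde{f}(s,y)-f(t,x)|\,ds\,dy\leq C_{\alpha,\beta}K(p^\alpha+q^\beta).
\end{equation*}

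For (iii), I differentiate under the integral and exploit $\int_\mathbb{R}\rho_p'(r)\,dr=0$ to subtract $\tilde{f}(t,y)$:
\begin{equation*}
\frac{\partial f_{p,q}}{\partial t}(t,x)=\iint\rho_p'(t-s)\,\rho_q(x-y)\,[\tilde{f}(s,y)-\tilde{f}(t,y)]\,ds\,dy.
\end{equation*}
Using the H\"older property in $t$ and $|\rho_p'(r)|\leq p^{-2}\|\rho'\|_\infty\mathbf{1}_{|r|\leq p}$, the change of variables $r=s/p$ gives $\int_\mathbb{R}|\rho_p'(r)||r|^\alpha\,dr=p^{\alpha-1}\int|\rho'(u)||u|^\alpha\,du$, whence $\|\partial_t f_{p,q}\|_\infty\leq C_\alpha K p^{\alpha-1}$. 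Property (iv) is analogous: differentiate $\rho_q$, subtract $\tilde{f}(s,x)$ using $\int\rho_q'=0$, and use H\"older continuity in $x$.

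The one non-routine step is checking that the extensions preserve the H\"older constants up to a fixed factor. For points on the same side of every reflection point this is obvious; the interesting case is, e.g., $x<0<y$ with $D=(0,\infty)$, where the boundary condition $f(t,0)=0$ and the subadditivity $a^\beta+b^\beta\leq 2(a+b)^\beta$ for $\beta\in(0,1]$ give
\begin{equation*}
|\tilde{f}(t,x)-\tilde{f}(t,y)|=|f(t,-x)+f(t,y)|\leq K(|x|^\beta+|y|^\beta)\leq 2K|x-y|^\beta.
\end{equation*}
The $D=(0,1)$ case follows by iterating this across both endpoints of the $2$-periodic odd reflection, and the even time-extension preserves the $\alpha$-H\"older constant similarly. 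Once these extension estimates are in place, the constants in (ii)--(iv) absorb into $C_{\alpha,\beta}$ and all the remaining bounds are standard mollification calculations.
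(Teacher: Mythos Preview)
Your proof is correct and is essentially the paper's approach in explicit form. The paper smooths with the Dirichlet heat kernel on $D$, which by the method of images is precisely odd reflection across $\partial D$ followed by convolution with the Gaussian; you have merely replaced the Gaussian by a compactly supported even bump, which makes the support and scaling arguments slightly cleaner but changes nothing structurally.
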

\begin{proof}
The proof is as in \cite{Dalang2}, replacing the use of the heat kernel on $\mathbb{R}$ to smooth $f$ with the Dirichlet heat kernel on $D$.
\end{proof}

We now present the result regarding the H\"{o}lder continuity of our solutions. In addition to allowing for the extra term in the equation, corresponding to the moving boundary, our proof here slightly differs from the approach used in \cite{Dalang2} in another way. In \cite{Dalang2}, the solution to the obstacle problem 
\begin{equation}\label{staticobstacle}
\frac{\partial u}{\partial t}= \Delta u + f(x,u) + \sigma(x,u) \dot{W} + \eta
\end{equation}
is approximated by the solutions to the solutions of the penalised SPDEs

\begin{equation}\label{staticpenalise}
\frac{\partial u_{\epsilon}}{\partial t}= \Delta u_{\epsilon} + f(x,u_{\epsilon}) + \sigma(x,u_{\epsilon}) \dot{W} + g_{\epsilon}(u_{\epsilon}),
\end{equation}
where $g_{\epsilon}(x)= \frac{1}{\epsilon} \arctan( [x \wedge 0]^2)$. H\"{o}lder continuity of the solution to (\ref{staticobstacle}) is then shown by uniformly controlling the H\"{o}lder continuity of the equations (\ref{staticpenalise}). Here, we instead approximate $u$ by the solutions to the equations 

\begin{equation}\label{staticpenalise2}
\frac{\partial u_{\epsilon}}{\partial t}= \Delta u_{\epsilon} + f(x,u) + \sigma(x,u) \dot{W} + g_{\epsilon}(u_{\epsilon}).
\end{equation}
By using $u$ in the coefficients of our approximating SPDEs $f$ and $\sigma$ here, we limit the problem of uniformly controlling the H\"{o}lder coefficients to studying the deterministic obstacle problem.

\begin{thm}\label{Holder Continuity}
For $i=1,2$, let $u_0^i$ be such that $u_0^i \circ (\Theta_{p_0}^i)^{-1} \in C_0((0,1))^+ \cap C^{\gamma/2}([0,1])$ for every $\gamma \in (0,1)$. Then, for every $T>0$, $M>0$ and every $\gamma \in (0,1)$ the solution $(v^1_M,v^2_M)$ to the $M$-truncated problem in the relative frame with initial data $(u_0^1 \circ (\Theta_{p_0}^1)^{-1}, u_0^2 \circ (\Theta_{p_0}^2)^{-1})$, described by equations (\ref{truncatedc b}) and (\ref{truncatedc a}), is $\gamma/4$-H\"{o}lder in time and $\gamma/2$-H\"{o}lder in space on $[0,T] \times [0,1]$. In particular, if $(u^1,\eta^1,u^2,\eta^2,p)$ is the solution to our moving boundary problem with initial data $(u_0^1, u_0^2, p_0)$, then $(u^1,u^2)$ enjoys the same H\"{o}lder regularity locally until the blow-up time, $\tau$.
\end{thm}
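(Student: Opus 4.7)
By the Picard scheme of Proposition \ref{Truncated problem Compact}, decompose $v^i_M = w^i + z^i$, where $w^i$ is the mild stochastic integral obtained by plugging the already-constructed $(v^1_M, v^2_M)$ into the coefficients and moving-boundary drift of \eqref{truncatedc b} (respectively \eqref{truncatedc a}), and $z^i$ together with a reflection measure solves the deterministic obstacle problem on $[0,1]$ with obstacle $-w^i$. By symmetry we focus on $i=1$; the argument for $i=2$ is identical after switching $\dot W$ with $\dot W^-$ and flipping the sign of the drift coming from the boundary. To establish the $(\gamma/4,\gamma/2)$-H\"older regularity of $w^1$, I would compute moments $\mathbb{E}[|w^1(t,x)-w^1(s,y)|^p]$ for large $p$ and apply the Kolmogorov criterion (Corollary A.3 of \cite{Dalang}, as used in Proposition \ref{finite white noise bound}). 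The initial-data term inherits $C^{\gamma/2}$ in space from $u_0^1 \circ (\Theta_{p_0}^1)^{-1}$ and $\gamma/4$ in time from the heat semigroup. The drift and the stochastic integral are standard, using Proposition \ref{HeatKernelEstimatesCompact} and Proposition \ref{finite white noise bound}. The new contribution is the moving-boundary term, which uses boundedness of $h_M$ (from condition (III) and the $M$-cutoff), $|v \wedge M| \leq M$, and the integrability estimates for $|\partial_y H|$ from Proposition \ref{Compact Heat Kernel Derivative}.

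To transfer the regularity from $w^1$ to $z^1$, I would follow the modified penalization scheme sketched in the preamble: define $z^1_\epsilon$ as the classical solution to the \emph{deterministic} parabolic PDE
\begin{equation*}
\frac{\partial z^1_\epsilon}{\partial t} = \Delta z^1_\epsilon + g_\epsilon(w^1 + z^1_\epsilon), \qquad z^1_\epsilon(0,\cdot) \equiv 0, \quad z^1_\epsilon(t,0)=z^1_\epsilon(t,1)=0,
\end{equation*}
with $g_\epsilon(x)=\epsilon^{-1}\arctan([x \wedge 0]^2)$, so that $z^1_\epsilon \to z^1$ uniformly as $\epsilon \to 0$. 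For a time increment $h > 0$, the difference $Z_\epsilon(t,x):=z^1_\epsilon(t+h,x)-z^1_\epsilon(t,x)$ satisfies
\begin{equation*}
\frac{\partial Z_\epsilon}{\partial t} = \Delta Z_\epsilon + \psi_\epsilon Z_\epsilon + \psi_\epsilon F, \qquad F(t,x):=w^1(t+h,x)-w^1(t,x),
\end{equation*}
with $\psi_\epsilon = g_\epsilon'(\xi_\epsilon) \leq 0$ by the mean value theorem. After mollifying $F$ via Lemma \ref{smoothing} to achieve the $C^{1,2}$-regularity required by Lemma \ref{PDE BOund}, and handling the initial-layer contribution $Z_\epsilon(0,\cdot)=z^1_\epsilon(h,\cdot)$ (whose sup-norm is $O(h^{\gamma/4})$ by Step 1 combined with the deterministic obstacle estimate \eqref{Obstacle[0,1]}), Lemma \ref{PDE BOund} yields $\|Z_\epsilon\|_{\infty,T} \leq C h^{\gamma/4}$ uniformly in $\epsilon$. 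An analogous argument treats spatial increments using the $\gamma/2$-H\"older norm of $w^1$ in space; sending $\epsilon \to 0$, $z^1$ inherits the regularity, and hence so does $v^1_M$. The final assertion for the moving-boundary problem follows from the identification of $(v^1_M,v^2_M)$ with the moving-boundary solution in the relative frame on $[0,\tau_M]$ from the proof of Theorem \ref{Main}, together with the $C^{1,\gamma/4}$-regularity of $p(t)$ inherited from the just-proved H\"older continuity via the Lipschitz condition (IV) on $h$.

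The crux of the proof is the uniform-in-$\epsilon$ H\"older control on $z^1_\epsilon$ in Step 2. The obstacle estimate \eqref{Obstacle[0,1]} only yields sup-norm control of $z^1$ by $w^1$, which does not transfer H\"older regularity directly. The essential simplification relative to the penalization argument in \cite{Dalang2} --- where the penalized equation is itself an SPDE with coefficients $f, \sigma$ evaluated at $u_\epsilon$ --- is that here, by freezing $(v^1, v^2)$ inside $f_1$, $\sigma_1$ and $h_M$, the equation governing $z^1_\epsilon$ is a purely deterministic parabolic PDE with monotone nonlinearity, so Lemma \ref{PDE BOund} applies cleanly to its increments.
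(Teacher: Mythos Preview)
Your overall architecture matches the paper's: decompose $v^1_M=w^1+z^1$, get $(\gamma/4,\gamma/2)$-H\"older bounds on $w^1$ by Kolmogorov using Propositions~\ref{HeatKernelEstimatesCompact} and~\ref{HeatDerivativeBoundsCompact}, then transfer regularity to $z^1$ via the penalised problem and Lemma~\ref{PDE BOund}. The time-increment part of your Step~2 is essentially sound: $Z_\epsilon(t,x)=z^1_\epsilon(t+h,x)-z^1_\epsilon(t,x)$ does satisfy Dirichlet conditions, and the initial layer $z^1_\epsilon(h,\cdot)$ can indeed be shown to be $O(h^{\gamma/4})$ by comparing with the obstacle $(-w^1)\wedge 0$ (whose penalised solution vanishes) and using $w^1(0,\cdot)=v^1_0\geq 0$ together with the time-H\"older bound on $w^1$.

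The genuine gap is in your spatial step. On the bounded interval $[0,1]$ the shifted increment $Z_\epsilon(t,x)=z^1_\epsilon(t,x+k)-z^1_\epsilon(t,x)$ does \emph{not} satisfy either Dirichlet or Neumann conditions at the endpoints: $Z_\epsilon(t,0)=z^1_\epsilon(t,k)$ and $Z_\epsilon(t,1-k)=-z^1_\epsilon(t,1-k)$, neither of which is zero in general. Lemma~\ref{PDE BOund} therefore does not apply, and bounding these boundary values by $Ck^{\gamma/2}$ is exactly the spatial H\"older regularity you are trying to prove, so the argument is circular. This is precisely why the paper (following \cite{Dalang2}) does not take finite increments but instead first smooths the obstacle $w^1\mapsto (w^1)_{p,q}$ via Lemma~\ref{smoothing}, so that the corresponding $z^\epsilon_{p,q}$ is classically differentiable, and then differentiates equation~\eqref{obstaclesmooth} in $x$: the resulting equation for $\beta^\epsilon_{p,q}=\partial_x z^\epsilon_{p,q}$ carries a \emph{Neumann} boundary condition (since $z^\epsilon_{p,q}(t,0)=0$ forces $\partial_t z^\epsilon_{p,q}(t,0)=0$, whence $\partial_x^2 z^\epsilon_{p,q}(t,0)=0$ from the equation), and Lemma~\ref{PDE BOund} gives $\|\beta^\epsilon_{p,q}\|_\infty\leq C\|\partial_x (w^1)_{p,q}\|_\infty$, which Lemma~\ref{smoothing}(iv) controls by $Cq^{\gamma/2-1}$. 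Choosing $q=|x-y|$ then yields the spatial H\"older estimate.

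A secondary point: your remark ``mollifying $F$ via Lemma~\ref{smoothing} to achieve the $C^{1,2}$-regularity required by Lemma~\ref{PDE BOund}'' is misdirected. Mollifying the forcing $F$ in the linear equation for $Z_\epsilon$ does not make $Z_\epsilon$ itself $C^{1,2}$; what is needed (and what the paper does) is to mollify $w^1$ \emph{inside the penalised equation}, so that the penalised solution becomes smooth and one can legitimately differentiate it and apply Lemma~\ref{PDE BOund} to the derivative.
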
 
\begin{proof}
We consider $v^1_M$ only, since the argument for $v^2_M$ is identical. Define $w^1_M$ to be the $C_0((0,1))$-valued process given by
\begin{equation}\begin{split}
w^1_M(t,x)= &\int_0^1 H(t,x,y)u_0^1((\Theta_{p_0}^1)^{-1}(y)) \textrm{d}y \\ &  - \int_0^t \int_0^1 \frac{\partial H}{\partial y}(t-s,x,y)h_M(v^1_M(s,.), v^2_M(s,.))(v^1_M(s,y) \wedge M) \textrm{d}y \textrm{d}s \\ & + \int_0^t \int_0^1 H(t-s,x,y)f_1(s,v^1_M(s,y)) \textrm{d}y \textrm{d}s \\ &  + \int_0^t \int_0^1 H(t-s,x,y)\sigma_1(s,v^1_M(s,y)) \textrm{W}(\textrm{d}y, \textrm{d}s).
\end{split}
\end{equation}
Let $r=2\gamma+12$, so that $\gamma=r/2 -6$. By applying the inequalities from Propositions \ref{HeatKernelEstimatesCompact} and \ref{HeatDerivativeBoundsCompact} together with Burkholder's inequality, we see that 
\begin{equation}
\mathbb{E}\left[|w^1_M(t,x)-w^1_M(s,y)|^r \right] \leq C(|t-s|^{1/2}+|x-y| )^{\frac{r}{2}-2}.
\end{equation}
We note that it is Proposition \ref{HeatDerivativeBoundsCompact} which allows us to control the extra term arising due to the moving boundary. It then follows by Corollary A.3 in \cite{Dalang} that there exists a random variable $X \in L^r$ such that
\begin{equation}
|v^1(t,x)-v^1(s,y)|^r \leq X(|t-s|^{1/2}+|x-y| )^{\frac{r}{2}-6}= X(|t-s|^{1/2}+|x-y| )^{\gamma}
\end{equation}
almost surely. From here, the argument follows the steps from Theorem 3.3 in \cite{Dalang2}, so we give an outline only and refer the reader to the proof of Theorem 3.3 in \cite{Dalang2} for further details. For each $\epsilon >0$, let $z^{\epsilon}$ solve the PDE 
\begin{equation}\label{obstacle}
\frac{\partial z^{\epsilon}}{\partial t}= \Delta z^{\epsilon}+ g_{\epsilon}(z^{\epsilon} + v^1)
\end{equation}
on $[0,1]$ with Dirichlet boundary conditions and zero initial data, where we once again define $$g_{\epsilon}(x):= \frac{1}{\epsilon} \arctan([x \wedge 0]^2).$$  We then have (see \cite{NP}) that $z^{\epsilon}+v$ increases to $u$, the solution of the reflected SPDE on $[0,1]$. Let $(v^1)_{p,q}$ be a smoothing of $v^1$ as in Proposition \ref{smoothing}, with respect to the random variable $X$, the H\"{o}lder coefficients $\gamma/2$ and $\gamma/4$, and the constants $p$, $q$ which are yet to be determined. Define $z^{\epsilon}_{p,q}$ to be the solution of the PDE
\begin{equation}\label{obstaclesmooth}
\frac{\partial z^{\epsilon}_{p,q}}{\partial t}= \Delta z^{\epsilon}_{p,q}+ g_{\epsilon}(z^{\epsilon}_{p,q} + (v^1)_{p,q})
\end{equation}
with Dirichlet boundary condition at zero and and zero initial data. We then have that (see the proof of Theorem 1.4 in \cite{NP} for details)
\begin{equation}
\|z^{\epsilon}\|_{T, \infty} \leq \|v^1\|_{T, \infty},
\end{equation}
and
\begin{equation}
\|z^{\epsilon}_{p,q}\|_{T, \infty} \leq \|(v^1)_{p,q}\|_{T, \infty}.
\end{equation}
Define $\alpha_{p,q}^{\epsilon} := \frac{\partial w_{p,q}^{\epsilon}}{\partial t}$ and $\beta_{p,q}^{\epsilon} := \frac{\partial w_{p,q}^{\epsilon}}{\partial x}$. By differentiating the equation (\ref{obstaclesmooth}) in time we obtain 
\begin{equation}\label{111}
\frac{\partial \alpha_{p,q}^{\epsilon}}{\partial t}= \Delta \alpha_{p,q}^{\epsilon} + g^{\prime}_{\epsilon}(z^{\epsilon,a,b} + (v^1)_{p,q})\left[\alpha_{p,q}^{\epsilon} + \frac{\partial(v^1)_{p,q}}{\partial t}  \right], 
\end{equation} 
with zero initial data and Dirichlet boundary conditions $\alpha_{p,q}^{\epsilon}(t,0)=\alpha_{p,q}^{\epsilon}(t,1)=0$. Similarly, if we differentiate (\ref{obstaclesmooth}) in space 
\begin{equation}\label{222}
\frac{\partial \beta_{p,q}^{\epsilon}}{\partial t}= \Delta \beta_{p,q}^{\epsilon} + g^{\prime}_{\epsilon}(z^{\epsilon}_{p,q} + (v^1)_{p,q})\left[\beta_{p,q}^{\epsilon} + \frac{\partial(v^1)_{p,q}}{\partial x}  \right], 
\end{equation} 
with initial data $z_0^{\prime}=0$ and Neumann boundary conditions $\frac{\partial  \beta_{p,q}^{\epsilon}}{\partial x}(t,0)=\frac{\partial  \beta_{p,q}^{\epsilon}}{\partial x}(t,1)=0$ . Applying Lemma \ref{PDE BOund} to equations (\ref{111}) and (\ref{222}) controls the infinity norms of $\alpha^{\epsilon}_{p,q}$ and $\beta^{\epsilon}_{p,q}$ by the infinity norms of $\frac{\partial (v^1)_{p,q}}{\partial t}$ and $\frac{\partial (v^1)_{p,q}}{\partial x}$ respectively, uniformly over $\epsilon$. By using the bounds from Lemma \ref{smoothing} and choosing $p=|t-s|$, $q=|x-y|$, we can then obtain uniform control the $\gamma/4$-H\"{o}lder norm in time and the $\gamma/2$-H\"{o}lder norm in space. Letting $\epsilon \downarrow 0$ then allows us to conclude.
\end{proof}

\begin{cor}\label{HolderPrice}
For every $\gamma \in (0,1)$, the derivative of the boundary is locally $\gamma/4$-H\"{o}lder continuous on $[0, \tau)$, where $\tau$ is the blow-up time.
\end{cor}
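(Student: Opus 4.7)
The plan is to derive this essentially as an immediate consequence of Theorem \ref{Holder Continuity} combined with the Lipschitz hypothesis (IV) on $h$. Since $p'(t) = h(v^1(t,\cdot), v^2(t,\cdot))$ where $(v^1, v^2)$ is the solution in the relative frame, the regularity of $p'$ in time reduces to the regularity of $t \mapsto v^i(t,\cdot)$ as a map into $C_0((0,1))$ equipped with the sup norm.

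First I would fix $T < \tau$ and $\gamma \in (0,1)$, and work on $[0,T]$. On this interval, $(v^1, v^2)$ coincides with the truncated solution $(v^1_M, v^2_M)$ for $M$ chosen large enough that $\tau_M > T$, where $\tau_M$ is the localising sequence from \eqref{localising}. By Theorem \ref{Holder Continuity} applied to this $(v^1_M, v^2_M)$, there is a random constant $C = C(T, M, \gamma)$ such that for $i = 1,2$ and all $s, t \in [0, T]$, $x \in [0,1]$,
\begin{equation*}
|v^i(t, x) - v^i(s, x)| \leq C |t - s|^{\gamma/4}.
\end{equation*}
Taking the supremum in $x$ yields $\|v^i(t,\cdot) - v^i(s,\cdot)\|_\infty \leq C|t-s|^{\gamma/4}$; this is the key observation, namely that joint H\"older continuity on $[0,T] \times [0,1]$ delivers H\"older continuity uniformly in $x$.

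Next I would apply the Lipschitz condition (IV) on $h$ to write
\begin{equation*}
|p'(t) - p'(s)| = |h(v^1(t,\cdot), v^2(t,\cdot)) - h(v^1(s,\cdot), v^2(s,\cdot))| \leq K\bigl(\|v^1(t,\cdot) - v^1(s,\cdot)\|_\infty + \|v^2(t,\cdot) - v^2(s,\cdot)\|_\infty\bigr) \leq 2KC |t - s|^{\gamma/4}.
\end{equation*}
Thus $p'$ is $\gamma/4$-H\"older on $[0, T]$. Since $T < \tau$ was arbitrary and this covers an arbitrary compact subinterval of $[0, \tau)$, we conclude that $p'$ is locally $\gamma/4$-H\"older continuous on $[0, \tau)$.

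There is no genuine obstacle here; the only thing to be careful about is that the H\"older constant in Theorem \ref{Holder Continuity} is uniform in the spatial variable, which is already built into the statement of joint H\"older continuity. Everything else is a direct application of (IV).
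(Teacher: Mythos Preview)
Your proposal is correct and follows essentially the same approach as the paper: both localise via the truncated solutions $(v^1_M, v^2_M)$, apply the Lipschitz condition (IV) on $h$, and invoke Theorem \ref{Holder Continuity} to transfer the time-H\"older regularity of $v^i$ to $p'$. The only cosmetic difference is that the paper works directly on the stochastic intervals $[0,\tau_M]$ and lets $M \to \infty$, whereas you fix a deterministic $T < \tau$ pathwise and choose $M$ accordingly; these are equivalent formulations of the same localisation.
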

\begin{proof}
Fix $\gamma \in (0,1)$. Recall that $p^{\prime}(t)= h(v^1(t, \cdot), v^2(t,\cdot))$, where $v^1(t,x)= u^1(t,p(t)- \cdot)$ and $v^2(t,x)=u^2(t,p(t)+ \cdot)$. 
For $M > 0$, define 
\begin{equation}
\tau_M = \inf \left\{ t \geq 0 \; | \; \|(v^1,v^2)\|_{\infty,t} \geq M \right\}. 
\end{equation}
Note that $\tau_M \uparrow \tau$ as $M \rightarrow \infty$. Let $t, s \in [0,\tau_M]$.
We have, by the Lipschitz property of $h$, that
\begin{equation}\begin{split}
|p^{\prime}(t)- p^{\prime}(s)| & \leq K \left( \|v^1(t, \cdot) - v^1(s, \cdot)\|_{\infty} + \|v^2(t, \cdot) - v^2(s, \cdot)\|_{\infty} \right) \\ & =K \left( \|v^1_M(t, \cdot) - v^1_M(s, \cdot)\|_{\infty} + \|v^2_M(t, \cdot) - v^2_M(s, \cdot)\|_{\infty} \right). 
\end{split}
\end{equation}
The result then follows by Theorem \ref{Holder Continuity}.
\end{proof}

\section{The Moving Boundary Problem on Semi-Infinite Intervals in the Relative Frame}

We now consider the analogous obstacle problem, where the two sides of the equation satisfy SPDEs on the infinite halflines $(-\infty,p(t)]$ and $[p(t),\infty)$ respectively. That is 

\begin{equation*}
\frac{\partial u^1}{\partial t}= \Delta u^1 + f_1(p(t)-x, u^1(t,x))+ \sigma_1(p(t)-x, u^1(t,x))\dot{W} + \eta^1
\end{equation*}
on $[0,\infty) \times (- \infty,p(t)]$, and 
\begin{equation*}
\frac{\partial u^2}{\partial t}= \Delta u^2 + f_2(x-p(t),u^2(t,x))+ \sigma_2(x-p(t),u^2(t,x))\dot{W} + \eta^2,
\end{equation*}
on $[0,\infty) \times [p(t), \infty)$. We once again have Dirichlet conditions at the mid, $p(t)$, so that $u^1(t,p(t))=u^2(t,p(t))=0$, with the point $p(t)$ evolving according to the equation 
\begin{equation*}
p'(t)= h(u^1(t,p(t)-\cdot), u^2(t,p(t)+ \cdot )).
\end{equation*}
Here, $W$ is a space-time white noise and $h$ is a function of the two profiles of the equation on either side of the shared boundary. As before, $\eta^1$ and $\eta^2$ are reflection measures for the functions $u^1$ and $u^2$ respectively, keeping the profiles positive and satisfying the conditions 
\begin{enumerate}[(i)]
\item $\textrm{supp}(\eta^1) \subset \left\{ (t,x) \; | \;  x \in (\infty,p(t)) \right\}$,
\item $\textrm{supp}(\eta^2) \subset \left\{ (t,x) \; | \;  x \in (p(t), \infty) \right\}$,
\item $\int_0^{\infty} \int_{\mathbb{R}} u^1(t,x) \;  \eta^1(\textrm{d}t,\textrm{d}x)=0,$
\item $\int_0^{\infty} \int_{\mathbb{R}} u^2(t,x) \;  \eta^2(\textrm{d}t,\textrm{d}x)=0.$
\end{enumerate} 

\subsection{Formulation of the Problem}

We will be working in the spaces $\mathscr{C}_r$ and $\mathscr{C}_r^T$, defined in Section 2.2, throughout this section. This presents issues when handling both the non-Lipschitz term arising due to the moving boundary and the stochastic term. Truncating the boundary term requires more care, as we are now trying to control the $\mathscr{C}_r^T$-norm of the process. We are also unable to suitably control the supremum of the stochastic terms using our previous arguments, as they are not well suited to unbounded domains. For this reason, we introduce extra decay for the growth of the volatility relative to the growth of the drift term. Fixing $r \in \mathbb{R}$ we take, for $i=1,2$, $f_i$ and $\sigma_i$ to be measurable mappings from $[0,\infty) \times \mathbb{R}^+ \rightarrow \mathbb{R}$ and $h: \mathscr{C}_r \times \mathscr{C}_r \rightarrow \mathbb{R}$ to be a measurable function such that, for some $C, \delta>0$
\begin{enumerate}[(I)]
\item For every $x \in [0,\infty)$, $u,v \in \mathbb{R}$, 
\begin{equation*}
|f_i(x,u)-f_i(x,v)| \leq C |u-v|.
\end{equation*}
\item For every $x \in [0,\infty)$, $u \in \mathbb{R}$, $$|f_i(x,u)| \leq C(e^{rx}+ |u|).$$
\item For every $x \in [0,\infty)$, $u,v \in \mathbb{R}$, 
\begin{equation*}
|\sigma_i(x,u)-\sigma_i(x,v)| \leq C e^{-\delta x}|u-v|.
\end{equation*}
\item For every $x \in [0,\infty)$, $u \in \mathbb{R}$, $$|\sigma_i(x,u)| \leq Re^{-\delta x}(e^{rx}+|u|).$$
\item $h$ is bounded on bounded sets in $\mathscr{C}_r$.
\item For every $u_1,u_2,v_1,v_2 \in \mathscr{C}_r$, $$|h(u_1,v_1)-h(u_2,v_2)| \leq K( \|u_1- u_2\|_{\mathscr{C}_{r}}+ \|v_1-v_2\|_{\mathscr{C}_{r}}).$$
\end{enumerate}
Since our notion of solution here is motivated by the same ideas as in the compact case, we move straight to the definitions for solutions to non-linear SPDEs and moving boundary problems on $\mathbb{R}$.

\begin{defn}

Let $(\Omega, \mathscr{F}, \mathscr{F}_t,\mathbb{P})$ be a complete filtered probability space. Let $\dot{W}$ be a space time white noise on this space which respects the filtration $\mathscr{F}_t$. Suppose that $\tilde{v}$ is a continuous $\mathscr{F}_t$-adapted process taking values in $\mathscr{C}_r$. Let $h: \mathscr{C}_r \times \mathscr{C}_r \rightarrow \mathscr{C}_r$ and $F:\mathscr{C}_r \rightarrow \mathscr{C}_r$ be Lipschitz functions. For the $\mathscr{F}_t$-stopping time $\tau$, we say that the pair $(v, \eta)$ is a local $\mathscr{C}_r$-valued solution to the reflected SPDE 

\begin{equation*}
\frac{\partial v}{\partial t}= \Delta v + h(v,\tilde{v}) \frac{\partial F(v)}{\partial x} + f(x, v) + \sigma(x,v) \dot{W} + \eta
\end{equation*}
with Dirichlet boundary condition $v(t,0)=0$ and initial data $v_0 \in \mathscr{C}_r^+$ with $v_0(0)=0$, until time $\tau$, if
\begin{enumerate}[(i)]
\item For every $x \geq 0$ and every $t \geq 0$,  $v(x,t)$ is $\mathscr{F}_t$- measurable.
\item $v \geq 0$ almost surely.
\item $v \big|_{[0,t] \times [0,\infty)} \in \mathscr{C}_r^t$ for every $t < \tau$ almost surely.
\item $v(t,x)= \infty$ for every $t \geq \tau$ almost surely.
\item $\eta$ is a measure on $[0,\infty) \times [0,\infty)$ such that
\begin{enumerate}
\item For every measurable map $\psi: [0,\infty) \times [0,\infty) \rightarrow \mathbb{R}$, 
\begin{equation}
\int_0^{\infty} \int_0^{\infty} \psi(x,s) \; \eta(\textrm{d}x,\textrm{d}s)
\end{equation}
is $\mathscr{F}_t$-measurable.
\item $\int_0^{\infty} \int_0^{\infty} v(t,x) \; \eta(\textrm{dx,dt})=0$.
\end{enumerate} 
\item There exists a localising sequence of stopping times $\tau_n \uparrow \tau$ almost surely, such that for every  $\varphi \in C^{1,2}_c([0,\infty) \times [0,\infty))$ with $\varphi(s,0)=0$, and for every $t \geq 0$, 
\begin{equation}\label{weak2}
\begin{split}
\int_0^{\infty} v(t \wedge \tau_n ,x) \varphi(t \wedge \tau_n,x) \textrm{d}x= & \int_0^{\infty} v(0,x) \varphi(0,x) \textrm{d}x +  \int_0^{t \wedge \tau_n} \int_0^{\infty} v(s,x) \frac{\partial \varphi}{\partial t}(s,x)\textrm{d}x\textrm{d}s \\ & +  \int_0^{t \wedge \tau_n} \int_0^{\infty} v(s,x) \frac{\partial^2 \varphi}{\partial x^2}(s,x) \textrm{d}x\textrm{d}s \\ & - \int_0^{t \wedge \tau_n} \int_0^{\infty} F(v(s,\cdot))(x)h(v(s,\cdot),\tilde{v}(s,\cdot)) \frac{\partial \varphi}{\partial x}(s,x) \textrm{d}x\textrm{d}s\\ & + \int_0^{t \wedge \tau_n} \int_0^{\infty}  f(x,v(s,x)) \varphi(s,x) \textrm{d}x\textrm{d}s \\ & + \int_0^{t \wedge \tau_n} \int_0^{\infty}  \sigma(x,v(s,x)) \varphi(s,x)W(\textrm{d}x,\textrm{d}s) \\ & + \int_0^{t \wedge \tau_n} \int_0^{\infty}  \varphi(s,x) \; \eta(ds,dx).
\end{split}
\end{equation}
almost surely.
\end{enumerate}
\end{defn}
Similarly to as in Section 3, we say that a local $\mathscr{C}_r$-valued solution is \emph{maximal} if it cannot be extended to a $\mathscr{C}_r$-valued solution on a larger stochastic interval, and we say that a local solution is \emph{global} if we can take $\tau_n = \infty$ in (\ref{weak2}).

\begin{defn}
Let $(\Omega, \mathscr{F}, \mathscr{F}_t,\mathbb{P})$ be a complete filtered probability space. Let $\dot{W}$ be a space time white noise on this space which respects the filtration $\mathscr{F}_t$. We say that the quintuple $(u^1, \eta^1, u^2, \eta^2, p)$ is a local solution to the moving boundary problem on $\mathbb{R}$ with exponential growth $r$ and initial data $(u^1_0,u^2_0, p_0)$, where $(u_0^1 \circ (\Theta_{p_0}^1)^{-1}, u_0^2 \circ (\Theta_{p_0}^2)^{-1}) \in \mathscr{C}_r^+ \times \mathscr{C}_r^+$, up to the $\mathscr{F}_t$-stopping time $\tau$ if 
\begin{enumerate}[(i)]
\item $(v^1, \tilde{\eta}^1):=(u^1 \circ (\theta^1_p)^{-1}, \eta^1 \circ (\theta^1_p)^{-1})$ is a $\mathscr{C}_r$-valued solution to the non-linear SPDE
\begin{equation}
\frac{\partial v^1}{\partial t}= \Delta v^1 - p^\prime(t)\frac{\partial v^1}{\partial x} + f_1(x,v^1) + \sigma_1(x,v^1)\dot{W} + \tilde{\eta}^1 
\end{equation}
with Dirichlet boundary condition $v^1(t,0)=0$ and initial data $v^1_0= u^1_0 \circ (\Theta_{p_0}^1)^{-1} \in \mathscr{C}_r^+$, until time $\tau$.
\item $(v^2, \tilde{\eta}^2):=(u^2 \circ (\theta^2_p)^{-1}, \eta^2 \circ (\theta^2_p)^{-1})$ is a $\mathscr{C}_r$-valued solution to the non-linear SPDE
\begin{equation}
\frac{\partial v^2}{\partial t}= \Delta v^2 + p^\prime(t)\frac{\partial v^2}{\partial x} + f_2(x,v^2) + \sigma_2(x,v^2)\dot{W}^- + \tilde{\eta}^2 
\end{equation}
with Dirichlet boundary condition $v^2(t,0)=0$ and initial data $v^2_0= u^2_0 \circ (\Theta_{p_0}^2)^{-1} \in \mathscr{C}_r^+$, until time $\tau$.
\item $p(0)=p_0$ and $p'(t)=h(v^1(t,\cdot),v^2(t,\cdot))$.
\end{enumerate}
We refer to $(v^1, \tilde{\eta}^1, v^2, \tilde{\eta}^2)$ as the solution to the moving boundary problem in the relative frame.
\end{defn}

\subsection{Existence and Uniqueness}

As in the proof of Theorem \ref{Main}, we will use a Picard iteration in order to prove existence and uniqueness for a truncated version of this problem. There is some extra complexity introduced when trying to do this in the case of an infinite spatial domain. In particular, we should be more careful in how we truncate the problem.

\begin{thm}\label{Main2}
Let $(\Omega, \mathscr{F}, \mathscr{F}_t,\mathbb{P})$ be a complete filtered probability space. Let $\dot{W}$ be a space time white noise on this space which respects the filtration $\mathscr{F}_t$. There exists a unique maximal solution $(u^1,\eta^1, u^2, \eta^2, p)$ to the moving boundary problem on $\mathbb{R}$, with the blow-up time given by $$\tau := \sup\limits_{M >0} \left[ \inf\left\{ t \geq 0 \; \big| \; \|u^1\|_{\mathscr{C}_r} + \|u^2\|_{\mathscr{C}_r} \geq M \right\}\right],$$ with $\tau >0$ almost surely.
\end{thm}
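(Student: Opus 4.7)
The overall strategy mirrors the proof of Theorem \ref{Main}: I would first establish existence and uniqueness for an $M$-truncated version of the problem in the weighted space $\mathscr{C}_r$, then use consistency among the truncated solutions to patch together a maximal solution on a stochastic interval up to the stated blow-up time $\tau$. As in the compact case, the non-Lipschitz contribution $p'(t) \partial_x v$ arising from the moving frame is the reason a truncation is needed at all: once $h$ is bounded and $\partial_x v$ is replaced by the derivative of a bounded profile, the Picard scheme closes.

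For the truncated problem I would define a cutoff $\kappa_M : \mathscr{C}_r \to \mathscr{C}_r$ mapping every element into the closed $\mathscr{C}_r$-ball of radius $M$ (for instance $\kappa_M(v)(x) := \left(v(x) \wedge M e^{rx}\right) \vee \left(- M e^{rx}\right)$), set $h_M(v^1, v^2) := h\left(\kappa_M(v^1), \kappa_M(v^2)\right)$, and consider the coupled $\mathscr{C}_r$-valued system
\begin{equation*}
\frac{\partial v^1}{\partial t} = \Delta v^1 - h_M(v^1, v^2) \frac{\partial}{\partial x} \kappa_M(v^1) + f_1(x, v^1) + \sigma_1(x, v^1) \dot W + \eta^1,
\end{equation*}
together with its mirror for $v^2$ driven by $\dot W^-$, both with Dirichlet condition at $0$. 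Global existence and uniqueness in $L^p\left(\Omega; \mathscr{C}_r^T\right)$ for $p$ sufficiently large would follow by a Picard iteration exactly analogous to Proposition \ref{Truncated problem Compact}, with Theorem \ref{Obstacle INfinite} replacing the compact obstacle result so that differences $\|v^i_{n+1} - v^i_n\|_{\mathscr{C}_r^T}$ are controlled by the $\mathscr{C}_r^T$-norm of the corresponding differences of the unreflected drivers.

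The key technical ingredient is a weighted stochastic-convolution estimate on $[0,\infty)$. Writing $H_{[0,\infty)}$ for the Dirichlet heat kernel on the half line, I would need a bound of the form
\begin{equation*}
\mathbb{E} \left[ \sup_{t \leq T} \left\| \int_0^{t} \int_0^{\infty} H_{[0,\infty)}(t - s, \cdot, y)\, \sigma_1(y, v(s,y))\, W(\mathrm{d}y, \mathrm{d}s) \right\|_{\mathscr{C}_r}^p \right] \leq C_{p, T, r, \delta} \int_0^T \mathbb{E}\left[ \|v(s, \cdot)\|_{\mathscr{C}_r}^p + 1 \right] \mathrm{d}s,
\end{equation*}
obtained by combining Burkholder--Davis--Gundy with weighted heat-kernel estimates; the exponential decay $e^{-\delta x}$ in hypothesis (IV) is precisely what makes the spatial weight $e^{-rx}$ integrable against the covariance of the noise. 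A parallel weighted estimate for the transport term, using H\"older's inequality and derivative bounds for $H_{[0,\infty)}$ together with the Lipschitz property of $h_M$, gives the contraction of the Picard scheme after Gronwall. The consistency step is then straightforward: if $M_1 \leq M_2$, the $M_2$-solution solves the $M_1$-truncated problem until $\tilde{\tau} := \inf\{t \geq 0 \,:\, \|v^1_{M_2}\|_{\mathscr{C}_r^t} + \|v^2_{M_2}\|_{\mathscr{C}_r^t} \geq M_1\}$, so by uniqueness the two solutions agree up to $\tilde{\tau}$. Defining $\tau_M := \inf\{t : \|v^1_M\|_{\mathscr{C}_r^t} + \|v^2_M\|_{\mathscr{C}_r^t} \geq M\}$ and $\tau := \sup_{M > 0} \tau_M$, pasting the truncated solutions on $[0, \tau_M]$ yields the maximal solution with the stated blow-up formula, and uniqueness for the original problem reduces to the truncated uniqueness on $[0, \tau_M]$ for every $M$.

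The principal obstacle is the weighted stochastic-convolution estimate. The compact-case argument of Proposition \ref{finite white noise bound} used the uniform $L^2$-bound $\int_0^1 H(t, x, y)^2 \mathrm{d}y \lesssim t^{-1/2}$ and a direct Kolmogorov criterion on $[0,1]$; here I must instead control $e^{-rx} \int_0^\infty H_{[0,\infty)}(t,x,y)^2 e^{-2(\delta - r) y} \,\mathrm{d}y$ uniformly in $x$, and the Kolmogorov argument must be re-run in the norm $\|\cdot\|_{\mathscr{C}_r^T}$, which is stronger than sup-on-compacts. Establishing the appropriate weighted heat-kernel derivative bounds and verifying that the resulting $p$-th moment increments are summable against $e^{-rx}$ is the main new analytic work; once in place, the remainder of the proof is a faithful adaptation of the argument given for Theorem \ref{Main}.
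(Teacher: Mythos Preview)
Your proposal is correct and follows essentially the same route as the paper: truncation via a weighted cutoff $F_{M,r}(u)(x) = e^{rx}\min(e^{-rx}u(x), M)$, Picard iteration in $L^p(\Omega;\mathscr{C}_r^T)$ using Theorem~\ref{Obstacle INfinite}, consistency among truncations to patch a maximal solution up to $\tau$, and the weighted stochastic-convolution estimate as the principal new ingredient. The paper's specific device for that estimate is to prove uniform local $p$-th moment H\"older increments for the weighted process $e^{-(r-\epsilon)x}\int G\,u\,W$ on each interval $[n,n+1]$ (Proposition~\ref{Infinite Holder}) and then invoke a Garsia--Rodemich--Rumsey-type lemma (Lemma~\ref{GRR}) to pass from these local bounds to control in $\mathscr{C}_\epsilon$, hence in $\mathscr{C}_r$, using the extra $e^{-\delta x}$ slack in $\sigma$ exactly as you anticipate.
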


The following notation will be used throughout the rest of the paper.

\begin{defn}
We define $G(t,x,y)$ to be the Dirichlet heat kernel on $[0, \infty)$, that is 
\begin{equation}
G(t,x,y):= \frac{1}{\sqrt{4\pi t}} \left[ \exp\left(- \frac{(x-y)^2}{4t} \right)- \exp\left(- \frac{(x+y)^2}{4t} \right) \right].
\end{equation}
For $r \in \mathbb{R}$, we also define the notation
\begin{equation}
G_r(t,x,y):=  e^{-r(x-y)}G(t,x,y).
\end{equation}
\end{defn}

Before proving Theorem \ref{Main2}, we present here some results which will be essential to the proof.

\begin{lem}\label{deterministic integral bound}
Let $r \in \mathbb{R}$. Suppose that $u \in L^{1}([0,T]; \mathscr{L}_r)$. Then we have that, for $t \in [0,T]$,
\begin{equation}
\sup\limits_{\tau \in [0,t]} \sup\limits_{x \geq 0} \left| e^{-rx} \int_0^{\tau} \int_0^{\infty} G(\tau-s,x,y)u(s,y) \textrm{d}y\textrm{d}s \right| \leq  C_{r,T} \int_0^t \|u\|_{s, \mathscr{L}_r} \textrm{d}s.
\end{equation}
\end{lem}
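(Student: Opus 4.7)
The plan is to reduce the claim to a standard Gaussian moment generating function computation; there is no real obstacle, it is essentially a direct calculation once one unwinds the weighted norm.

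First I would unpack the definition of the $\mathscr{L}_r$-norm. For every $y \geq 0$ and almost every $s \in [0,T]$, we have the pointwise bound $|u(s,y)| \leq e^{ry} \|u(s,\cdot)\|_{\mathscr{L}_r}$. Substituting this into the inner double integral and pulling the $s$-dependent norm outside the $y$-integral yields
\begin{equation*}
\left| e^{-rx}\int_0^{\tau}\int_0^{\infty} G(\tau-s,x,y)\,u(s,y)\,\mathrm{d}y\,\mathrm{d}s\right| \leq \int_0^{\tau} \|u(s,\cdot)\|_{\mathscr{L}_r} \cdot e^{-rx}\int_0^{\infty} G(\tau-s,x,y)\,e^{ry}\,\mathrm{d}y\,\mathrm{d}s.
\end{equation*}
So the essential task reduces to bounding the quantity $I(t,x) := e^{-rx}\int_0^{\infty} G(t,x,y)\,e^{ry}\,\mathrm{d}y$ uniformly over $(t,x) \in [0,T]\times[0,\infty)$.

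Next, since $x,y \geq 0$ the reflected term $\exp(-(x+y)^2/(4t))$ in the Dirichlet heat kernel is non-negative, so we obtain the clean pointwise upper bound $G(t,x,y) \leq (4\pi t)^{-1/2}\exp(-(x-y)^2/(4t))$. Extending the $y$-integral from $[0,\infty)$ to all of $\mathbb{R}$ and completing the square in the exponent, $-\tfrac{(x-y)^2}{4t} + ry = -\tfrac{(y - x - 2tr)^2}{4t} + rx + r^2 t$, gives the textbook computation
\begin{equation*}
\int_{-\infty}^{\infty} \frac{1}{\sqrt{4\pi t}}\,e^{-(x-y)^2/(4t)}\,e^{ry}\,\mathrm{d}y \;=\; e^{rx + r^2 t}.
\end{equation*}
Therefore $I(t,x) \leq e^{r^2 t} \leq e^{r^2 T}$ uniformly in $x \geq 0$.

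Finally, inserting this bound back, noting that $\|u(s,\cdot)\|_{\mathscr{L}_r} \leq \|u\|_{s,\mathscr{L}_r}$, and taking the supremum over $\tau \in [0,t]$ (using non-negativity of the integrand to replace $\int_0^{\tau}$ by $\int_0^{t}$) yields the stated estimate with the explicit constant $C_{r,T} = e^{r^2 T}$. The one point worth flagging is that the exponential weight $e^{-rx}$ in front is crucial: it is exactly what cancels the factor $e^{rx}$ produced by the Gaussian integral, making the estimate uniform in $x \geq 0$, which is exactly what is needed to close the $\mathscr{L}_r$-norm bound.
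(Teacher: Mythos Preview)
Your proof is correct and follows essentially the same route as the paper: both factor $e^{-rx}G(\tau-s,x,y)u(s,y)$ as $G_r(\tau-s,x,y)\,e^{-ry}u(s,y)$ (the paper uses this notation explicitly), pull out $\|u(s,\cdot)\|_{\mathscr{L}_r}$, and are left with bounding $\int_0^\infty G_r(t,x,y)\,\mathrm{d}y$ uniformly. The paper simply asserts this last bound as $\leq C_{r,T}$, whereas you carry out the Gaussian computation explicitly (dropping the reflected term, extending to $\mathbb{R}$, completing the square) and obtain the concrete constant $e^{r^2 T}$; this is exactly how one would verify the paper's implicit step.
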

\begin{proof}
\begin{equation} \begin{split}
\left| e^{-rx}\int_0^{\tau} \int_0^{\infty} G(\tau-s,x,y)u(s,y) \textrm{d}y\textrm{d}s \right| = & \left|\int_0^{\tau} \int_0^{\infty} G_r(t-s,x,y)e^{-ry}u(s,y) \textrm{d}y\textrm{d}s\right| \\ \leq & \int_0^{\tau} \left( \int_0^{\infty} G_r(t,x,y) \textrm{d}y \right)\|u\|_{s,\mathscr{L}_r}\textrm{d}s \\  \leq & C_{r,T} \int_0^t \|u\|_{s,\mathscr{L}_r} \textrm{d}s.
\end{split}
\end{equation}
\end{proof}
We would like an analogous result which would allow us to control the noise term appearing in the mild formulation. The following lemmas will enable us to obtain such an estimate.

\begin{prop}\label{Infinite Holder}
Suppose that $r \in \mathbb{R}$. Let $u \in L^p(\Omega;L^{\infty}([0,T] ; \mathscr{L}_{r}))$ with $p>10$.
Define 
\begin{equation}
w(t,x):= e^{-rx} \int_0^t \int_0^{\infty} G(t-s,x,z) u(s,z) \textrm{W}(\textrm{d}z,\textrm{d}s).
\end{equation}
Then $w$ is continuous almost surely and for $x , y \in [0,\infty)$ and $0 \leq s \leq \tau \leq t \leq T$ we have that
\begin{equation}
\mathbb{E}\left[ |w(\tau,x)- w(s,y)|^p \right] \leq C_{p,T,r} \mathbb{E}\left[\int_0^t \|u\|_{s,\mathscr{L}_{r}}^p \textrm{d}s \right]  \left( |\tau-s|^{1/4} + |x-y|^{1/2} \right)^{p-4},
\end{equation}
\end{prop}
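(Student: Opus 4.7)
The plan is to mirror the proof of Proposition \ref{finite white noise bound} from the compact setting, with two modifications adapted to the unbounded spatial domain: (i) absorb the exponential weight $e^{-rx}$ into the kernel by introducing $G_r(t,x,y) := e^{-r(x-y)} G(t,x,y)$, so that
\[
w(t,x) \;=\; \int_0^t \int_0^\infty G_r(t-s,x,z)\,\bigl[e^{-rz}u(s,z)\bigr]\, W(\mathrm{d}z,\mathrm{d}s),
\]
and note that $|e^{-rz}u(s,z)| \leq \|u(s,\cdot)\|_{\mathscr{L}_r}$, which packages the weight cleanly; (ii) replace the compact heat kernel estimates of Proposition \ref{HeatKernelEstimatesCompact} by their semi-infinite weighted analogues (to be proved in the appendix), which should give $\int_0^\infty G_r(t,x,z)^2\,\mathrm{d}z \leq C_{r,T} t^{-1/2}$, $\int_0^\infty |G_r(t,x,z)-G_r(t,y,z)|^2\,\mathrm{d}z \leq C_{r,T}|x-y|$, and $\int_0^\infty |G_r(t,y,z)-G_r(s,y,z)|^2\,\mathrm{d}z \leq C_{r,T}|t-s|^{1/2}$, uniformly over $x,y \geq 0$ and $s < t \leq T$.

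First I would perform the three-term decomposition
\[
w(\tau,x)-w(s,y) \;=\; \Delta_1 + \Delta_2 + \Delta_3,
\]
where $\Delta_1$ is the stochastic integral over $[s,\tau]\times[0,\infty)$ of $G_r(\tau-r',x,z)\,e^{-rz}u(r',z)$; $\Delta_2$ is the stochastic integral over $[0,s]\times[0,\infty)$ of $[G_r(\tau-r',x,z)-G_r(\tau-r',y,z)]\,e^{-rz}u(r',z)$; and $\Delta_3$ is the corresponding integral of $[G_r(\tau-r',y,z)-G_r(s-r',y,z)]\,e^{-rz}u(r',z)$. Each term is then handled by Burkholder--Davis--Gundy, bounding $\mathbb{E}[|\Delta_i|^p]$ by $C_p \mathbb{E}\bigl[\bigl(\int\!\!\int K_i(r',z)^2\|u(r',\cdot)\|_{\mathscr{L}_r}^2\,\mathrm{d}z\,\mathrm{d}r'\bigr)^{p/2}\bigr]$, where $K_i$ is the associated kernel difference.

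Next, I would apply H\"older's inequality in the time integral (with conjugate exponent $q$ slightly larger than $1$, exactly as in Proposition \ref{finite white noise bound}) to separate the deterministic integral of the $L^2_z$-norm of $K_i$ from $\int_0^t \|u\|_{r',\mathscr{L}_r}^p\,\mathrm{d}r'$. Plugging in the three heat-kernel estimates above and integrating in time yields, after combining,
\[
\mathbb{E}\bigl[|w(\tau,x)-w(s,y)|^p\bigr] \;\leq\; C_{p,T,r}\,\mathbb{E}\!\left[\int_0^t \|u\|_{r',\mathscr{L}_r}^p\,\mathrm{d}r'\right]\bigl(|\tau-s|^{1/2}+|x-y|\bigr)^{(p-4)/2}.
\]
Since $(|\tau-s|^{1/2}+|x-y|)^{(p-4)/2}$ is comparable, up to a constant, to $(|\tau-s|^{1/4}+|x-y|^{1/2})^{p-4}$, this is exactly the stated bound. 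Writing the exponent as $3 + (p-10)/2$ and applying Corollary A.3 of \cite{Dalang} (which requires $p>10$) yields the almost sure continuity of $w$ and gives the desired moment estimate.

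The main obstacle is the heat-kernel input rather than the stochastic calculus: the weighted kernel $G_r$ is not symmetric, and when $r$ has the ``wrong'' sign the factor $e^{-r(x-y)}$ can grow in the spatial displacement, so the three estimates above are not trivial consequences of the corresponding bounds for $G$. They must be established via careful Gaussian computations (splitting the two heat-kernel pieces and absorbing $e^{-r(x-y)}$ into the Gaussian exponent), which is where the constants $C_{r,T}$ acquire their dependence on $r$ and $T$. I will defer these computations to the appendix and here simply quote them in their three forms stated above.
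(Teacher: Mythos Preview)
Your approach is essentially identical to the paper's: the same three-term decomposition of $w(\tau,x)-w(s,y)$, Burkholder's inequality, H\"older in time to separate the kernel from $\|u\|_{\mathscr{L}_r}^p$, and the weighted heat-kernel estimates for $G_r$ (the paper's Proposition~\ref{Exponential heat kernel estimates}), followed by Corollary~A.3 of \cite{Dalang} for continuity.

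One point to correct: the pointwise kernel bounds you quote are not the right form. In particular, $\int_0^\infty |G_r(t,x,z)-G_r(t,y,z)|^2\,\mathrm{d}z \leq C_{r,T}|x-y|$ is false as stated (the left side blows up like $t^{-1/2}$ as $t\downarrow 0$ for fixed $x\neq y$), and similarly for the time-increment estimate. What actually holds, and what the paper proves and uses, are the already time-integrated bounds
\[
\left(\int_0^s \Bigl[\int_0^\infty |G_r(s-u,x,z)-G_r(s-u,y,z)|^2\,\mathrm{d}z\Bigr]^{p/(p-2)}\mathrm{d}u\right)^{(p-2)/2} \leq C_{p,r,T}\,|x-y|^{(p-4)/2},
\]
and the analogous ones for the other two terms; these are obtained by a scaling substitution rather than by plugging in a uniform pointwise bound. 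Since you already plan to invoke the analogues of Proposition~\ref{HeatKernelEstimatesCompact}, this is just a matter of stating the estimates in their correct integrated form.
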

\begin{proof}
We have that 
\begin{equation}\begin{split}
\mathbb{E}& \left[ |w(\tau,x)- w(s,y)|^p \right]\\  \leq & C_p  \mathbb{E}\left[ \left|\int_s^{\tau} \int_0^{\infty} G_{r}(\tau-q,x,z) e^{-rz}u(q,z) \textrm{W}(\textrm{d}z,\textrm{d}q) \right|^p \right] \\ & + C_p  \mathbb{E}\left[ \left| \int_0^s \int_0^{\infty} \left[G_{r}(\tau-q,x,z)-G_{r}(\tau-q,y,z) \right] e^{-rz}u(q,z) \textrm{W}(\textrm{d}z,\textrm{d}q) \right|^p \right] \\& + C_p \mathbb{E}\left[ \left| \int_0^s \int_0^{\infty} \left[ G_{r}(\tau-q,y,z)- G_{r}(s-q,y,z) \right] e^{-rz}u(q,z) \textrm{W}(\textrm{d}z,\textrm{d}q) \right|^p \right].
\end{split}
\end{equation}
We bound the first term only, and note that the other terms follow similarly by the estimates from Proposition \ref{Exponential heat kernel estimates}. Burkholder's inequality gives
\begin{equation*} \begin{split}
\mathbb{E}& \left[ \left|\int_s^{\tau} \int_0^{\infty} G_{r}(\tau-q,x,z) e^{-rz}u(q,z) \textrm{W}(\textrm{d}z,\textrm{d}q) \right|^p \right] \\ & \leq C_p \mathbb{E}\left[ \left|\int_s^{\tau} \int_0^{\infty} G_{r}(\tau-q,x,z)^2 e^{-2rz}u(q,z)^2 \textrm{d}z\textrm{d}q \right|^{p/2} \right] \\ & \leq C_p \mathbb{E}\left[ \left|\int_s^{\tau} \left( \int_0^{\infty} G_{r}(\tau-q,x,z)^2 \textrm{d}z \right) \|u \|_{q,\mathscr{L}_r}^2 \textrm{d}q \right|^{p/2} \right] 
\end{split}
\end{equation*}
H\"{o}lder's inequality then gives
\begin{equation}\begin{split}
\mathbb{E} & \left[ \left|\int_s^{\tau} \left( \int_0^{\infty} G_{r}(\tau-q,x,z)^2 \textrm{d}z \right) \|u \|_{q,\mathscr{L}_r}^2 \textrm{d}q \right|^{p/2} \right]  \\ & \leq C_{p,T}\left( \int_s^{\tau} \left[ \int_0^{\infty} G_{r}(\tau-q,x,z)^2 \textrm{d}z \right]^{p/(p-2)} \textrm{d}q \right)^{(p-2)/2} \times \mathbb{E} \left[ \int_s^{\tau}  \|u\|^p_{q,\mathscr{L}_{r}} \textrm{d}q \right]. 
\end{split}
\end{equation}
Applying the first bound from Proposition \ref{Exponential heat kernel estimates} then gives that this is at most 
\begin{equation*}
C_{r,p, T} |\tau-s|^{(p-4)/4} \times \mathbb{E} \left[ \int_s^{\tau}  \|u\|^p_{q,\mathscr{L}_{r}} \textrm{d}q \right].
\end{equation*}
By making similar arguments, using the other bounds from Proposition \ref{Exponential heat kernel estimates}, we obtain that 
\begin{equation*}
\mathbb{E} \left[ |w(\tau,x)- w(s,y)|^p \right] \leq C_{r,p, T} \mathbb{E} \left[ \int_0^t  \|u\|^p_{q,\mathscr{L}_{r}} \textrm{d}q \right] \times \left( |\tau-s|^{1/4}+ |x-y|^{1/2} \right)^{p-4} .
\end{equation*} 
Continuity of $w$ then follows by Corollary A.3 in \cite{Dalang}. 
\end{proof}
The following result is a reformulation of Lemma 3.4 in \cite{Otobe}.

\begin{lem}\label{GRR}
Let $p, K , \delta >0$. Suppose that $w: [0,T] \times [0,\infty) \rightarrow \mathbb{R}$ is a random field such that for every $s,t \in [0,T]$, every $n$ and every $x,y \in [n,n+1]$, 
\begin{equation}
\mathbb{E}\left[ |w(t,x)-w(s,y)|^p \right] \leq K ( |t-s| + |x-y|)^{2+ \epsilon}.
\end{equation}
Then for every $\delta >0$, there exists a constant $C$ depending only on $p$, $\epsilon$, $T$ and $\delta$, and a non-negative random variable $Y$ such that
\begin{equation}
\|w\|_{\mathscr{C}_{\delta}^T} \leq C( |w(0,0)| +Y),
\end{equation}
almost surely, where $\mathbb{E}\left[ Y^p \right] \leq CK$.
\end{lem}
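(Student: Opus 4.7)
The idea is to combine a Kolmogorov-type continuity estimate on each unit spatial strip $[0,T]\times[n,n+1]$ with a telescoping argument across strips and an exponential weighting. The crucial point is that the moment hypothesis is translation invariant in space and the geometry of each strip is independent of $n$, so the $L^p$-control on the oscillation over a strip can be taken uniform in $n$.

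Applying a standard Kolmogorov continuity argument (for instance via Corollary A.3 in \cite{Dalang}, which has already been invoked earlier in the paper) to $w$ restricted to $[0,T]\times[n,n+1]$, the hypothesis $\mathbb{E}[|w(t,x)-w(s,y)|^p]\leq K(|t-s|+|x-y|)^{2+\epsilon}$ yields a non-negative random variable $A_n$ with
\begin{equation*}
\sup_{(t,x),(s,y)\in[0,T]\times[n,n+1]}|w(t,x)-w(s,y)|\leq A_n,\qquad \mathbb{E}[A_n^p]\leq C_{p,\epsilon,T}\,K,
\end{equation*}
the constant being independent of $n$. Setting $Z_n:=\sup_{(t,x)\in[0,T]\times[n,n+1]}|w(t,x)|$ this immediately gives $Z_n\leq|w(0,n)|+A_n$, and since $(0,k),(0,k+1)\in[0,T]\times[k,k+1]$ we have $|w(0,k+1)-w(0,k)|\leq A_k$. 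Telescoping then produces
\begin{equation*}
Z_n\leq|w(0,0)|+A_n+\sum_{k=0}^{n-1}A_k.
\end{equation*}

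Now define $\tilde{Y}:=\sup_{k\geq 0}e^{-\delta k/2}A_k$; a union bound gives $\mathbb{E}[\tilde{Y}^p]\leq\sum_{k\geq 0} e^{-\delta kp/2}\mathbb{E}[A_k^p]\leq C'\,K$. Since $A_k\leq e^{\delta k/2}\tilde{Y}$, we obtain $e^{-\delta n}A_n\leq e^{-\delta n/2}\tilde{Y}$ and
\begin{equation*}
e^{-\delta n}\sum_{k=0}^{n-1}A_k\leq \tilde{Y}\,e^{-\delta n}\sum_{k=0}^{n-1}e^{\delta k/2}=\tilde{Y}\,\frac{e^{-\delta n/2}-e^{-\delta n}}{e^{\delta/2}-1}\leq \frac{\tilde{Y}}{e^{\delta/2}-1}.
\end{equation*}
Hence $e^{-\delta n}Z_n\leq|w(0,0)|+C_\delta\tilde{Y}$ for every $n$, and because $e^{-\delta x}\leq e^{-\delta n}$ for $x\in[n,n+1]$ this supremises to $\|w\|_{\mathscr{C}_\delta^T}\leq|w(0,0)|+C_\delta\tilde{Y}$. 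Taking $Y:=C_\delta\tilde{Y}$ delivers the required form with $\mathbb{E}[Y^p]\leq CK$.

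The main obstacle is the first step: extracting a quantitative $L^p$-oscillation bound on each strip with a constant that is uniform in $n$. This is a standard consequence of the translation invariance of the moment hypothesis together with a GRR/Kolmogorov-type estimate, but some care is needed in tracking the constants so that they depend only on $p$, $\epsilon$ and $T$. Once this is in hand, the telescoping and exponential-weighting steps are essentially bookkeeping.
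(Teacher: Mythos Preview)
Your argument is correct and is precisely the natural route: Kolmogorov/GRR on each strip $[0,T]\times[n,n+1]$ with a constant uniform in $n$ (by spatial translation invariance), telescoping along the integer points to propagate the value at $(0,0)$, and then the exponential weight to sum the $A_k$. The paper does not actually prove this lemma---it simply records it as a reformulation of Lemma~3.4 in \cite{Otobe}---so there is no in-paper argument to compare against, but your proof matches the standard one underlying that reference.
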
 

\begin{prop}\label{white noise integral bound}
Let $r \in \mathbb{R}$. Suppose that $u \in L^p(\Omega; L^{\infty}([0,T]; \mathscr{L}_{r-\epsilon}))$.  Then we have that, for $t \in [0,T]$ and $p> 12$, 
\begin{equation}
\mathbb{E}\left[ \sup\limits_{\tau \in [0,t]} \sup\limits_{x \geq 0} \left| e^{-rx} \int_0^{\tau} \int_0^{\infty} G(\tau-s,x,y)u(s,y) \textrm{W}( \textrm{d}y, \textrm{d}s) \right|^p \right] \leq C_{p,r,T,\epsilon} \mathbb{E}\left[\int_0^t \|u\|_{s,\mathscr{L}_{r-\epsilon}}^p \textrm{d}s \right].
\end{equation}
\end{prop}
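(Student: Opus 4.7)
The plan is to reduce the proposition to Proposition \ref{Infinite Holder} combined with Lemma \ref{GRR} by introducing a weight-shifted version of the process. Define the random field
\begin{equation*}
I(\tau,x) := \int_0^{\tau}\int_0^{\infty} G(\tau-s,x,y)\,u(s,y)\,\textrm{W}(\textrm{d}y,\textrm{d}s),
\end{equation*}
and set $\tilde w(\tau,x):= e^{-(r-\epsilon)x}\,I(\tau,x)$, so that the quantity to be estimated is $\sup_{\tau\le t}\sup_{x\ge 0}|e^{-rx}I(\tau,x)| = \sup_{\tau\le t}\sup_{x\ge 0}|e^{-\epsilon x}\tilde w(\tau,x)| = \|\tilde w\|_{\mathscr{C}_{\epsilon}^t}$. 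Note that $\tilde w(0,0)=0$.

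First I would apply Proposition \ref{Infinite Holder} to $\tilde w$ with weight parameter $r-\epsilon$ in the role of $r$. This yields, for $0\le s\le \tau\le t$ and $x,y\ge 0$,
\begin{equation*}
\mathbb{E}\bigl[|\tilde w(\tau,x)-\tilde w(s,y)|^p\bigr]\le C_{p,T,r,\epsilon}\,\mathbb{E}\!\left[\int_0^t\|u\|_{q,\mathscr{L}_{r-\epsilon}}^p\,\textrm{d}q\right]\bigl(|\tau-s|^{1/4}+|x-y|^{1/2}\bigr)^{p-4}.
\end{equation*}
Since $p>12$, both exponents $(p-4)/4$ and $(p-4)/2$ exceed $2$, so for $|x-y|\le 1$ and $|\tau-s|\le T$ an elementary interpolation (using $a^{\alpha}+b^{\alpha}\le (a+b)^{\alpha}$ for $\alpha\ge 1$, together with powers of $T$ to absorb the $|\tau-s|$ scaling) converts the right hand side into the form
\begin{equation*}
K\bigl(|\tau-s|+|x-y|\bigr)^{2+\epsilon'}, \qquad K:= \tilde C_{p,T,r,\epsilon}\,\mathbb{E}\!\left[\int_0^t\|u\|_{q,\mathscr{L}_{r-\epsilon}}^p\,\textrm{d}q\right],
\end{equation*}
for some $\epsilon'>0$. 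This is precisely the hypothesis of Lemma \ref{GRR}, which only requires the estimate for $x,y$ lying in a common unit interval $[n,n+1]$.

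Next, applying Lemma \ref{GRR} to $\tilde w$ with its free parameter chosen equal to $\epsilon$ produces a random variable $Y\ge 0$ with $\mathbb{E}[Y^p]\le CK$ and a deterministic constant $C=C(p,T,\epsilon)$ such that $\|\tilde w\|_{\mathscr{C}_{\epsilon}^T}\le C(|\tilde w(0,0)|+Y)=CY$ almost surely. Taking $p$-th moments and inserting the definition of $K$ gives the claim.

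The main obstacle I anticipate is the bookkeeping in the second paragraph: one has to ensure that the Hölder-type moment bound produced by Proposition \ref{Infinite Holder}, which has mixed exponents $1/4$ in time and $1/2$ in space, can indeed be absorbed into the form $(|\tau-s|+|x-y|)^{2+\epsilon'}$ required by Lemma \ref{GRR} on unit spatial squares, and that the resulting constant depends only on $p,T,r,\epsilon$. The gain of the extra exponential decay $e^{-\epsilon x}$ is what allows the supremum over the unbounded spatial domain to be controlled, and this is precisely the reason for the small weight loss from $r$ to $r-\epsilon$ in the statement.
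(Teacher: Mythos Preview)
Your proposal is correct and follows essentially the same route as the paper's own proof: define the weight-shifted field $\tilde w(\tau,x)=e^{-(r-\epsilon)x}I(\tau,x)$, apply Proposition \ref{Infinite Holder} with weight $r-\epsilon$ to obtain the increment moment bound, then invoke Lemma \ref{GRR} with the parameter $\epsilon$ to pass to the weighted supremum. The only difference is that you spell out the exponent bookkeeping needed to cast the mixed H\"older bound into the form required by Lemma \ref{GRR}, which the paper leaves implicit.
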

\begin{proof}
Define
 \begin{equation}
w(t,x):= e^{-(r-\epsilon)x} \int_0^t \int_0^{\infty} G(t-s,x,z) u(s,z) \textrm{W}(\textrm{d}y,\textrm{d}s).
\end{equation}
Then Proposition \ref{Infinite Holder} gives that for $\tau, s \in [0,t]$ and $x,y \in [0,1]$, 
\begin{equation}
\mathbb{E}\left[ |w(\tau,x)- w(s,y)|^p \right] \leq C_{p,T,r,\epsilon} \mathbb{E}\left[\int_0^t \|u\|^p_{q,\mathscr{L}_{(r-\epsilon)}} \textrm{d}q \right]  \left( |\tau-s|^{1/4} + |x-y|^{1/2} \right)^{p-4}.
\end{equation}
It then follows by Lemma \ref{GRR} that, for $p>12$,
\begin{equation}
\mathbb{E}\left[\|w\|_{t,\mathscr{C}_{\epsilon}}^p \right] \leq C_{p,T,r, \epsilon} \mathbb{E}\left[\int_0^t \|u\|^p_{s,\mathscr{L}_{(r-\epsilon)}} \textrm{d}s \right].
\end{equation}
So we have the result.
\end{proof}

We are now in position to prove Theorem \ref{Main2} with a Picard iteration. Since the ideas for the remaining arguments are similar to those in the proof of Theorem \ref{Main}, we give an outline of the strategy only.

\begin{proof}[Proof of Theorem \ref{Main2}]
Our strategy is as follows:
\begin{enumerate}
\item We note that, by the definition, it is sufficient to prove existence and uniqueness for maximal solutions to the coupled SPDEs 
\begin{enumerate}
\item $(v^1, \tilde{\eta^1}):=(u^1 \circ (\theta^1_p)^{-1}, \eta^1 \circ (\theta^1_p)^{-1})$, a $\mathscr{C}_r$-valued solution to the non-linear SPDE
\begin{equation}
\frac{\partial v^1}{\partial t}= \Delta v^1 - h(v^1(t,\cdot),v^2(t,\cdot))\frac{\partial v^1}{\partial x} + f_1(x,v^1) + \sigma_1(x,v^1)\dot{W} + \eta 
\end{equation}
with Dirichlet boundary condition $v^1(0)=0$ and initial data $v^1_0= u^1_0 \circ (\Theta_{p_0}^1)^{-1} \in \mathscr{C}_r^+$.

\item $(v^2, \tilde{\eta^2}):=(u^2 \circ (\theta^2_p)^{-1}, \eta^2 \circ (\theta^2_p)^{-1})$, a $\mathscr{C}_r$-valued solution to the non-linear SPDE
\begin{equation}
\frac{\partial v^2}{\partial t}= \Delta v^2 + h(v^1(t,\cdot),v^2(t,\cdot))\frac{\partial v^2}{\partial x} + f_2(x,v^2) + \sigma_2(x,v^2)\dot{W}^- + \eta 
\end{equation}
with Dirichlet boundary condition $v^2(0)=0$ and initial data $v^2_0= u^2_0 \circ (\Theta_{p_0}^2)^{-1} \in \mathscr{C}_r^+$.
\end{enumerate}

\item We once again consider a truncated version of the problem. That is, we find $(v^1_M, \eta_M^1, v^2_M, \eta^2_M)$ such that 
\begin{enumerate}
\item $(v^1_M,\eta^1_M)$ solves the reflected SPDE 
\begin{equation}\label{truncated b}
\frac{\partial v^1_M}{\partial t}= \Delta v^1_M - h_{M,r}(v^1_M,v^2_M) \frac{\partial}{\partial x}(F_{M,r}(v^1_M)) + f_1(x, v^1_M) + \sigma_1(x,v^1_M) \dot{W} + \eta^1_M
\end{equation}
with Dirichlet boundary condition $v^1(0)=0$ and initial data $v^1_0= u^1_0 \circ (\Theta_{p_0}^1)^{-1} \in \mathscr{C}_r^+$.
\item $(v^2_M,\eta^2_M)$ solves the reflected SPDE 
\begin{equation}\label{truncated a}
\frac{\partial v^2}{\partial t}= \Delta v^2_M + h_{M,r}(v^1_M,v^2_M) \frac{\partial}{\partial x}(F_{M,r}(v^2_M)) + f_2(x, v^2_M) + \sigma_2(x,v^2_M) \dot{W}^- + \eta^2
\end{equation}
with Dirichlet boundary condition $v^2(0)=0$ and initial data $v^2_0= u^2_0 \circ (\Theta_{p_0}^2)^{-1} \in \mathscr{C}_r^+$.
\end{enumerate}

As in Proposition \ref{Truncated problem Compact}, $h_{M,r}$ is defined by applying $h$ to suitably truncated inputs, with the truncation function here being $F_{M,r}$. Formalising this, we define $h_{M,r}: \mathscr{C}_r \times \mathscr{C}_r \rightarrow \mathbb{R}$ by
\begin{equation}
h_{M,r}(v_1,v_2):= h(F_{M,r}(v_1),F_{M,r}(v_2)),
\end{equation}
with $F_{M,r}: \mathscr{C}_r \rightarrow \mathscr{C}_r$ is given by $$F_{M,r}(u)(x):= e^{rx} \min(e^{-rx}u(x),M).$$ 
\item Use a Picard argument to prove global existence and uniqueness for the solution to the truncated problem on the finite time interval $[0,T]$. The first approximations are given by $v^1_{M,n}(t,x)=v^1_0(x)$ and $v^2_{M,n}(t,x)=v^2_0(x)$ for all time. For $n \geq 1$, we let $w^1_{M,n+1}$ solve the SPDE
\begin{equation}
\frac{\partial w_{M,n+1}^1}{\partial t} = \Delta w_{M,n+1}^1 - h_{M,r}(v^1_{M,n},v^2_{M,n}) \frac{\partial}{\partial x}(v^1_{M,n} \wedge M) + f_1(x, v^1_{M,n}) + \sigma_1(x,v^1_{M,n}) \dot{W}
\end{equation}
with Dirichlet boundary condition $w^1_{M,n+1}(t,0)=0$ and initial data $w_{M,n+1}^1(0,x)=v_0^1(x)$. Writing this in mild form gives the expression 
\begin{equation}\begin{split}
w^1_{M,n+1}(t,x)= & \int_0^{\infty} G(t,x,y)v_0^1(y) \textrm{d}y \\ & - \int_0^t \int_0^{\infty} \frac{\partial G}{\partial y}(t-s,x,y) h_{M,r}(v^1_{M,n}(s,\cdot), v^2_{M,n}(s,\cdot)) F_{M,r}(v^1_{M,n}(s,\cdot))(y) \textrm{d}y \textrm{d}s \\ & + \int_0^t \int_0^{\infty} G(t-s,x,y)f_1(y,v^1_{M,n}(s,y)) \textrm{d}x\textrm{d}s \\ & + \int_0^t \int_0^{\infty} G(t-s,x,y) \sigma_1(y,v^1_{M,n}(s,y)) \textrm{W}(\textrm{d}y,\textrm{d}s).
\end{split}
\end{equation}
We then set $v^1_{M,n+1}:= w^1_{M,n+1}+ z^1_{M,n+1}$, where $z^1_{M,n+1}$ solves the obstacle problem with obstacle $-w^1_{M,n+1}$. We similarly define $w^2_{M,n+1}$ and $v^2_{M,n+1}$. Our Lipschitz conditions on $h$, $f$ and $\sigma$, together with the estimates from Lemma \ref{deterministic integral bound}, Proposition \ref{white noise integral bound} and Proposition \ref{Infintie derivative bound} allow us to argue as in the proof of Propsition \ref{Truncated problem Compact} to obtain that, for $t \in [0,T]$,
\begin{multline}\label{pic1}
\mathbb{E}\left[\|w_{M,n+1}^1-w_{M,n}^1\|_{\mathscr{C}_r^t}^p+ \|w_{M,n+1}^2-w_{M,n}^2\|_{\mathscr{C}_r^t}^p \right] \\ \leq C_{M,p,T,r,\delta} \int_0^t \mathbb{E}\left[\|v_{M,n+1}^1-v_{M,n}^1\|_{\mathscr{C}_r^s}^p+ \|v_{M,n+1}^2-v_{M,n}^2\|_{\mathscr{C}_r^s}^p \right] \textrm{d}s.
\end{multline}
Theorem \ref{Obstacle INfinite} gives that, for $t \in [0,T]$ and $i=1,2$,
\begin{equation}
\|v_{M,n+1}^i-v_{M,n}^i \|_{\mathscr{C}_r^t} \leq C_{r,T} \|w_{M,n+1}^i - w_{M,n}^i \|_{\mathscr{C}_r^t}.
\end{equation}
Plugging this into (\ref{pic1}) then gives that, for $t \in [0,T]$,
\begin{multline}\label{pic2}
\mathbb{E}\left[\|v_{M,n+1}^1-v_{M,n}^1\|_{\mathscr{C}_r^t}^p+ \|v_{M,n+1}^2-v_{M,n}^2\|_{\mathscr{C}_r^t}^p \right] \\ \leq C_{M,p,T,r,\delta} \int_0^t \mathbb{E}\left[\|v_{M,n+1}^1-v_{M,n}^1\|_{\mathscr{C}_r^s}^p+ \|v_{M,n+1}^2-v_{M,n}^2\|_{\mathscr{C}_r^s}^p \right] \textrm{d}s.
\end{multline}
Arguing as in Proposition \ref{Truncated problem Compact}, we see that $(v_{M,n}^1,v_{M,n}^2)_{n \geq 1}$ is Cauchy in $L^p(\Omega; \mathscr{C}_r^T)^2$ for large enough $p$, and the limit solves the truncated problem given by equations (\ref{truncated b}) and (\ref{truncated a}). 
\item As in Proposition \ref{Truncated problem Compact},  uniqueness for the truncated problem can be shown by applying the same estimates as in the proof of existence and concluding with a Gronwall argument.
\item We note the consistency of the truncated problems different truncation values $M$ and use this to define a solution to the problem until the $\|.\|_{\mathscr{C}_r}$ norm blows up.
\item We observe that uniqueness of the truncated problems implies uniqueness for the original moving boundary problem.
\item To deduce that $\tau >0$ almost surely, consider $w_M^i$, the solution to the SPDE 
\begin{equation}
\frac{\partial w_{M}^1}{\partial t} = \Delta w_{M}^1 - h_{M,r}(v^1_{M,n},v^2_{M,n}) \frac{\partial}{\partial x}(v^1_{M,n} \wedge M) + f_1(x, v^1_{M,n}) + \sigma_1(x,v^1_{M,n}) \dot{W}.
\end{equation}
By Propositions A.1 and A.3, we have that $w_M ^i \in C([0,T]; \mathscr{C}_r)$ almost surely. It follows that, for $M$ large enough, $\rho_M >0$, where 
\begin{equation}
\rho_M = \inf \{ t \geq 0 \; | \; \|w_M^1\|_{\mathscr{C}_r^t}+ \|w_M^2\|_{\mathscr{C}_r^t} \geq M/2 \}.
\end{equation}
Since $\| v_M^i \|_{\mathscr{C}_r^t} \leq 2 \|w_M^i \|_{\mathscr{C}_r^t}$, we have that, for large enough $M$, $\tau_M >0$, where 
\begin{equation}
\tau_M = \inf \{t \geq 0 \; | \; \|v_M^1\|_{\mathscr{C}_r^t}+ \|v_M^2\|_{\mathscr{C}_r^t} \geq M \}.
\end{equation}
It follows that $\tau >0$.
\end{enumerate}
\end{proof}
\begin{prop}
Suppose that $h$ is bounded. Then the solution to the moving boundary problem on $\mathbb{R}$ is global.
\end{prop}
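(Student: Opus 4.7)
The plan is to mirror the strategy of Proposition~\ref{global existence} from the compact case, but now working in the weighted space $\mathscr{C}_r$ and invoking the semi-infinite estimates established earlier in this section. Fix $T>0$ and let $(v^1,\eta^1,v^2,\eta^2)$ denote the unique maximal solution to the moving boundary problem in the relative frame on the half-line, with blow-up time $\tau$. For each $M>0$, let $(v_M^1,\eta_M^1,v_M^2,\eta_M^2)$ be the solution to the corresponding $M$-truncated problem (equations~\eqref{truncated b}--\eqref{truncated a}) with the same initial data, which exists globally by Theorem~\ref{Main2}. Consistency of the truncated problems, together with the definition of $\tau$, gives $(v_M^1,v_M^2)=(v^1,v^2)$ on $[0,\tau_M]$, where $\tau_M$ is the first time that $\|v_M^1\|_{\mathscr{C}_r^t}+\|v_M^2\|_{\mathscr{C}_r^t}$ exceeds $M$. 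The goal is therefore to show that $\sup_M\mathbb{E}\bigl[\|v_M^1\|_{\mathscr{C}_r^T}^p+\|v_M^2\|_{\mathscr{C}_r^T}^p\bigr]<\infty$ for a suitably large exponent $p$.

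Following the compact case, I would let $w_M^i$ denote the unreflected SPDE whose reflection-free part produces $v_M^i$. By Theorem~\ref{Obstacle INfinite}, which bounds the solution to the obstacle problem by the corresponding obstacle in the $\mathscr{C}_r^t$-norm, one gets $\|v_M^i\|_{\mathscr{C}_r^t}\leq C_{r,T}\|w_M^i\|_{\mathscr{C}_r^t}$, so it suffices to estimate the $w_M^i$. Write $w_M^1$ in mild form and treat the four pieces separately: the contribution of the initial datum is a deterministic element of $\mathscr{C}_r$, bounded uniformly in $M$; the drift term, by assumptions (I)--(II) on $f_1$ and Lemma~\ref{deterministic integral bound}, contributes at most $C_{r,T}\int_0^t(1+\|v_M^1\|_{\mathscr{C}_r^s})\,ds$; the stochastic term, by hypotheses (III)--(IV) on $\sigma_1$ and Proposition~\ref{white noise integral bound} applied with the weight $r-\epsilon$ for some $\epsilon<\delta$, is controlled by $C_{p,r,T,\epsilon}\int_0^t(1+\mathbb{E}\|v_M^1\|_{\mathscr{C}_r^s}^p)\,ds$.

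The main point is the moving-boundary term, and this is exactly where the assumption that $h$ is bounded is used. Because $|h_{M,r}|\leq\|h\|_\infty$ uniformly in $M$, the constant in front of
\begin{equation*}
\int_0^t\int_0^\infty\frac{\partial G}{\partial y}(t-s,x,y)\,h_{M,r}(v_M^1,v_M^2)\,F_{M,r}(v_M^1)(y)\,dy\,ds
\end{equation*}
does not depend on $M$. Using Proposition~\ref{Infintie derivative bound} to control the heat-kernel derivative in the weighted norm and noting that $|F_{M,r}(v_M^1)(y)|\leq e^{ry}\|v_M^1\|_{\mathscr{C}_r^s}$, this contribution is bounded by $C_{\|h\|_\infty,r,T}\int_0^t\|v_M^1\|_{\mathscr{C}_r^s}\,ds$, again uniformly in $M$. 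Putting the four estimates together and treating $v_M^2$ symmetrically gives
\begin{equation*}
\mathbb{E}\bigl[\|v_M^1\|_{\mathscr{C}_r^t}^p+\|v_M^2\|_{\mathscr{C}_r^t}^p\bigr]\leq C\Bigl(1+\int_0^t\mathbb{E}\bigl[\|v_M^1\|_{\mathscr{C}_r^s}^p+\|v_M^2\|_{\mathscr{C}_r^s}^p\bigr]\,ds\Bigr),
\end{equation*}
with $C=C(p,r,T,\epsilon,\|h\|_\infty,\|v_0^1\|_{\mathscr{C}_r},\|v_0^2\|_{\mathscr{C}_r})$ independent of $M$.

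An application of Gronwall's lemma yields $\sup_M\mathbb{E}\bigl[\|v_M^1\|_{\mathscr{C}_r^T}^p+\|v_M^2\|_{\mathscr{C}_r^T}^p\bigr]<\infty$. By Fatou's lemma applied along $M\uparrow\infty$, $\mathbb{E}\bigl[\|v^1\|_{\mathscr{C}_r^{\tau\wedge T}}^p+\|v^2\|_{\mathscr{C}_r^{\tau\wedge T}}^p\bigr]<\infty$, which forces $\tau>T$ almost surely; since $T$ is arbitrary, $\tau=\infty$ a.s., and the finite $p$-th moments obtained in the limit also let us pass through the localising sequence to verify that the weak formulation holds with $\tau_n=\infty$. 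The key obstacle is verifying that the moving-boundary term can be absorbed into the linear Gronwall estimate with constants independent of $M$, which succeeds precisely because boundedness of $h$ eliminates the super-linear dependence on the solution that would otherwise appear through the truncation.
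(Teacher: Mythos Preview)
Your proposal is correct and follows essentially the same approach as the paper, which simply states that one argues as in Proposition~\ref{global existence}, replacing the bounds on $H$ by the corresponding bounds on $G_r$ and $G_{r+\delta}$. You have filled in precisely those details: the obstacle estimate from Theorem~\ref{Obstacle INfinite} in place of the $[0,1]$ version, Lemma~\ref{deterministic integral bound} and Proposition~\ref{white noise integral bound} for the drift and stochastic integrals, and Proposition~\ref{Infintie derivative bound} for the moving-boundary term, with the boundedness of $h$ ensuring the constants are $M$-independent.
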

\begin{proof}
We argue as in Proposition \ref{global existence}, replacing bounds on $H$ with the corresponding bounds on $G_r(t,x,y)$ and $G_{r+\delta}(t,x,y)$.
\end{proof}

\begin{rem}
We note that our uniqueness result here extends the existing theory for uniqueness for reflected SPDEs on infinite spatial domains. Until now, uniqueness had only been shown for equations 
\begin{equation}
\frac{\partial u}{\partial t} = \Delta u + f(x,u) + \sigma(x,u)\dot{W} + \eta
\end{equation}
in the case when $\sigma$ is constant. This was proved in \cite{Otobe}, where the spatial domain was $\mathbb{R}$ (this makes no difference to the arguments here). Choosing $h=0$ in our equations i.e. a static boundary, we obtain uniqueness for solutions to these equations in the spaces $\mathscr{C}_r$, provided that the dependence of the volatility on the solution itself decays exponentially, as in conditions (iii) and (iv) in the formulation of the problem.
\end{rem}

\subsection{H\"{o}lder Continuity}

\begin{thm}\label{Holder Continuity2}
For $i=1,2$, let $u_0^i$ be such that $v_0^i :=u_0^i \circ (\Theta_{p_0}^i)^{-1} \in \mathscr{C}_r^+$, with 
\begin{equation*}
|e^{-rx}v_0^i(x) - e^{-ry}v_0^i(y)| \leq C_{\gamma} |x-y|^{\gamma/2}
\end{equation*}
for every $\gamma \in (0,1)$ and every $x,y \in [0,\infty)$.
Then, for every $M>0$ and every $\gamma \in (0,1)$ the solution $(v^1_M,v^2_M)$ to the $M$-truncated problem with initial data $(v_0^1,v_0^2)$, described by equations (\ref{truncated b}) and (\ref{truncated a}), is locally $\gamma/4$-H\"{o}lder in time and $\gamma/2$-H\"{o}lder in space. In particular, if $(u^1,\eta^1,u^2,\eta^2,p)$ is the solution to our moving boundary problem with initial data $(u_0^1,u_0^2,p_0)$, then $(u^1,u^2)$ enjoys the same H\"{o}lder regularity locally until the blow-up time, $\tau$.
\end{thm}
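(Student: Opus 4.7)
The plan is to adapt the proof of Theorem \ref{Holder Continuity} to the half-line setting, using the heat kernel $G$ on $[0,\infty)$ together with its weighted variant $G_r$ in place of the compact Dirichlet kernel $H$, and relying on the heat-kernel estimates of Propositions \ref{Exponential heat kernel estimates} and \ref{Infintie derivative bound} along with the infinite-interval obstacle estimate Theorem \ref{Obstacle INfinite}. By symmetry I treat $v^1_M$ only. First I would write $v^1_M = w^1_M + z^1_M$, where $w^1_M$ is the $\mathscr{C}_r$-valued mild solution of the unreflected equation (the sum of the heat-kernel convolution of the initial data with the drift, moving-boundary and stochastic integrals against $G$), and $(z^1_M, \tilde{\eta}^1_M)$ solves the obstacle problem on $[0,\infty)$ with obstacle $-w^1_M$. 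The moment bound $\mathbb{E}[\|v^i_M\|_{\mathscr{C}_r^T}^p]<\infty$ for every large $p$, a by-product of the Picard iteration being Cauchy in $L^p(\Omega; \mathscr{C}_r^T)$ in the proof of Theorem \ref{Main2}, will be used as input throughout.

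The first step is to establish, for each $T, N > 0$ and sufficiently large $r$ (depending on $\gamma$), a Kolmogorov-type moment bound
\begin{equation*}
\mathbb{E}\bigl[|w^1_M(t,x) - w^1_M(s,y)|^r\bigr] \leq C_{N,T}\bigl(|t-s|^{1/2} + |x-y|\bigr)^{r/2-2}
\end{equation*}
uniformly in $s,t \in [0,T]$ and $x,y \in [0,N]$, exactly as in the proof of Theorem \ref{Holder Continuity}. Rewriting the initial-data convolution with the weighted kernel $G_r$ and using the assumed $\gamma/2$-H\"older regularity of $e^{-rx}v_0^1$ controls the deterministic initial contribution. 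The drift term is controlled using Proposition \ref{Exponential heat kernel estimates}, the moving-boundary term using Proposition \ref{Infintie derivative bound} (whose $(t-s)^{-1/2}$ bound on the derivative kernel is precisely what is needed), and the stochastic term using Burkholder's inequality together with Proposition \ref{Exponential heat kernel estimates}, exactly as in the proof of Proposition \ref{Infinite Holder}. Corollary A.3 of \cite{Dalang} applied on the compact rectangle $[0,T]\times[0,N]$ then yields an almost-sure local H\"older estimate for $w^1_M$, with the desired H\"older exponents $\gamma/4$ in time and $\gamma/2$ in space for an appropriate choice of $r$.

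The second step transfers this regularity from $w^1_M$ to $v^1_M$ via the penalised obstacle approximation. Let $z^{\epsilon}$ solve $\partial_t z^{\epsilon} = \Delta z^{\epsilon} + g_{\epsilon}(z^{\epsilon} + w^1_M)$ on $[0,\infty)$ with Dirichlet condition at $0$, zero initial data and $g_{\epsilon}(x) := \frac{1}{\epsilon}\arctan([x\wedge 0]^2)$, so that $w^1_M + z^{\epsilon} \uparrow v^1_M$ as in \cite{NP}. Let $(w^1_M)_{p,q}$ denote the smoothing of $w^1_M$ on $[0,T]\times[0,N]$ provided by Lemma \ref{smoothing} (which is stated for both $D=(0,1)$ and $D=(0,\infty)$), and let $z^{\epsilon}_{p,q}$ solve the analogous penalised equation with obstacle $-(w^1_M)_{p,q}$. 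Differentiating in time and space and applying the Dirichlet version of Lemma \ref{PDE BOund} bounds the infinity-norms of $\partial_t z^{\epsilon}_{p,q}$ and $\partial_x z^{\epsilon}_{p,q}$ on $[0,T]\times[0,N]$, uniformly in $\epsilon$, by those of $\partial_t(w^1_M)_{p,q}$ and $\partial_x(w^1_M)_{p,q}$ respectively. Choosing $p = |t-s|$, $q = |x-y|$, applying the estimates of Lemma \ref{smoothing} and letting $\epsilon \downarrow 0$ produces the required local H\"older regularity of $v^1_M$. The corresponding statement for $(u^1,u^2)$ up to the blow-up time $\tau$ then follows from the construction of the maximal solution via the truncated problems.

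The main obstacle will be the moving-boundary term in the mild representation for $w^1_M$. Its derivative kernel $\partial_y G$ carries a $(t-s)^{-1/2}$ singularity which, once H\"older's inequality is applied, forces $r$ to be taken large enough for the resulting deterministic integral to converge; in addition, the exponential weights $e^{-rx}$ and $e^{-(r+\delta)x}$ must be tracked carefully when $h_{M,r}$ and $F_{M,r}$ are evaluated in order for the H\"older moment bound to hold uniformly on $[0,T]\times[0,N]$. Because the conclusion is only local in space, the weighted Garsia--Rodemich--Rumsey-type bound of Lemma \ref{GRR} is not required here; ordinary Kolmogorov continuity on each compact rectangle suffices, which is why the theorem is stated only in its local form.
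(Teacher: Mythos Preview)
Your second step has a genuine gap. The penalised equation $\partial_t z^{\epsilon}_{p,q} = \Delta z^{\epsilon}_{p,q} + g_{\epsilon}(z^{\epsilon}_{p,q} + (w^1_M)_{p,q})$ is posed on the half-line $[0,\infty)$, not on $[0,N]$, so there is no boundary condition at $x=N$ and the bounded-domain maximum principle behind Lemma \ref{PDE BOund} does not apply on $[0,T]\times[0,N]$. Nor can you apply it on $[0,T]\times[0,\infty)$ directly: Lemma \ref{PDE BOund} requires $V, F$ bounded, but $w^1_M$ only lies in $\mathscr{C}_r^T$ and typically grows like $e^{rx}$. For the same reason, Lemma \ref{smoothing} with $D=(0,\infty)$ is of no use applied to $w^1_M$ itself: its conclusions (ii)--(iv) need a \emph{global} H\"older constant $K$, which $w^1_M$ does not possess.

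The paper deals with exactly this issue by passing to the weighted function $e^{-(r+\mu)x}w^1_M$ for some $\mu>0$. One first obtains, via Corollary A.3 of \cite{Dalang} on each strip $[m,m+1]$, random H\"older constants $\psi_m$ with $\sup_m \mathbb{E}[\psi_m^p]<\infty$, and then sums $\sum_m \psi_m e^{-m\mu}$ to produce a single $L^p$ random variable $X$ controlling the H\"older regularity of $e^{-(r+\mu)x}w^1_M$ \emph{globally} on $[0,T]\times[0,\infty)$. The smoothing is applied to this weighted function, and the penalised obstacle PDE is rewritten with obstacle $-e^{(r+\mu)x}(e^{-(r+\mu)x}w^1_M)^{\alpha,\beta}$. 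Crucially, one then invokes Proposition \ref{PDE BOund1}, a half-line analogue of Lemma \ref{PDE BOund} giving $\|V\|_{\mathscr{C}_{r+\mu}^T} \leq C_{r+\mu,T}\|F\|_{\mathscr{C}_{r+\mu}^T}$, to bound $\partial_t z^{\epsilon,\alpha,\beta}$ and $\partial_x z^{\epsilon,\alpha,\beta}$ in weighted norm uniformly in $\epsilon$. Local H\"older regularity of $v^1_M$ then follows by choosing $\alpha=|t-s|$, $\beta=|x-y|$ and letting $\epsilon\downarrow 0$. A smaller point: for the moving-boundary term in the moment bound for $w^1_M$ you need the difference estimates of Proposition \ref{HeatDerivativeEstimateInfinite}, not merely the pointwise bound of Proposition \ref{Infintie derivative bound}.
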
 

The proof of this is similar in spirit to that of Theorem \ref{Holder Continuity}. There are, however, some intricate differences which arise due to the infinite spatial domain. We first introduce here a modified version of Lemma \ref{PDE BOund} which is suitable for this context.

\begin{prop}\label{PDE BOund1}
Let $r >0$. Let $V \in C^{1,2}_b([0,T] \times [0,\infty))$ and $\psi,F \in C([0,T] \times [0,\infty))$ with $\psi \leq 0$
 bounded and $|F(t,x)| \leq Ke^{Rx}$ for some $K,R >0$. Suppose that 
\begin{equation}
\frac{\partial V}{\partial t}= \frac{1}{2}V^{''} + \psi V + \psi F
\end{equation}
with Dirichlet or Neumann boundary conditions at zero, and zero initial data. Then there exists a constant $C_{r,T}$ such that 
\begin{equation}
\|V\|_{\mathscr{C}_r^T} \leq C_{r,T} \|F\|_{\mathscr{C}_r^T}.
\end{equation}
\end{prop}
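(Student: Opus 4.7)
The plan is to reduce the bound to the parabolic comparison principle applied against an explicit supersolution of separated exponential form. The key observation is that for $F\in\mathscr{C}_r^T$ (otherwise the conclusion is vacuous), the right-hand side admits the pointwise control $|\psi(t,x)F(t,x)|\leq \|\psi\|_\infty\|F\|_{\mathscr{C}_r^T}\,e^{rx}$, so any supersolution with spatial profile $e^{rx}$ (or a symmetrization thereof) will dominate the forcing. I therefore look for $\bar V(t,x)=A(t)e^{rx}$ in the Dirichlet case, and $\bar V(t,x)=A(t)(e^{rx}+e^{-rx})$ in the Neumann case, the symmetric choice being forced to match $\partial_x\bar V(t,0)=0$.

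In both cases $\partial_{xx}\bar V=r^2\bar V$, so the natural ODE to impose on $A$ is
\begin{equation*}
A'(t)-\tfrac{r^2}{2}A(t)=\|\psi\|_\infty\|F\|_{\mathscr{C}_r^T},\qquad A(0)=0,
\end{equation*}
with solution $A(t)=\tfrac{2\|\psi\|_\infty\|F\|_{\mathscr{C}_r^T}}{r^2}\bigl(e^{r^2 t/2}-1\bigr)$. Then $(\partial_t-\tfrac12\partial_{xx})\bar V=\|\psi\|_\infty\|F\|_{\mathscr{C}_r^T}\,(\text{spatial profile})$, and since $-\psi\geq 0$ and $\bar V\geq 0$,
\begin{equation*}
(\partial_t-\tfrac12\partial_{xx}-\psi)\bar V\;\geq\; \|\psi\|_\infty\|F\|_{\mathscr{C}_r^T}\,e^{rx}\;\geq\;|\psi F|,
\end{equation*}
while $\bar V(0,\cdot)\equiv 0$ and the boundary condition at $x=0$ is satisfied (or dominated).

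Set $W:=V-\bar V$. Then $(\partial_t-\tfrac12\partial_{xx}-\psi)W\leq 0$ with $-\psi\geq 0$. Because $V\in C^{1,2}_b$ is uniformly bounded while $\bar V\to\infty$ as $x\to\infty$, for every $t\in[0,T]$ we have $W(t,N)<0$ once $N$ is large enough. On the truncated domain $[0,T]\times[0,N]$, the parabolic maximum principle therefore rules out a positive interior maximum, the initial data gives $W(0,\cdot)\equiv 0$, the Dirichlet value at $x=0$ gives $W(t,0)=-A(t)\leq 0$, and in the Neumann case the Hopf boundary point lemma, combined with $\partial_x\bar V(t,0)=0$, rules out a positive maximum at $x=0$. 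Hence $W\leq 0$ on $[0,T]\times[0,N]$; letting $N\to\infty$ yields $V\leq\bar V$. Repeating the argument for $-V$ (which satisfies the same equation with source $\psi(-F)$) gives $|V|\leq\bar V$, and dividing by $e^{rx}$ and taking the supremum produces $\|V\|_{\mathscr{C}_r^T}\leq C_{r,T}\|F\|_{\mathscr{C}_r^T}$ with $C_{r,T}=\tfrac{4\|\psi\|_\infty}{r^2}(e^{r^2 T/2}-1)$ (and the analogous constant in the Dirichlet case).

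The main technical subtlety will be the Neumann case: the naive supersolution $A(t)e^{rx}$ violates the Neumann condition at zero and, worse, allows $W$ in principle to attain its maximum at $x=0$ consistently with Hopf. Using the symmetrized profile $e^{rx}+e^{-rx}$ restores $\partial_x\bar V(t,0)=0$ without altering $\partial_{xx}\bar V=r^2\bar V$, so the same ODE for $A$ works, and Hopf then forces $\partial_x W(t_0,0)<0$ at any putative positive boundary maximum, contradicting the Neumann condition on $W$.
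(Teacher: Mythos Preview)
Your comparison-principle approach is a natural alternative to the paper's Feynman--Kac representation, but as written it has a genuine gap: the constant you produce is
\[
C_{r,T}=\tfrac{4\|\psi\|_\infty}{r^2}\bigl(e^{r^2T/2}-1\bigr),
\]
which depends on $\|\psi\|_\infty$. The notation $C_{r,T}$ in the statement is not cosmetic: the proposition is applied in the proof of the H\"older regularity theorem with $\psi=g'_\epsilon(\,\cdot\,)$, where $g_\epsilon(x)=\tfrac{1}{\epsilon}\arctan([x\wedge 0]^2)$, so $\|\psi\|_\infty=\|g'_\epsilon\|_\infty\to\infty$ as $\epsilon\downarrow 0$. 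A bound that blows up with $\|\psi\|_\infty$ is useless there. The paper avoids this by writing $-\int_0^t\exp(\int_0^s\psi)\psi\,ds=1-\exp(\int_0^t\psi)\in[0,1]$ inside the Feynman--Kac formula, which kills any dependence on the size of $\psi$.

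Your method can be repaired, and the fix is instructive: do not bound $|\psi F|$. Instead, exploit the sign structure $\psi\leq 0$ directly by choosing a supersolution $\bar V$ with $\bar V+F\geq 0$, so that $\psi(\bar V+F)\leq 0$. Concretely, take
\[
\bar V(t,x)=\|F\|_{\mathscr{C}_r^T}\,e^{r^2t/2}\cosh(rx)
\]
(the $\cosh$ handles both boundary conditions at once). Then $(\partial_t-\tfrac12\partial_{xx})\bar V=0$, and since $\bar V(t,x)\geq\|F\|_{\mathscr{C}_r^T}e^{rx}\geq|F(t,x)|$ one gets
\[
(\partial_t-\tfrac12\partial_{xx}-\psi)(V-\bar V)=\psi F+\psi\bar V=\psi(F+\bar V)\leq 0.
\]
Now $\bar V(0,x)\geq 0=V(0,x)$, the Dirichlet/Neumann conditions at $x=0$ are matched or dominated as in your argument, and $\bar V(t,N)\geq\|F\|_{\mathscr{C}_r^T}e^{rN}$ for \emph{all} $t\in[0,T]$ (this also removes the small-$t$ issue your original $A(0)=0$ created at the right boundary). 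The rest of your maximum-principle/Hopf argument then goes through unchanged and yields $\|V\|_{\mathscr{C}_r^T}\leq 2e^{r^2T/2}\|F\|_{\mathscr{C}_r^T}$, with a constant depending only on $r$ and $T$ as required.
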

\begin{proof}
We will prove the result for the Neumann boundary condition- the argument for the Dirichlet condition is essentially the same. Let $(B_t^x)_{t \geq 0}$ be a Brownian motion on $[0,\infty)$ with reflection at $0$, started at $x$. Then by arguing as in Lemma 3.6 in \cite{DalangMuller}, we have that 
\begin{equation}
V_t(x)= \mathbb{E} \left[ \int_0^t \exp \left(\int_0^s \psi_{t-r}(B_r^x) \textrm{d}r \right)\psi_{t-s}(B_s^x)F_{t-s}(B_s^x) \textrm{d}s \right].
\end{equation}
Therefore
\begin{equation}
e^{-rx}V_t(x)= \mathbb{E} \left[ \int_0^t \exp \left(\int_0^s \psi_{t-r}(B_r^x) \textrm{d}r \right)\psi_{t-s}(B_s^x)F_{t-s}(B_s^x)e^{-rB_s^x} (e^{rB_s^x-rx}) \textrm{d}s \right].
\end{equation}
Hence
\begin{equation}\label{BM} \begin{split}
|e^{-rx}V_t(x)| \leq & \|F\|_{\mathscr{C}_r^T} \times \mathbb{E} \left[ - \int_0^t \exp \left(\int_0^s \psi_{t-r}(B_r^x) \textrm{d}r \right)\psi_{t-s}(B_s^x) (e^{rB_s^x-rx}) \textrm{d}s \right] \\ \leq & \|F\|_{\mathscr{C}_r^T} \times \mathbb{E} \left[ - \int_0^t \exp \left(\int_0^s \psi_{t-r}(B_r^x) \textrm{d}r \right)\psi_{t-s}(B_s^x) \textrm{d}s \left(\sup\limits_{s \in [0,t]}(e^{rB_s^x-rx})\right) \right]  \\ \leq & \|F\|_{\mathscr{C}_r^T} \times \mathbb{E}\left[\left(1-\exp\left(\int_0^t \psi_{t-r}(B_r^x) \textrm{d}r\right) \right)\left(\sup\limits_{s \in [0,t]}(e^{rB_s^x-rx})\right) \right] \\ \leq & \|F\|_{\mathscr{C}_r^T} \times \mathbb{E}\left[\sup\limits_{s \in [0,t]}(e^{rB_s^x-rx}) \right] \leq \|F\|_{\mathscr{C}_r^T} \times \mathbb{E} \left[\sup\limits_{s \in [0,T]}(e^{rB_s^x-rx}) \right] \\ = & \|F\|_{\mathscr{C}_r^T} \times e^{-rx} \times \mathbb{E}\left[ \sup\limits_{s \in [0,T]} e^{rB_s^x} \right].
\end{split}
\end{equation}
We note that the law of $B^x$ is simply the law of $|W^x|$, where $W$ is a standard Brownian motion (no reflection) started from $x$. Therefore 
\begin{equation}
e^{-rx}\mathbb{E}\left[\sup\limits_{s \in [0,T]} e^{rB_s^x}\right] =e^{-rx} \mathbb{E}\left[\sup\limits_{s \in [0,T]} \left( e^{rW_s^x}\mathbbm{1}_{\left\{ W_s^x \geq 0 \right\}}+ e^{-rW_s^x}\mathbbm{1}_{\left\{ -W_s^x \geq 0 \right\}} \right) \right].
\end{equation}
By the symmetry of Brownian motion, this is at most 
\begin{equation}
2e^{-rx}\mathbb{E}\left[\sup\limits_{s \in [0,T]} e^{rW_s^x} \right] = 2\mathbb{E}\left[\sup\limits_{s \in [0,T]} e^{rW_s^0} \right] \leq 2 e^{Tr^2/2} \times \mathbb{E}\left[ \sup\limits_{s \in[0,T]} e^{rW_s^0-sr^2/2} \right].
\end{equation}
Since $e^{rW_t^0 - tr^2/2}$ is a square integrable martingale, we apply Cauchy-Schwarz and Doob's $L^2$ inequality to get 
\begin{equation}
2\mathbb{E}\left[\sup\limits_{s \in [0,T]} e^{rW_s^0-sr^2/2} \right] \leq 2\mathbb{E}\left[\sup\limits_{s \in [0,T]} e^{2rW_s^0-sr^2} \right]^{\frac{1}{2}} \leq 4 \mathbb{E}\left[ e^{2rW_T^0 - Tr^2} \right]^{\frac{1}{2}}= 4 e^{Tr^2/2}
\end{equation}
Therefore, we have that 
\begin{equation}
e^{-rx}\mathbb{E}\left[\sup\limits_{s \in [0,T]} e^{rB_s^x}\right] \leq 4e^{Tr^2}.
\end{equation}
Plugging this into (\ref{BM}), we obtain that
\begin{equation}
\|V\|_{\mathscr{C}_r^T} \leq 4e^{Tr^2} \|F\|_{\mathscr{C}_r^T}.
\end{equation}
So we have the result.
\end{proof}

\begin{proof}[Proof of Theorem \ref{Holder Continuity2}]
The argument broadly follows the steps in the proof of Theorem \ref{Holder Continuity} and consequently those in \cite{Dalang}. Fix some $T>0$. Let $(v_M^1, v_M^2)$ solve the $M$-truncated problem. Define $w_M^1$ so that 
\begin{equation}\begin{split}
w^1_{M}(t,x)= & \int_0^{\infty} G(t,x,y)v_0^1(y) \textrm{d}y \\ & - \int_0^t \int_0^{\infty} \frac{\partial G}{\partial y}(t-s,x,y) h_{M,r}(v^1_{M}(s,\cdot), v^2_{M}(s,\cdot)) F_{M,r}(v^1_{M}(s,\cdot))(y) \textrm{d}y \textrm{d}s \\ & + \int_0^t \int_0^{\infty} G(t-s,x,y)f_1(y,v^1_{M}(s,y)) \textrm{d}x\textrm{d}s \\ & + \int_0^t \int_0^{\infty} G(t-s,x,y) \sigma_1(y,v^1_{M}(s,y)) \textrm{W}(\textrm{d}y,\textrm{d}s).
\end{split}
\end{equation}
Let $\xi=2\gamma+12$, so that $\gamma=\xi/2 -6$. By applying the inequalities from Propositions \ref{Exponential heat kernel estimates} and \ref{HeatDerivativeEstimateInfinite}, together with Burkholder's inequality, we see that 
\begin{equation}
\mathbb{E}\left[|e^{-rx}w^1_M(t,x)-e^{-ry}w^1_M(s,y)|^{\xi} \right] \leq C(|t-s|^{1/2}+|x-y| )^{\frac{\xi}{2}-2}.
\end{equation}
Applying Corollary A.3 from \cite{Dalang}, we find random variables $\psi_m$ such that $\sup\limits_{m \in \mathbb{N}}\mathbb{E}[(\psi_m)^p] < \infty,$ and for $t,s \in[0,T]$, $x,y \in [m,m+1]$ 
\begin{equation}
|e^{-rx}w_M^1(t,x)-e^{-ry}w_M^1(s,y)| \leq \psi_m( |t-s|^{\frac{\gamma}{2}}+ |x-y|^{\gamma}).
\end{equation}
Let $\mu >0$. We note that, for $x,y \in [m,m+1]$ and $s,t \in [0,T]$, 
\begin{equation}\begin{split}
|e^{-(r+ \mu)x}w_M^1(t,x)-e^{-(r+\mu)y}w_M^1(s,y)| \leq & e^{-m\mu}|e^{-rx}w_M^1(t,x)-e^{-ry}w_M^1(s,y)|\\ & + |e^{-rx}w_M^1(t,x)||e^{-y\mu}-e^{-x\mu}|.
\end{split}
\end{equation}
By considering the derivative of $e^{-\mu x}$ we see that 
\begin{equation}
|e^{-y\mu}-e^{-x\mu}| \leq \mu|x-y|e^{-m\mu}.
\end{equation}
Therefore
\begin{equation}\begin{split}
|e^{-(r+\mu)x}w_M^1(t,x)-e^{-(r+\mu)y}w_M^1(s,y)| \leq & e^{-m\mu}|e^{-rx}w_M^1(t,x)-e^{-ry}w_M^1(s,y)|\\ & + \mu|x-y|e^{-m\mu}|e^{-rx}w_M^1(t,x)|.
\end{split}
\end{equation}
Define the random variable $$Z:= \mu \|w_M^1\|_{\mathscr{C}_r^T}.$$ Then we know that for $p \geq 1$, $\mathbb{E}\left[Z^p \right]< \infty.$ Setting $Y_m:=\psi_m + Z$ we have that $R:=\sup\limits_{m \geq 0} \mathbb{E}[Y_m^p] < \infty.$
Therefore, for any $t,s \in [0,T]$ and $x,y \geq 0$ we have that 
\begin{equation}\label{abc}\begin{split}
|e^{-(r+\mu) x}w_M^1(t,x)-e^{-(r+\mu) y}w_M^1(s,y)| \leq \left(\sum\limits_{m=0}^{\infty} Y_me^{-m \mu} \right) \times (|t-s|^{\frac{\gamma}{2}}+|x-y|^{\gamma}) .
\end{split}
\end{equation}
Define $$X:= \sum\limits_{m=0}^{\infty} Y_me^{-m\mu}.$$ Then 
\begin{equation}\begin{split}
\mathbb{E}\left[X^p \right]= \mathbb{E}\left[ \left(\sum\limits_{m=0}^{\infty} Y_me^{-m\mu}\right)^p \; \right] \leq & \mathbb{E}\left[ \sum\limits_{m=0}^{\infty} Y_m^pe^{-m\mu p/2} \right]\times \left( \sum\limits_{m=0}^{\infty} e^{-m\mu q/2} \right)^{\frac{p}{q}} \\ = & C_{r,p,\mu} \sum\limits_{m=0}^{\infty} \mathbb{E}[Y_m^p]e^{-mp\mu /2} = C_{r,p,\mu,R} < \infty. 
\end{split}
\end{equation}
Now let $z^{\epsilon}$ solve the PDE 
\begin{equation}\label{obstacleinf}
\frac{\partial z^{\epsilon}}{\partial t}= \Delta z^{\epsilon}+ g_{\epsilon}(z^{\epsilon} + w_M^1)
\end{equation}
on $[0,\infty)$ with Dirichlet boundary condition at zero, and zero initial data, where we define $$g_{\epsilon}(x):= \frac{1}{\epsilon} \arctan([x \wedge 0]^2).$$ Then by Proposition \ref{infiniteobstacleexist}, $z^{\epsilon}+w_M^1$ increases to $v_M^1$, the solution of the reflected SPDE on $[0,\infty)$. Let $(e^{-(r+\mu) x}w_M^1)^{\alpha, \beta}$ be a smoothing of $e^{-(r+\mu) x}w_M^1$ as in Proposition \ref{smoothing}, with respect to the H\"{o}lder coefficients $\gamma/2$ and $\gamma/4$. Define $z^{\epsilon, \alpha, \beta}$ to be the solution of the PDE

\begin{equation}\label{obstaclesmoothinf}
\frac{\partial z^{\epsilon,\alpha, \beta}}{\partial t}= \Delta z^{\epsilon,\alpha, \beta}+ g_{\epsilon}(z^{\epsilon,\alpha, \beta} + e^{(r+ \mu) x}(e^{-(r+\mu) x}w_M^1)^{\alpha, \beta})
\end{equation}
with Dirichlet boundary condition at zero and and zero initial data. By Proposition \ref{obstaclepenalisationcontrol}, we obtain that 
\begin{equation}
\|z^{\epsilon}\|_{\mathscr{C}_{r+\mu}^T} \leq C_{r,T}\|w_M^1\|_{ \mathscr{C}_{r+\mu}^T},
\end{equation}
and
\begin{equation}
\|z^{\epsilon,a,b}\|_{\mathscr{C}_{r+\mu}^T} \leq \|(e^{-(r+\mu)x}w_M^1)^{\alpha, \beta}\|_{T, \infty}.
\end{equation}
Differentiating (\ref{obstaclesmoothinf}) with respect to $t$, we see that $q^{\epsilon,\alpha, \beta}:=\frac{\partial z^{\epsilon,\alpha, \beta}}{\partial t}$ solves
\begin{equation}
\frac{\partial q^{\epsilon,\alpha, \beta}}{\partial t}= \Delta q^{\epsilon,\alpha, \beta} + g^{\prime}_{\epsilon}(z^{\epsilon,\alpha, \beta} + e^{(r+\mu)x}(e^{-(r+\mu)x}w_M^1)^{\alpha, \beta})\left[q^{\epsilon,\alpha, \beta} + e^{(r+\mu)x}\frac{\partial(e^{-(r+\mu)x}w_M^1)^{\alpha, \beta}}{\partial t}  \right].
\end{equation}    
The boundary condition is Dirichlet, since $z$ does not change at 0, which means the time derivative is zero there. The initial data is $z_0^{\prime \prime}=0$, since $z_0$ is identically zero.  Note that $g^{\prime}_{\epsilon}$ is negative, so we can use Proposition \ref{PDE BOund1} to deduce that 
\begin{equation}
\| q^{\epsilon,\alpha, \beta}\|_{\mathscr{C}_{r+\mu}^T} \leq C_{r+\mu,T} \left\| \frac{\partial (e^{-(r+\mu)x}w_M^1)^{\alpha, \beta}}{\partial t}\right\|_{\infty,T} .
\end{equation}
Differentiating (\ref{obstaclesmoothinf}) with respect to $x$, we obtain, that $y^{\epsilon,\alpha, \beta}:= \frac{\partial z^{\epsilon,\alpha, \beta}}{\partial x}$ satisfies
\begin{equation}
\begin{split}
\frac{\partial y^{\epsilon,\alpha, \beta}}{\partial x}= &  \Delta y^{\epsilon,\alpha, \beta}+ g^{\prime}_{\epsilon}(z^{\epsilon,\alpha, \beta} + e^{(r+\mu)x}(e^{-(r+\mu)x}w_M^1)^{\alpha, \beta}) \left[y^{\epsilon,\alpha, \beta} \right. \\ &  \left. + e^{(r+\mu)x}\frac{\partial (e^{-(r+\mu)x}w_M^1)^{\alpha, \beta}}{\partial x}+ (r+\mu)e^{(r+\mu)x} (e^{-(r+\mu)x}w_M^1)^{\alpha, \beta} \right].
\end{split}
\end{equation} 
with initial data $z_0^{\prime}=0$ and Neumann boundary condition at zero. Proposition \ref{PDE BOund1} then gives  
\begin{equation}
\|y^{\epsilon,\alpha, \beta}\|_{\mathscr{C}_{r+\mu}^T} \leq C_{r+\mu,T}\left( \left\|\frac{\partial (e^{-(r+\mu)x}w_M^1)^{\alpha, \beta}}{\partial x} \right\|_{\infty,T} +\|(e^{-(r+\mu)x}w_M^1)^{\alpha, \beta}\|_{\infty,T} \right).
\end{equation}
Another application of Proposition \ref{obstaclepenalisationcontrol} gives
\begin{equation}
\|z^{\epsilon}-z^{\epsilon,\alpha,\beta}\|_{\mathscr{C}_{r+\mu}^T} \leq \|e^{-(r+\mu)x}w_M^1-(e^{-(r+\mu)x}w_M^1)^{\alpha, \beta} \|_{\infty,T}.
\end{equation}
We clearly have that 
\begin{equation}\begin{split}
|e^{-(r+\mu)x}z^{\epsilon}(t,x)-e^{-(r+\mu)y}z^{\epsilon}(s,y)| \leq & 2\|e^{-(r+\mu)x}z^{\epsilon} - e^{-(r+\mu)x} z^{\epsilon,\alpha, \beta}\|_{\infty,T} \\ & + |e^{-(r+\mu)x}z^{\epsilon,\alpha, \beta}(t,x)-e^{-(r+\mu)x}z^{\epsilon,\alpha, \beta}(s,x)| \\ & + |e^{-(r+\mu)x}z^{\epsilon,\alpha, \beta}(s,x)-e^{-(r+\mu)y}z^{\epsilon,\alpha, \beta}(s,y)|.  
\end{split}
\end{equation}
By the estimates deduced above, we can bound this by
\begin{multline}
2C_{r+\mu,T}\|e^{-(r+\mu)x}w_M^1 - (e^{-(r+\mu)x} w_M^1)^{\alpha, \beta}\|_{\infty,T}  + |t-s|\left\| \partial z^{\epsilon,\alpha, \beta}/\partial t \right\|_{\mathscr{C}_{r+\mu}^T} \\ + |x-y| \left\| \partial (e^{-(r+\mu)x} z^{\epsilon,\alpha, \beta})/ \partial x \right\|_{\infty,T}.
\end{multline}
This is at most 
\begin{equation*}\begin{split}
2 & \|e^{-(r+\mu)x}w_M^1 - (e^{-(r+\mu)x} w_M^1)^{\alpha, \beta}\|_{\infty,T} + C_{r+\mu,T}|t-s| \left\| \frac{\partial (e^{-(r+\mu)x}w_M^1)^{\alpha, \beta}}{\partial t}\right\|_{\infty,T}\\ & + |x-y| \left( \left\|\frac{\partial z^{\epsilon,\alpha, \beta}}{\partial x} \right\|_{\mathscr{C}_{r+\mu}^T}+ \|(r+\mu)z^{\epsilon,\alpha, \beta}\|_{\mathscr{C}_{r+\mu}^T} \right) \\ \leq & 
2\|e^{-(r+\mu)x}w_M^1 - (e^{-(r+\mu)x} w_M^1)^{\alpha, \beta}\|_{\infty} + C_{r+\mu,T}|t-s|\left\| \frac{\partial (e^{-(r+\mu)x}w_M^1)^{\alpha, \beta}}{\partial t}\right\|_{\infty,T} \\ & + C_{r+\mu,T}|x-y| \left( \left\|\frac{\partial (e^{-(r+\mu)x}w_M^1)^{\alpha, \beta}}{\partial x} \right\|_{\infty,T}+ \|w_M^1\|_{\mathscr{C}_{r+\mu}^T} \right).
\end{split}
\end{equation*}
Making the choices $a= |t-s|$ and $b=|x-y|$, this is at most 
\begin{equation}
C_{\gamma,T,r+\mu}\left(X|t-s|^{\gamma /2} + X|x-y|^{\gamma} + |x-y|\|w_M^1 \|_{\mathscr{C}_{r+\mu}^T}  \right).
\end{equation}
Since $X$ and $\|w_M^1 \|_{\mathscr{C}_{r+\mu}^T}$ are in $L^p$, they are finite almost surely. Letting $\epsilon \downarrow 0$ and noting the inequality (\ref{abc}) gives that, for every $x,y \geq 0$ and every $s,t \in [0,T]$
\begin{equation}\label{holder45}
|e^{-(r+\mu)x}v_M^1(t,x)-e^{-(r+\mu)y}v_M^1(s,y)| \leq C_{\gamma,T,r+\mu}\left(X|t-s|^{\gamma /2} + X|x-y|^{\gamma} + |x-y|\|w_M^1 \|_{\mathscr{C}_{r+\mu}^T}  \right)
\end{equation}
almost surely. Since we know that $\|v_M^1\|_{\mathscr{C}_{r+\mu}^T} < \infty$ almost surely, it follows that $v_M^1$ is locally $\gamma /4$ H\"{o}lder in time and $\gamma/2$ H\"{o}lder in space almost surely.
\end{proof}
\begin{cor}\label{HolderPrice2}
Suppose that there exists $\mu >0$ such that for every $u_1,u_2,v_1,v_2 \in \mathscr{C}_r$,
\begin{equation}
|h(u_1,v_1)-h(u_2,v_2)| \leq K( \|u_1- u_2\|_{\mathscr{C}_{r+ \mu}}+ \|v_1-v_2\|_{\mathscr{C}_{r+ \mu}}).
\end{equation}
Then, for every $\gamma \in (0,1)$, the derivative of the boundary is locally $\gamma/4$-H\"{o}lder continuous on $[0, \tau)$, where
\begin{equation}
\tau := \inf \left\{ t \geq 0 \; \big| \; \|u^1\|_{\mathscr{C}_r} + \|u^2\|_{\mathscr{C}_r} = \infty \right\}.
\end{equation}
\end{cor}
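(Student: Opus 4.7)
The plan is to mimic the argument used for Corollary~\ref{HolderPrice} in the compact setting, combining the Lipschitz property of $h$ with the H\"older regularity of the solution in the relative frame established in Theorem~\ref{Holder Continuity2}. The new subtlety, and the reason for the strengthened hypothesis $\mu>0$ in the assumption on $h$, is that the H\"older regularity for $v^1_M$ and $v^2_M$ coming from the proof of Theorem~\ref{Holder Continuity2} is only available in the weighted norm $\mathscr{C}_{r+\mu}$ rather than $\mathscr{C}_r$; indeed, inequality~(\ref{holder45}) in that proof controls $|e^{-(r+\mu)x}v^i_M(t,x)-e^{-(r+\mu)y}v^i_M(s,y)|$, not the $\mathscr{C}_r$-weighted increment.

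Fix $\gamma\in(0,1)$. First, I would introduce for each $M>0$ the stopping time
\begin{equation*}
\tau_M := \inf\bigl\{t\geq 0 \;\big|\; \|v^1\|_{\mathscr{C}_r^t} + \|v^2\|_{\mathscr{C}_r^t} \geq M\bigr\},
\end{equation*}
so that $\tau_M\uparrow \tau$ as $M\to\infty$ by the characterisation of $\tau$ from Theorem~\ref{Main2}. By consistency of the truncated problems (as used in the proof of Theorem~\ref{Main2}), on $[0,\tau_M]$ the solution $(v^1,v^2)$ to the moving boundary problem in the relative frame coincides with the solution $(v^1_M,v^2_M)$ to the $M$-truncated problem.

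Next, I would apply the enhanced Lipschitz hypothesis on $h$ to write, for $s,t\in[0,\tau_M]$,
\begin{equation*}
|p'(t)-p'(s)| \leq K\bigl(\|v^1(t,\cdot)-v^1(s,\cdot)\|_{\mathscr{C}_{r+\mu}}+\|v^2(t,\cdot)-v^2(s,\cdot)\|_{\mathscr{C}_{r+\mu}}\bigr)
= K\bigl(\|v^1_M(t,\cdot)-v^1_M(s,\cdot)\|_{\mathscr{C}_{r+\mu}}+\|v^2_M(t,\cdot)-v^2_M(s,\cdot)\|_{\mathscr{C}_{r+\mu}}\bigr).
\end{equation*}
The first-term inequality in~(\ref{holder45}), evaluated at $x=y$, yields (for each $i=1,2$) an almost sure bound
\begin{equation*}
\|v^i_M(t,\cdot)-v^i_M(s,\cdot)\|_{\mathscr{C}_{r+\mu}} \leq C_{\gamma,T,r+\mu}\, X_i\,|t-s|^{\gamma/4}
\end{equation*}
for $s,t\in[0,T]$, with a finite random variable $X_i$ (taking the exponent $\gamma/4$ amounts to applying the theorem with the parameter $\gamma$ replaced by a suitable value in $(0,1)$). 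Plugging this into the previous display gives local $\gamma/4$-H\"older continuity of $p'$ on $[0,T\wedge \tau_M]$, almost surely.

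Finally, since $\tau_M\uparrow \tau$ almost surely and $T>0$ is arbitrary, sending $M\to\infty$ yields local $\gamma/4$-H\"older continuity of $p'$ on the whole random interval $[0,\tau)$. The only non-routine point is the observation that the strengthened Lipschitz assumption on $h$ is exactly what is needed to couple with the weighted-norm regularity delivered by Theorem~\ref{Holder Continuity2}; everything else is direct.
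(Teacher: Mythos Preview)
Your proposal is correct and follows essentially the same route as the paper: define the stopping times $\tau_M\uparrow\tau$, use the strengthened Lipschitz hypothesis on $h$ in the $\mathscr{C}_{r+\mu}$-norm, and invoke the time-H\"older estimate from inequality~(\ref{holder45}) in the proof of Theorem~\ref{Holder Continuity2} for the truncated solutions $(v^1_M,v^2_M)$. Your explicit remark that the hypothesis $\mu>0$ is precisely what allows one to pass from the $\mathscr{C}_{r+\mu}$-weighted regularity delivered by (\ref{holder45}) to the Lipschitz bound on $h$ is a useful clarification that the paper leaves implicit.
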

\begin{proof}
Fix $\gamma \in (0,1)$. Recall that $p^{\prime}(t)= h(v^1(t, \cdot), v^2(t,\cdot))$, where $v^1(t,x)= u^1(t,p(t)- \cdot)$ and $v^2(t,x)=u^2(t,p(t)+ \cdot)$. For $M >0$, define
\begin{equation}
\tau_M:=  \inf \left\{ t \geq 0 \; \big| \; \|u^1\|_{\mathscr{C}_r} + \|u^2\|_{\mathscr{C}_r} > M \right\} 
\end{equation}
Note that $\tau_M \uparrow \tau$ as $M \rightarrow \infty$. Let $t, s \in [0,\tau_M]$.
We have, by the Lipschitz property of $h$, that
\begin{equation}
|p^{\prime}(t)- p^{\prime}(s)| \leq K \left( \|v^1(t, \cdot) - v^1(s, \cdot)\|_{\mathscr{C}_{r+ \mu}} + \|v^2(t, \cdot) - v^2(s, \cdot)\|_{\mathscr{C}_{r+ \mu}} \right). 
\end{equation}
By equation (\ref{holder45}) in the proof of Theorem \ref{Holder Continuity2}, there exists an almost surely finite random variable $C_M$ such that this is at most $KC_M|t-s|^{\gamma/4}$, which gives the result.
\end{proof}

\begin{rem}
Choosing $h=0$, we see that in particular the solutions to the reflected SPDEs 
\begin{equation}
\frac{\partial u}{\partial t} = \Delta u + f(x,u) + \sigma(x,u)\dot{W} + \eta
\end{equation}
on $[0,\infty) \times [0,\infty)$ (or $[0,\infty) \times \mathbb{R}$ by the same arguments) are locally up to $1/4$-H\"{o}lder continuous in time and $1/2$-H\"{o}lder continuous in space.
\end{rem}

\textbf{Acknowledgements.} 
The research of J. Kalsi was supported by EPSRC (EP/L015811/1).

\appendix 

\section{Heat Kernel Estimates}
We present here some of the simple estimates for the heat kernels on $[0,1]$ and $[0,\infty)$ which were used throughout, and details of their proofs.

\subsection{Heat Kernel on $[0,\infty)$}
Recall that the Dirichlet heat kernels on $[0,\infty)$ is given by
\begin{equation}
G(t,x,y):= \frac{1}{\sqrt{4\pi t}} \left[ \exp\left(- \frac{(x-y)^2}{4t} \right)- \exp\left(- \frac{(x+y)^2}{4t} \right) \right],
\end{equation}
and that $G_r(t,x,y):= e^{-r(x-y)}G(t,x,y).$
We define the functions $$F_1(t,x,y):= \frac{1}{\sqrt{4 \pi t}}\exp\left(- \frac{(x-y)^2}{4t} \right),$$ and $$F_2(t,x,y):= \frac{1}{\sqrt{4 \pi t}}\exp\left(- \frac{(x+y)^2}{4t} \right).$$ 
In this section, the proofs focus on the $F_1$ component of $G$. The arguments for the $F_2$ components are similar. 

\begin{prop}\label{Exponential heat kernel estimates}
Fix $r \in \mathbb{R}$, $T>0$. Then, for $p>4$, we have that 
\begin{enumerate}
\item For every $t,s \in [0,T]$, $$\sup\limits_{x \geq 0} \left(\int_s^t \left[ \int_0^{\infty} G_r(t-u,x,z)^2 \textrm{d}z \right]^{p/(p-2)} \textrm{d}u \right)^{\frac{p-2}{2}} \leq C_{p,r,T} |t-s|^{(p-4)/4}.$$
\item For every $t,s \in[0,T]$, 
$$\sup\limits_{x \geq 0}\left(\int_0^s \left[ \int_0^{\infty} \left|G_r(t-u,x,z)-G_r(s-u,x,z) \right|^2 \textrm{d}z \right]^{p/(p-2)} \textrm{d}u \right)^{\frac{p-2}{2}} \leq C_{p,r,T} |t-s|^{(p-4)/4}.$$
\item For every $x,y \in [0,\infty)$
$$\sup\limits_{s \in [0,T]}\left(\int_0^s \left[ \int_0^{\infty} \left|G_r(s-u,x,z)-G_r(s-u,y,z) \right|^2 \textrm{d}z \right]^{p/(p-2)} \textrm{d}u \right)^{\frac{p-2}{2}} \leq C_{p,r,T} |x-y|^{(p-4)/2}.$$
\end{enumerate}
\end{prop}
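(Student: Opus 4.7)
The plan is to reduce each of the three bounds to a pointwise-in-$u$ estimate of an $L^2(\mathrm{d}z)$-norm of $G_r(\tau,x,\cdot)$ or one of its differences, and then handle the outer $\mathrm{d}u$ integral by an elementary one-variable calculation. The exponential weight $e^{-r(x-z)}$ will be absorbed uniformly in $x\geq 0$ by completing the square, contributing only a constant depending on $r$ and $T$.

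For (1), I would first establish $\int_0^\infty G_r(\tau,x,z)^2\,\mathrm{d}z \leq C_{r,T}/\sqrt{\tau}$ uniformly in $x\geq 0$ and $\tau\in(0,T]$. This follows from the pointwise bound $0\leq G(\tau,x,z)\leq F_1(\tau,x,z)$ (the half-line Dirichlet kernel is dominated by the free-space kernel) together with a completion-of-squares Gaussian calculation in which the shift produced by the weight $e^{-2r(x-z)}$ has absolute value at most $2|r|T$, so the resulting Gaussian integral over $[0,\infty)$ is bounded by the full-line one, giving $\leq e^{2r^2T}/\sqrt{8\pi\tau}$. Substituting into (1) yields a constant times $\int_s^t (t-u)^{-p/(2(p-2))}\,\mathrm{d}u$, which for $p>4$ equals a constant times $(t-s)^{(p-4)/(2(p-2))}$; raising to the $(p-2)/2$-power produces the claimed exponent $(p-4)/4$.

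For (2) and (3) the pointwise bounds must take a min-form, because we are bounding a difference and $G$ can no longer simply be dominated by $F_1$. I would use the reflection identity $G(\tau,x,z)=F_1(\tau,x,z)-F_1(\tau,x,-z)$ to split each kernel difference into differences of weighted free-space Gaussians, then expand each square and use the Chapman--Kolmogorov identity
\[
F_1(\tau_1,x,z)F_1(\tau_2,x,z) \;=\; \frac{1}{\sqrt{4\pi(\tau_1+\tau_2)}}\, F_1\!\left(\frac{\tau_1\tau_2}{\tau_1+\tau_2},\,x,\,z\right)
\]
to reduce everything to further completion-of-squares Gaussian integrals. This yields the dichotomy estimates
\[
\int_0^\infty |G_r(\tau_1,x,z)-G_r(\tau_2,x,z)|^2\,\mathrm{d}z \;\leq\; C_{r,T}\,\min\!\left(\tau_2^{-1/2},\,(\tau_1-\tau_2)^2\tau_2^{-5/2}\right),
\]
\[
\int_0^\infty |G_r(\tau,x,z)-G_r(\tau,y,z)|^2\,\mathrm{d}z \;\leq\; C_{r,T}\,\min\!\left(\tau^{-1/2},\,(x-y)^2\tau^{-3/2}\right),
\]
where the second follows from the explicit closed form $\frac{1}{\sqrt{2\pi\tau}}(1-e^{-(x-y)^2/(8\tau)})$ for the corresponding $F_1$-integral combined with $1-e^{-a}\leq \min(1,a)$, and the first from the analogous closed form together with a Taylor expansion around $\tau_1=\tau_2$.

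Given these dichotomy bounds, the outer integral is handled by splitting the $u$-domain at the natural crossover between the two branches of the minimum: $s-u\sim|t-s|$ for (2), and $s-u\sim|x-y|^2$ for (3). On each subinterval one applies the corresponding branch and computes a simple power-law integral; both subintervals turn out to contribute the same scaling, yielding $(t-s)^{(p-4)/(2(p-2))}$ and $|x-y|^{(p-4)/(p-2)}$ respectively, which on taking the $(p-2)/2$-power give $|t-s|^{(p-4)/4}$ and $|x-y|^{(p-4)/2}$ as required; the hypothesis $p>4$ is used precisely to ensure integrability in the ``$1/\sqrt{s-u}$'' regime. The main technical obstacle will be maintaining uniformity in $x$ (and $y$) of the pointwise dichotomy bounds in the presence of both the exponential weight and the truncation of the domain to $[0,\infty)$; this reduces, after completion of squares, to observing that the shifted-Gaussian integrals produced along the way are uniformly bounded in $x\geq 0$ because the shifts are always $O(|r|T)$, independent of $x$.
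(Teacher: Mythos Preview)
Your proposal is correct and will yield the stated bounds, but it takes a genuinely different route from the paper's own argument. For (1) the two proofs coincide: both absorb the weight via the shift identity $e^{-r(x-z)}F_1(\tau,x,z)=e^{r^2\tau}F_1(\tau,x,z-2r\tau)$ (your ``completion of squares''), bound the half-line integral by the full-line one, and integrate the resulting $(t-u)^{-p/(2(p-2))}$. For (2) and (3), however, the paper does \emph{not} compute the inner $L^2(\mathrm{d}z)$-integral in closed form. Instead, after the shift it performs scaling substitutions---$v=u/|t-s|$, $\tilde z=z/\sqrt{|t-s|}$ for (2), and $w=h/|x-y|$, $v=u/(x-y)^2$ for (3)---which extract the factors $|t-s|^{(p-4)/(2(p-2))}$ and $|x-y|^{(p-4)/(p-2)}$ directly and leave a single dimensionless double integral whose convergence for $p>4$ is checked once. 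Your approach instead computes the inner integral explicitly via the semigroup/Chapman--Kolmogorov identity, derives the two-branch $\min$-bounds, and then splits the outer $u$-integral at the crossover scale; this is more hands-on but equally rigorous, and arguably gives a clearer picture of where the exponent comes from. One point to watch in your execution of (2): after the shift the two Gaussians carry different spatial translates $z-2r\tau_1$ versus $z-2r\tau_2$, so you will need an intermediate add--subtract to separate the pure time-increment (to which your dichotomy bound applies) from a residual of order $|r||t-s|$ times a single Gaussian, which contributes a harmless $|t-s|^p$. The paper makes the analogous decomposition explicitly. The $F_2$ component is handled in both approaches by the same mechanism; just note that for $F_2$ the completed-square shift does involve $x$, but the resulting Gaussian tail over $z\geq 0$ still gives a bound uniform in $x\geq 0$.
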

\begin{proof}[Proof of 1]
Note that 
\begin{equation}\label{F1}
e^{-r(x-z)}F_1(t,x,z)= e^{r^2 t} F_1(t,x+2 r t,z)= e^{r^2 t} F_1(t,x,z- 2rt).
\end{equation}
Therefore, 
\begin{equation}\begin{split}
\int_s^t \left[ \int_0^{\infty} \left|e^{-r(x-z)}F_1(t-u,x,z)\right|^2 \textrm{d}z \right]^{\frac{p}{p-2}} \textrm{d}u  & \leq C_{p,r,T} \int_s^t \left[ \int_{\mathbb{R}} \left|F_1(u,x,z) \right|^2 \textrm{d}z \right]^{p/(p-2)} \textrm{d}u \\ & =  C_{p,r,T} \int_s^t \left[ \int_{\mathbb{R}}  \frac{1}{4 \pi u}\exp\left( - \frac{(x-z)^2}{4u} \right)\textrm{d}z \right]^{p/(p-2)} \textrm{d}u \\ & \leq C_{p,r,T} \int_s^t u^{-\frac{p}{2(p-2)}} \textrm{d}u. 
\end{split}
\end{equation}
If $p>4$, we have that $-\frac{p}{2(p-2)}>-1$, and so this is equal to
\begin{equation}
C_{p,r,T} |t-s|^{\frac{p-4}{2(p-2)}}.
\end{equation}
The result follows.
\end{proof}
\begin{proof}[Proof of 2]
We again make use of equation (\ref{F1}). This gives that 
\begin{equation} \begin{split}
\left|e^{-r(x-z)}F_1(t,x,z)-e^{-r(x-z)}F_1(s,x,z) \right| \leq & e^{Tr^2}|F_1(t,x,z-2rt)-F_1(s,x,z-2rt)| \\ & + F_1(s,x,z-2rt)|e^{r^2t}-e^{r^2 s} | \\ \leq & e^{Tr^2}|F_1(t,x,z-2rt)-F_1(s,x,z-2rt)| \\ & + e^{Tr^2} F_1(s,x,z-2rt)|t-s| .
\end{split}
\end{equation}
Therefore,
\begin{equation}\begin{split}
& \left(  \int_0^s \left[ \int_0^{\infty} e^{-r(x-z)}|F_1(t-u,x,z)-F_1(s-u,x,z)|^2 \textrm{d}z \right]^{p/(p-2)} \textrm{d}u \right)^{\frac{p-2}{2}} \\ &  \leq C_{p,r,T}\left( \int_0^s \left[ \int_0^{\infty} |F_1(t-u,x,z)-F_1(s-u,x,z)|^2 \textrm{d}z \right]^{p/(p-2)} \textrm{d}u \right)^{\frac{p-2}{2}} \\ & \; \; \; + C_{p,r,T}|t-s|^p \left(\int_0^s \left[ \int_0^{\infty} |F_1(s-u,x,z-2rt)|^2 \textrm{d}z \right]^{p/(p-2)} \textrm{d}u \right)^{\frac{p-2}{2}} \\ &  \leq C_{p,r,T}\left( \int_0^s \left[ \int_0^{\infty} |F_1(t-u,x,z)-F_1(s-u,x,z)|^2 \textrm{d}z \right]^{p/(p-2)} \textrm{d}u \right)^{\frac{p-2}{2}} \\ & \; \; \; + C_{p,r,T}|t-s|^p \left(\int_0^s \left[ \int_{\mathbb{R}} |F_1(s-u,x,z)|^2 \textrm{d}z \right]^{p/(p-2)} \textrm{d}u \right)^{\frac{p-2}{2}}.
\end{split}
\end{equation}
The integral in the second term is integrable if $p>4$. Therefore, the second term is equal to $C_{p,r,T}|t-s|^p$ for $p>4$. For the first term, we have that it is equal to 
\begin{equation}
\begin{split}
& C_{p,r,T} \int_0^s  \left[ \int_0^{\infty} \left| \frac{1}{\sqrt{(t-s)+(s-u)}} e^{-(x-z)^2/4((t-s)+(s-u))} - \frac{1}{\sqrt{s-u}} e^{-(x-z)^2/4(s-u)} \right|^2 \textrm{d}z \right]^{p/(p-2)} \textrm{d}u \\ & = C_{p,r,T} \int_0^s  \left[ \int_0^{\infty} \left| \frac{1}{\sqrt{(t-s)+u}} e^{-(x-z)^2/4((t-s)+u)} - \frac{1}{\sqrt{u}} e^{-(x-z)^2/4u} \right|^2 \textrm{d}z \right]^{p/(p-2)} \textrm{d}u.
\end{split}
\end{equation}
By making the substitution $v= \frac{u}{|t-s|}$, we see that this integral is at most
\begin{equation}
|t-s| \int_0^{\infty} \left[ \int_{\mathbb{R}} \left| \frac{1}{\sqrt{(t-s)(1+v)}} e^{-z^2/4((t-s)(1+v))} - \frac{1}{\sqrt{(t-s)v}} e^{-z^2/4(t-s)v} \right|^2 \textrm{d}z \right]^{p/(p-2)} \textrm{d}v.
\end{equation}
The substitution $\tilde{z}= \frac{z}{\sqrt{t-s}}$ then gives that this is equal to 
\begin{equation}\label{A7}
|t-s|^{\frac{p-4}{2(p-2)}} \int_0^{\infty} \left[ \int_{\mathbb{R}} \left| \frac{1}{\sqrt{1+v}} e^{-\tilde{z}^2/4(1+v)} - \frac{1}{\sqrt{v}}e^{-\tilde{z}^2/4v} \right|^2 \textrm{d}\tilde{z} \right]^{p/(p-2)} \textrm{d}v.
\end{equation}
The integral in (\ref{A7}) converges provided that $p>4$, giving the result.
\end{proof}

\begin{proof}[Proof of 3]
By (\ref{F1}), we have that 
\begin{equation} \begin{split}
& \int_0^s \left[ \int_0^{\infty} | e^{-r(x-z)}F_1(s-u,x,z)-e^{-r(y-z)}F_1(s-u,y,z)|^2 \textrm{d}z \right]^{p/(p-2)} \textrm{d}u \\ & \leq C_{p,r,T} \int_0^s \left[ \int_0^{\infty} |F_1(s-u,x,z-2rt)-F_1(s-u,y,z-2rt)|^2 \textrm{d}z \right]^{p/(p-2)} \textrm{d}u  \\ & \leq C_{p,r,T} \int_0^s \left[  \int_{\mathbb{R}} |F_1(s-u,x,z)-F_1(s-u,y,z)|^2 \textrm{d}z \right]^{p/(p-2)} \textrm{d}u  \\ & = C_{p,r,T} \int_0^s \left[ \int_{\mathbb{R}} \frac{1}{u} \left[ \exp \left(- \frac{((x-y)-(z-y))^2}{4u} \right) - \exp \left( - \frac{(z-y)^2}{4u} \right) \right]^2 \textrm{d}z \right]^{p/(p-2)} \textrm{d}u \\ & \leq C_{p,r,T} \int_0^{\infty} \left[ \int_{\mathbb{R}} \frac{1}{u} \left[ \exp \left(- \frac{((x-y)-h)^2}{4u} \right) - \exp \left( - \frac{h^2}{4u} \right) \right]^2 \textrm{d}h \right]^{p/(p-2)} \textrm{d}u.
\end{split}
\end{equation}
Making the change of variables $w=\frac{h}{|x-y|}$ and $v=\frac{u}{(x-y)^2}$, we see that this is equal to 
\begin{equation}
C_{p.r,T}|x-y|^{\frac{p-4}{p-2}} \int_0^{\infty} \left[ \int_{\mathbb{R}} \frac{1}{ v}\left| e^{-(1+w)^2/4v} - e^{-w^2/4v} \right|^2 \textrm{d}w \right]^{p/(p-2)}\textrm{d}v.
\end{equation}
The integral here converges provided that $p>4$, and only depends on $p$. We therefore have the result.
\end{proof}

\begin{prop}\label{Infintie derivative bound}
Let $r \in \mathbb{R}$. Then for $t \in [0,T]$,  
\begin{equation}
\sup\limits_{ x \geq 0} \left( \int_0^{\infty} e^{-r(x-y)}\left|\frac{\partial G}{\partial y}(t,x,y)\right| \textrm{d}y\right) \leq \frac{C_{r,T}}{\sqrt{t}}.
\end{equation}
\end{prop}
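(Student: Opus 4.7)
The plan is to bound $\partial_y G$ pointwise by the sum of two positive terms and estimate the corresponding integrals separately. Writing $G = F_1 - F_2$ and differentiating, we obtain
\[
\frac{\partial G}{\partial y}(t,x,y) = \frac{x-y}{2t}F_1(t,x,y) + \frac{x+y}{2t}F_2(t,x,y),
\]
so, since $x+y \geq 0$ on the domain,
\[
\Bigl|\frac{\partial G}{\partial y}(t,x,y)\Bigr| \leq \frac{|x-y|}{2t}F_1(t,x,y) + \frac{x+y}{2t}F_2(t,x,y).
\]
It therefore suffices to bound each of the two resulting integrals against $e^{-r(x-y)}$ by $C_{r,T}/\sqrt{t}$ uniformly in $x \geq 0$.

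For the $F_1$ piece I would invoke the identity \eqref{F1} to absorb the exponential weight, writing $e^{-r(x-y)}F_1(t,x,y) = e^{r^2 t}F_1(t,x,y-2rt)$, and then change variables $z = y-2rt$. Bounding $|x-z-2rt| \leq |x-z| + 2|r|t$ and extending the $z$-integral to all of $\mathbb{R}$ reduces matters to two classical Gaussian moments: $\int_{\mathbb{R}}F_1(t,x,z)\,dz = 1$ and $\int_{\mathbb{R}}\frac{|x-z|}{2t}F_1(t,x,z)\,dz = 1/\sqrt{\pi t}$. This yields a uniform bound $e^{r^2 T}\bigl(1/\sqrt{\pi t} + |r|\bigr) \leq C_{r,T}/\sqrt{t}$.

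For the $F_2$ piece, the substitution $w = x+y$ and completion of the square in $w$ yield
\[
\int_0^{\infty}e^{-r(x-y)}\frac{x+y}{2t}F_2(t,x,y)\,dy = e^{-2rx+r^2 t}\int_x^{\infty}\frac{w}{2t\sqrt{4\pi t}}\exp\Bigl(-\frac{(w-2rt)^2}{4t}\Bigr)\,dw.
\]
When $r \geq 0$ the prefactor $e^{-2rx}$ is at most $1$, and one can bound the integral by extending to $\mathbb{R}$ as in the $F_1$ case. The main obstacle is the case $r < 0$: the prefactor $e^{2|r|x}$ grows without bound in $x$, so one cannot discard the constraint $w \geq x$. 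Here I would substitute $u = w - 2rt$, drop the favourable $-2|r|t$ in $u - 2|r|t$, and invoke the exact tail identity $\int_a^{\infty}u\,e^{-u^2/(4t)}\,du = 2t\,e^{-a^2/(4t)}$ with $a = x + 2|r|t > 0$. The algebraic simplification
\[
-\frac{(x+2|r|t)^2}{4t} + 2|r|x + r^2 t = -\frac{(x-2|r|t)^2}{4t} + r^2 t \leq r^2 T
\]
shows that the Gaussian tail exactly compensates the prefactor $e^{2|r|x}$, leaving a uniform bound $e^{r^2 T}/\sqrt{4\pi t}$. Combining this with the $F_1$ estimate gives the claim. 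The only delicate step is the $r<0$ case above: replacing the truncated tail by one over $\mathbb{R}$ (as one would naturally do for $r \geq 0$) loses precisely the decay needed to cancel $e^{2|r|x}$, so the cancellation has to be carried out explicitly.
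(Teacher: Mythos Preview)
Your proof is correct and follows the same approach as the paper: split $G=F_1-F_2$, absorb the weight $e^{-r(x-y)}$ by completing the square, and reduce to Gaussian moment computations. In fact your treatment of the $F_2$ term is more complete than the paper's, which just writes ``arguing as before'' after displaying the completed-square identity; you correctly identify that for $r<0$ the prefactor $e^{-2rx}$ forces one to keep the truncated integral $\int_x^\infty$ and use the exact Gaussian tail to obtain the cancellation, rather than extending to $\mathbb{R}$.
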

\begin{proof}
It is sufficient to prove the result for $F_1$ and $F_2$ separately. Calculating gives that  
\begin{equation}\begin{split}
e^{-r(x-y)}\frac{\partial F_1}{\partial y}(t,x,y)& = \frac{(x-y)}{8 t^{3/2}}\exp\left(-\frac{(x-y+2r t)^2}{4t} \right) \exp \left( r^2 t \right) \\ & \leq \frac{(x-y)}{8 t^{3/2}}\exp\left(-\frac{(x-y+2r t)^2}{4t} \right) \exp \left( r^2 T \right).
\end{split}
\end{equation}
Therefore
\begin{equation}\begin{split}
\int_0^{\infty} e^{-r(x-y)}\left| \frac{\partial F_1}{\partial y}(t,x,y) \right| \textrm{d}y \leq &  C_{r,T} \int_0^{\infty} \frac{(x-y)}{t^{3/2}}\exp\left(-\frac{(x-y+2r t)^2}{4t} \right)  \textrm{d}y \\ \leq & C_{r,T} \int_0^{\infty} \frac{(x-y+2r t)}{t^{3/2}}\exp\left(-\frac{(x-y+2r t)^2}{4t} \right)  \textrm{d}y \\ \leq & C_{r,T} \int_{\mathbb{R}} \frac{y}{t^{3/2}} \exp \left( -\frac{y^2}{4t} \right) \textrm{d}y = \frac{C_{r,T}}{\sqrt{t}}. 
\end{split}
\end{equation}
The proof for the $F_2$ part of the heat kernel is similar. By noting that 
\begin{equation}
e^{-r(x-y)} \frac{\partial F_2}{\partial y} (t,x,y)= \frac{x+y}{8 t^{3/2}} \exp \left( - \frac{(x+y-2r t)^2}{4t} \right) \exp \left(r^2 t -2rx \right) 
\end{equation}
and arguing as before, we obtain the result.
\end{proof}

\begin{prop}\label{HeatDerivativeEstimateInfinite}
Let $\delta \in \mathbb{R}$. Then for every $t,s \in [0,T]$ and every $x,y \in [0,\infty)$, and every $p>4$, we have that
\begin{enumerate}
\item $\left[ \int_0^s \left( \int_0^{\infty} \left|e^{-\delta(x-z)}\frac{\partial G}{\partial z}(r,x,z)- e^{-\delta(y-z)}\frac{\partial G}{\partial z}(r,y,z) \right| \textrm{d}z \right)^{p/(p-2)} \textrm{d}r \right]^{(p-2)/p} \leq C_{p,T,\delta} |x-y|^{(p-4)/2p},$
\item $\left[ \int_0^s \left( \int_0^{\infty} \left|e^{-\delta(x-z)}\frac{\partial G}{\partial z}(t-r,x,z)- e^{-\delta(x-z)}\frac{\partial G}{\partial z}(s-r,x,z) \right| \textrm{d}z \right)^{p/(p-2)} \textrm{d}r \right]^{(p-2)/p} \\ \leq C_{p,T,\delta}|t-s|^{(p-4)/4p},$
\item $\left[ \int_s^t \left( \int_0^{\infty} \left| e^{-\delta(x-z)}\frac{\partial G}{\partial z}(t-r,x,z)\right| \textrm{d}z \right)^{p/(p-2)} \textrm{d}r \right]^{(p-2)/p} \leq C_{p,T,\delta}|t-s|^{(p-4)/4p}.$
\end{enumerate}
\end{prop}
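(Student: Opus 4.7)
The plan is to follow the same template used in Proposition \ref{Exponential heat kernel estimates}: split $G = F_1 - F_2$, handle each piece separately, use the identity $e^{-\delta(x-z)}F_1(t,x,z) = e^{\delta^2 t}F_1(t, x, z-2\delta t)$ (and its analogue for $F_2$) to absorb the exponential weight into a translation, and then reduce to dimensionless integrals by scaling. In all three parts, the exponential prefactor $e^{\delta^2 t}$ is bounded by $C_{T,\delta}$ on $[0,T]$, so the weighted kernel differs from the unweighted one on $\mathbb{R}$ only by a translation plus a harmless constant; this reduces everything to questions about the classical heat kernel derivative.

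For estimate (3), I would first absorb the exponential weight as above, and then apply Proposition \ref{Infintie derivative bound} (or its one-line rederivation on $\mathbb R$) to bound the inner integral by $C_{r,T}/\sqrt{t-r}$. The outer integral becomes $\int_s^t (t-r)^{-p/(2(p-2))}\,\textrm{d}r$, which converges precisely when $p>4$; the change of variable $u = t-r$ shows it equals a constant times $|t-s|^{(p-4)/(2(p-2))}$, and raising to the power $(p-2)/p$ gives the stated power of $|t-s|$.

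For estimate (1), after absorbing the weight I would control
\[
\int_0^{\infty}\Bigl|\tfrac{\partial F_1}{\partial z}(r,x,z-2\delta r) - \tfrac{\partial F_1}{\partial z}(r,y,z-2\delta r)\Bigr|\,\textrm{d}z
\]
by the translation-invariant $L^1_z$ difference of $\partial_z F_1$ on $\mathbb R$, and then carry out the scaling $h = z-y$, $w = h/|x-y|$, $v = r/(x-y)^2$, mirroring the substitution used at the end of the proof of Proposition \ref{Exponential heat kernel estimates} (item 3). The $(x-y)$-powers extracted by the two substitutions combine to give $|x-y|^{(p-4)/(p-2)}$ inside the bracket; the remaining dimensionless integral $\int_0^{\infty}\bigl[\int_{\mathbb R}(1/v^{3/2})|(1+w)e^{-(1+w)^2/4v} - w\, e^{-w^2/4v}|\,\textrm{d}w\bigr]^{p/(p-2)}\,\textrm{d}v$ is finite for $p>4$, and raising to the power $(p-2)/p$ yields $|x-y|^{(p-4)/(2p)}$, of which the stated bound $|x-y|^{(p-4)/(2p)}$ (the paper's $(p-4)/(4p)$ requires double-checking the scaling exponents, but is obtained by the same mechanism).

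For estimate (2), the same strategy applies with the roles of space and time swapped: absorb the weight, pass to the heat kernel on $\mathbb R$ (now with a $t$-dependent translation, which produces an additional $|t-s|$-small term handled exactly as in the proof of Proposition \ref{Exponential heat kernel estimates}, item 2), and then scale by $v = u/|t-s|$ and $w = z/\sqrt{|t-s|}$ to extract the desired power of $|t-s|$; the residual integral converges for $p>4$. The main obstacle I anticipate is the book-keeping for estimate (2), since the time-dependent translation $2\delta t$ versus $2\delta s$ produces an extra term of the form $|t-s|\cdot(\text{kernel})$, which must be estimated separately and shown to give a higher-order contribution in $|t-s|$; this is the step where one has to be most careful to avoid losing regularity, but it parallels the two-term split used successfully in Proposition \ref{Exponential heat kernel estimates}. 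The spatial and temporal arguments are otherwise routine scaling.
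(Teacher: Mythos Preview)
Your overall plan is sound and close to the paper's, but there is one small omission and one genuine methodological difference worth flagging.

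\textbf{The missing lower-order term.} The identity $e^{-\delta(x-z)}F_1(r,x,z)=e^{\delta^2 r}F_1(r,x,z-2\delta r)$ is correct for the kernel, but the object you must control is $e^{-\delta(x-z)}\partial_z F_1$, not $\partial_z\bigl(e^{-\delta(x-z)}F_1\bigr)$. Differentiating the identity gives
\[
e^{-\delta(x-z)}\frac{\partial F_1}{\partial z}(r,x,z)=e^{\delta^2 r}\frac{\partial F_1}{\partial z}(r,x,z-2\delta r)\;-\;\delta\,e^{\delta^2 r}F_1(r,x,z-2\delta r),
\]
so ``absorbing the weight into a translation'' leaves a second term proportional to $\delta$ times the (translated) Gaussian itself. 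This is exactly the two-term decomposition the paper writes out explicitly at the start of its proof. The extra term is harmless---it has strictly better integrability than the derivative term and is handled by the same estimates as in Proposition~\ref{Exponential heat kernel estimates}---but your write-up suppresses it entirely, and your displayed integral for part~(1) contains only the main piece.

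\textbf{A different route for part (1).} For the main term in part~(1) you propose a single pair of substitutions $w=h/|x-y|$, $v=r/(x-y)^2$ and then assert that the resulting dimensionless integral
\[
\int_0^{\infty}\Bigl[\int_{\mathbb R}\frac{1}{v^{3/2}}\bigl|(1+w)e^{-(1+w)^2/4v}-we^{-w^2/4v}\bigr|\,\mathrm{d}w\Bigr]^{p/(p-2)}\mathrm{d}v
\]
is finite for $p>4$. This is correct: the inner integral is $O(v^{-1/2})$ as $v\to 0$ (by the crude triangle inequality and $\int|w|e^{-w^2/4v}\,\mathrm{d}w=4v$) and $O(v^{-1})$ as $v\to\infty$ (by the mean value theorem and $\int|g_v'|\,\mathrm{d}w=C\sqrt{v}$), so the outer integral converges precisely when $q=p/(p-2)<2$. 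The paper does \emph{not} do this: it instead splits the $r$-integral at $r=|x-y|$, bounds the inner integral crudely by $Cr^{-1/2}$ on $[0,|x-y|]$, and uses the pointwise derivative bound $|\partial_u(ue^{-u^2/r})|\le Ce^{-u^2/2r}$ to exploit cancellation on $(|x-y|,\infty)$. Your pure-scaling argument is shorter and in fact yields the sharper exponent $|x-y|^{(p-4)/p}$ rather than the paper's $|x-y|^{(p-4)/2p}$; either suffices for the application. Your parenthetical hesitation about the exponent is therefore unwarranted---your scaling gives a better one than stated.

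Parts~(2) and~(3) are as you describe; the paper omits these entirely with ``similar manipulations'', and your anticipated extra $|t-s|$-term from the time-dependent shift $2\delta t$ versus $2\delta s$ is exactly the right issue to watch.
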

\begin{proof}[Proof of 1]
First note that
\begin{equation}\label{decomp}\begin{split}
e^{-\delta(x-z)}\frac{\partial F_1}{\partial z}(r,x,z)=  & \frac{(x-z+2\delta r)}{8 r^{3/2}}\exp\left(-\frac{(x-z+2\delta r)^2}{4r} \right) \exp \left( \delta^2 r \right) \\ & -  \frac{\delta}{4\sqrt{r}}\exp\left(-\frac{(x-z+2\delta r)^2}{4r} \right) \exp \left( \delta^2 r \right).
\end{split}
\end{equation}
We bound the terms corresponding to these two components separately. Let $q=p/(p-2)$. Then, by applying H\"{o}lder's inequality, we have that 
\begin{equation}
\begin{split}
\int_0^s &  \left( \int_0^{\infty} \left|\frac{(x-z+2\delta r)}{8 r^{3/2}}\exp\left(-\frac{(x-z+2\delta r)^2}{4r} \right) -\frac{(y-z+2\delta r)}{8 r^{3/2}}\exp\left(-\frac{(y-z+2\delta r)^2}{4r} \right) \right| \textrm{d}z \right)^{q} \textrm{d}r \\ \leq &  \int_0^s  \left( \int_{\mathbb{R}} \left|\frac{(x-y+h)}{8 r^{3/2}}\exp\left(-\frac{(x-y+h)^2}{4r} \right) -\frac{h}{8 r^{3/2}}\exp\left(-\frac{h^2}{4r} \right) \right| \textrm{d}z \right)^{q} \textrm{d}r  \\ \leq &  \int_0^{\infty}  \left( \int_{\mathbb{R}} \left|\frac{(x-y+h)}{8 r^{3/2}}\exp\left(-\frac{(x-y+h)^2}{4r} \right) -\frac{h}{8 r^{3/2}}\exp\left(-\frac{h^2}{4r} \right) \right| \textrm{d}z \right)^{q} \textrm{d}r.  
\end{split}
\end{equation}
We split the time integral into two parts- the integral on $[0, |x-y|]$ and the integral on $(|x-y|, \infty)$. For the first of these domains, we have
\begin{equation}
\int_0^{|x-y|}  \left( \int_{\mathbb{R}} \left|\frac{(x-y+h)}{8 r^{3/2}}\exp\left(-\frac{(x-y+h)^2}{4r} \right) -\frac{h}{8 r^{3/2}}\exp\left(-\frac{h^2}{4r} \right) \right| \textrm{d}z \right)^{q} \textrm{d}r.  
\end{equation}
Letting $h=|x-y|u$, we obtain 
\begin{equation}
\begin{split}
\int_0^{|x-y|} &   \left( \int_{\mathbb{R}} |x-y|^2 \left|\frac{(u+1)}{8 r^{3/2}}\exp\left(-\frac{(u+1)^2(x-y)^2}{4r} \right) -\frac{u}{8 r^{3/2}}\exp\left(-\frac{u^2(x-y)^2}{4r} \right) \right| \textrm{d}z \right)^{q} \textrm{d}r \\ & \leq C_p |x-y|^{p/(2p-4)} \int_0^{|x-y|}  \left( \frac{1}{\sqrt{r}(x-y)^2} \right)^{q} \textrm{d}r \\ & = C_p \int_0^{|x-y|} r^{-p/(2p-4)} \textrm{d}r = C_p |x-y|^{(p-4)/(2p-4)}.
\end{split}
\end{equation}
To bound on $(|x-y|, \infty)$, we note that 
\begin{equation}
\frac{\textrm{d}}{\textrm{d}x} ( xe^{-x^2/r}) = e^{-x^2/r} -\frac{2x^2}{r}e^{-x^2/r} \leq Ce^{-x^2/2r}.
\end{equation}
Therefore, outside the region $u \in [-1,0]$, we have that 
\begin{equation}
\begin{split}
\left|\frac{(u+1)}{8 r^{3/2}}\exp\left(-\frac{(u+1)^2(x-y)^2}{4r} \right) -\frac{u}{8 r^{3/2}}\exp\left(-\frac{u^2(x-y)^2}{4r} \right) \right| \leq \frac{C}{r^{3/2}} \max \{ e^{-u^2(x-y)^2/4r}, e^{-(u+1)^2(x-y)^2/4r} \}
\end{split}
\end{equation}
In the region $u \in [-1,0]$, we have that 
\begin{equation}
\left|\frac{(u+1)}{8 r^{3/2}}\exp\left(-\frac{(u+1)^2(x-y)^2}{4r} \right) -\frac{u}{8 r^{3/2}}\exp\left(-\frac{u^2(x-y)^2}{4r} \right) \right| \leq C \frac{1}{r^{3/2}}.
\end{equation}
It follows that 
\begin{equation}
\begin{split}
\int_{|x-y|}^{\infty} &   \left( \int_{\mathbb{R}} |x-y|^2 \left|\frac{(u+1)}{8 r^{3/2}}\exp\left(-\frac{(u+1)^2(x-y)^2}{4r} \right) -\frac{u}{8 r^{3/2}}\exp\left(-\frac{u^2(x-y)^2}{4r} \right) \right| \textrm{d}z \right)^{q} \textrm{d}r \\ & \leq C_p \int_{|x-y|}^{\infty} \left[ \frac{|x-y|^2}{r^{3/2}} \left( 1 + \int_{\mathbb{R}} e^{-u^2(x-y)^2/4r} \textrm{d}u \right) \right]^{q} \textrm{d} r \\ & \leq C_p |x-y|^{2q} \int_{|x-y|}^{\infty} r^{-3q/2} + (r|x-y|)^{-q} \textrm{d}r \\ & = C_p |x-y|^{2q}( |x-y|^{-(3q-2)/2} + |x-y|^{(1-2q)} ) 
\end{split}
\end{equation}
Since $|x-y| \leq 1$, this is at most $C_p|x-y| \leq C_p|x-y|^{(p-4)/(2p-4)}.$ We have therefore deduced inequality (1) for the first component on the right hand side of expression (\ref{decomp}). For the second component of (\ref{decomp}), we have that that
\begin{equation}
\begin{split}
\int_0^{\infty} & \left|\frac{1}{\sqrt{r}}\exp\left(-\frac{(x-z+2\delta r)^2}{4r} \right) \exp \left( \delta^2 r \right)-\frac{1}{\sqrt{r}}\exp\left(-\frac{(y-z+2\delta r)^2}{4r} \right) \exp \left( \delta^2 r \right) \right| \textrm{d}z \\ & \leq \int_{\mathbb{R}} \left|\frac{1}{\sqrt{r}}\exp\left(-\frac{(x-y+h)^2}{4r} \right) \exp \left( \delta^2 r \right)-\frac{1}{\sqrt{r}}\exp\left(-\frac{h^2}{4r} \right) \exp \left( \delta^2 r \right) \right| \textrm{d}h.
\end{split}
\end{equation}
\begin{equation}
\left|\frac{\textrm{d}}{\textrm{d}x}(e^{-x^2/4r}) \right| \leq \frac{C}{\sqrt{r}} e^{-x^2/8r}.
\end{equation}
We therefore have that, for $h \notin [-|x-y|,0]$,
\begin{equation}
\left|\exp\left(-\frac{(x-y+h)^2}{4r} \right)-\exp\left(-\frac{h^2}{4r} \right)  \right| \leq \frac{C}{\sqrt{r}}|x-y|\max \{e^{-h^2/8r},e^{-(x-y+h)^2/8r} \}.
\end{equation}
For $h \in [-|x-y,0]$, we use the simple bound
\begin{equation}
\left|\exp\left(-\frac{(x-y+h)^2}{4r} \right)-\exp\left(-\frac{h^2}{4r} \right)  \right|  \leq 2.
\end{equation}
Putting this together, we obtain that 
\begin{equation}
\begin{split}
 \int_{\mathbb{R}} & \left|\frac{1}{\sqrt{r}}\exp\left(-\frac{(x-y+h)^2}{4r} \right) \exp \left( \delta^2 r \right)-\frac{1}{\sqrt{r}}\exp\left(-\frac{h^2}{4r} \right) \exp \left( \delta^2 r \right) \right| \textrm{d}h \\ & \leq 2\frac{|x-y|}{\sqrt{r}} + C|x-y| \int_{\mathbb{R}} \frac{1}{r}e^{-h^2/4r} \textrm{d}r = C\frac{|x-y|}{\sqrt{r}}.
 \end{split}
\end{equation}
It follows that 
\begin{equation}
\begin{split}
\int_0^s & \left[ \int_0^{\infty}  \left|\frac{1}{\sqrt{r}}\exp\left(-\frac{(x-z+2\delta r)^2}{4r} \right) \exp \left( \delta^2 r \right)-\frac{1}{\sqrt{r}}\exp\left(-\frac{(y-z+2\delta r)^2}{4r} \right) \exp \left( \delta^2 r \right) \right| \textrm{d}z \right]^{p/(p-2)} \textrm{d}r \\ & \leq C_{T,\delta,p} |x-y|^{p/(p-2)}.
\end{split}
\end{equation}
Inequality (1) for the second component of (\ref{decomp}) is then a simple consequence of this.
The manipulations required to prove inequalities (2) and (3) are similar, and we therefore omit these lengthy calculations.
\end{proof} 

\subsection{Heat Kernel on $[0,1]$}

Recall that 
\begin{equation}
H(t,x,y):=  \frac{1}{\sqrt{4 \pi t}}\sum\limits_{n =- \infty}^{\infty} \left[ \exp\left( - \frac{(x-y+2n)^2}{4t} \right) - \exp\left(- \frac{(x+y+2n)^2}{4t} \right) \right].
\end{equation}
We make the observation here that this expression can be written as 
\begin{equation}
H(t,x,y)= G(t,x,y) - \frac{1}{\sqrt{4 \pi t}} \exp \left( - \frac{(x+y-2)^2}{4t} \right) +L(t,x,y),
\end{equation}
where $L$ is a smooth function of $t,x,y$ which vanishes at $t=0$. Consequently, we are able to prove the estimates for $H$ in this section in analogous ways to how we proved the corresponding results for $G$. We therefore omit the proofs here.
\begin{prop}\label{HeatKernelEstimatesCompact}
Fix $T>0$. Then, for $p>4$, we have that
\begin{enumerate}
\item For every $t,s \in [0,T]$, 
$$\sup\limits_{x \in [0,1]}\left(\int_s^t \left[\int_0^{1} H(t-r,x,z)^{2} \textrm{dr} \right]^{p/(p-2)} \textrm{dz} \right)^{\frac{p-2}{2}} \leq C_{p} |t-s|^{(p-4)/4}.$$
\item For every $t,s \in [0,T]$, 
$$\sup\limits_{x \in [0,1]}\left(\int_0^s  \left[ \int_0^{1} \left(H(t-r,x,z)-H(s-r,x,z) \right)^{2} \textrm{ds} \right]^{p/(p-2)} \textrm{dz} \right)^{\frac{p-2}{2}} \leq C_{p} |t-s|^{(p-4)/4}.$$
\item For every $x,y \in [0,1]$,
$$\left(\int_0^s \left[ \int_0^{1} \left(H(s-r,x,z)-H(s-r,y,z) \right)^2 \textrm{d}z\right]^{p/(p-2)} \textrm{d}r \right)^{\frac{p-2}{2}} \leq C_{p} |x-y|^{(p-4)/2}.$$
\end{enumerate}
\end{prop}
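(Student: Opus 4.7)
The plan is to reduce each of the three bounds to the analogous bounds for the half-line kernel $G$ already established in Proposition~\ref{Exponential heat kernel estimates} (taken with $r=0$), exploiting the decomposition the authors record just before the statement, namely
\begin{equation*}
H(t,x,y) = G(t,x,y) - \frac{1}{\sqrt{4\pi t}}\exp\!\left(-\frac{(x+y-2)^2}{4t}\right) + L(t,x,y),
\end{equation*}
where $L$ is smooth on $[0,T]\times[0,1]^2$ and vanishes at $t=0$. Since $(a-b+c)^2 \le 3(a^2+b^2+c^2)$, it suffices to verify the three estimates separately for each of the three summands in this decomposition, and then add the bounds.

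For the $G$-summand, the three inequalities are exactly the special case $r=0$ of parts (1), (2), (3) of Proposition~\ref{Exponential heat kernel estimates}, where we restrict the outer $x$-supremum to $x\in[0,1]$ and the inner $z$-integral to $z\in[0,1]$; shrinking either domain can only decrease the integrand, so the bounds transfer immediately. For the second summand, write
\begin{equation*}
\frac{1}{\sqrt{4\pi t}}\exp\!\left(-\frac{(x+y-2)^2}{4t}\right) = F_2(t,1-x,1-y),
\end{equation*}
with $F_2$ as in Appendix~A.1. The substitutions $\tilde{x}=1-x$, $\tilde{z}=1-z$ reduce the required estimates to the $F_2$-versions of parts (1)--(3) of Proposition~\ref{Exponential heat kernel estimates}, which are handled by the same manipulations the authors used there (they explicitly note that the proofs go through for $F_2$ by the same arguments as for $F_1$); the only difference is that the $z$-domain $[0,1]$ corresponds to $\tilde{z}\in[0,1]$, which is contained in $[0,\infty)$, so the bounds still apply.

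Finally, for the $L$-summand, smoothness of $L$ on the compact set $[0,T]\times[0,1]^2$ together with $L(0,\cdot,\cdot)\equiv 0$ gives
\begin{equation*}
|L(t,x,z)| \le C\,t, \qquad |\partial_t L| + |\partial_x L| \le C,
\end{equation*}
uniformly on this set. Consequently, for estimate (1) the integrand is bounded by $C^2 t^2 \le C_T$, producing a bound of $C_T|t-s|^{(p-2)/2}$, which is controlled by $C_{p,T}|t-s|^{(p-4)/4}$ on $[0,T]$. For estimates (2) and (3), the mean value theorem applied to $L$ in time or space respectively yields pointwise bounds of the form $C|t-s|$ and $C|x-y|$ on the inner integrand, which after raising to the $p/(p-2)$ power and integrating over a bounded time interval give bounds of at most $C_{p,T}|t-s|^{p/(p-2)}$ and $C_{p,T}|x-y|^{p/(p-2)}$; both are subsumed by the target rates $|t-s|^{(p-4)/4}$ and $|x-y|^{(p-4)/2}$ on $[0,T]\times[0,1]$ for $p>4$. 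Summing the three contributions completes the proof.

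The only genuine obstacle is verifying the decomposition of $H$ and the smoothness/vanishing of $L$; once that is in hand the argument is essentially a transcription of Proposition~\ref{Exponential heat kernel estimates} with $r=0$, which is why the authors elect to omit the details.
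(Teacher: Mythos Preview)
Your proposal is correct and follows essentially the same approach as the paper: you use the decomposition $H=G-\frac{1}{\sqrt{4\pi t}}\exp(-(x+y-2)^2/4t)+L$ that the authors record, reduce the first two summands to the half-line estimates of Proposition~\ref{Exponential heat kernel estimates} (with $r=0$ and via the reflection $\tilde x=1-x$, $\tilde z=1-z$), and handle the smooth remainder $L$ by elementary bounds. This is precisely what the paper intends when it says the proofs are ``analogous'' to those for $G$ and omits the details.
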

\begin{prop}\label{Compact Heat Kernel Derivative}
For $t \in [0,T]$,  
\begin{equation}
\sup\limits_{ x \geq 0} \left( \int_0^{\infty} \left|\frac{\partial H}{\partial y}(t,x,y)\right| \textrm{d}y\right) \leq \frac{C_{\delta,T}}{\sqrt{t}}.
\end{equation}
\end{prop}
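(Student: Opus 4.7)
The plan is to exploit the decomposition noted in the paper, namely
\begin{equation*}
H(t,x,y) = G(t,x,y) - \frac{1}{\sqrt{4\pi t}} \exp\!\left(-\frac{(x+y-2)^2}{4t}\right) + L(t,x,y),
\end{equation*}
where $G$ is the Dirichlet heat kernel on $[0,\infty)$ and $L$ collects all remaining terms from the method-of-images expansion. On $[0,T]\times[0,1]^2$ the function $L$ is smooth and vanishes at $t=0$ (in fact, an inspection of the series shows each such term is bounded by a multiple of $t^{-1/2}e^{-c/t}$ for some $c>0$). Splitting $\int_0^1 |\partial_y H(t,x,y)|\,\textrm{d}y$ along this decomposition reduces the problem to three independent estimates.

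For the $G$ contribution, Proposition \ref{Infintie derivative bound} applied with $r=0$ immediately gives
\begin{equation*}
\sup_{x\geq 0}\int_0^{\infty}\left|\frac{\partial G}{\partial y}(t,x,y)\right|\textrm{d}y \leq \frac{C_T}{\sqrt{t}},
\end{equation*}
and restricting the $y$-integral to $[0,1]$ only improves this bound. For the Gaussian reflection term, I would differentiate in $y$ to obtain $-(x+y-2)(4\sqrt{\pi}\,t^{3/2})^{-1}\exp(-(x+y-2)^2/4t)$, enlarge the $y$-domain of integration from $[0,1]$ to $\mathbb{R}$, and apply the substitution $u=(x+y-2)/\sqrt{2t}$; the resulting integral is a constant multiple of the first absolute moment of a standard Gaussian divided by $\sqrt{t}$, giving a $C/\sqrt{t}$ bound uniformly in $x$.

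For the remainder $L$, smoothness on the compact set $[0,T]\times[0,1]^2$ together with $L(0,\cdot,\cdot)\equiv 0$ gives, via Taylor's theorem in $t$, a uniform bound $|\partial_y L(t,x,y)|\leq C_T\, t$. Integrating in $y$ over $[0,1]$ then produces the bound $C_T\,t$, which is trivially dominated by $C_T\sqrt{T}/\sqrt{t}$ for $t\in(0,T]$ and therefore fits within the stated estimate. Summing the three contributions yields the claim.

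The argument is essentially routine bookkeeping once the decomposition is in hand; I do not anticipate a significant obstacle. The only calculation that is not immediate is the direct Gaussian computation for the reflection term, which is handled by the substitution described above, and the verification that $L$ and its first $y$-derivative are genuinely controlled uniformly on $[0,T]\times[0,1]^2$ (which follows from term-by-term differentiation of the rapidly convergent image series).
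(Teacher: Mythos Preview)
Your proposal is correct and follows precisely the route the paper itself indicates: the paper omits the proof entirely, remarking only that the decomposition $H=G-\frac{1}{\sqrt{4\pi t}}\exp(-(x+y-2)^2/4t)+L$ with $L$ smooth and vanishing at $t=0$ allows one to argue analogously to the $G$-estimates. You have supplied exactly those omitted details---invoking Proposition~\ref{Infintie derivative bound} with $r=0$ for the $G$ piece, the direct Gaussian substitution for the reflection term, and the smoothness of $L$ for the remainder---so the approaches coincide. One minor remark: for the $L$ term you do not even need the Taylor estimate $|\partial_y L|\leq C_T t$; mere boundedness of $\partial_y L$ on $[0,T]\times[0,1]^2$ already gives $\int_0^1|\partial_y L|\,\textrm{d}y\leq C_T\leq C_T\sqrt{T}/\sqrt{t}$.
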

\begin{prop}\label{HeatDerivativeBoundsCompact}
For $t,s \in [0,T]$, $x,y \in [0,1]$ and $p>4$, we have that 
\begin{enumerate}
\item $\left[ \int_0^s \left( \int_0^1 \left|\frac{\partial H}{\partial z}(t-r,x,z)- \frac{\partial H}{\partial z}(s-r,x,z) \right| \textrm{d}z \right)^{p/(p-2)} \textrm{d}r \right]^{(p-2)/p} \leq C |t-s|^{(p-4)/4p},$ 
\item $\left[ \int_0^s \left( \int_0^1 \left|\frac{\partial H}{\partial z}(s-r,x,z)- \frac{\partial H}{\partial z}(s-r,y,z) \right| \textrm{d}z \right)^{p/(p-2)} \textrm{d}r \right]^{(p-2)/p} \leq C |x-y|^{(p-4)/2p},$ 
\item $\left[ \int_s^t \left( \int_0^1 \left| \frac{\partial H}{\partial z}(t-r,x,z)\right| \textrm{d}z \right)^{p/(p-2)} \textrm{d}r \right]^{(p-2)/p} \leq C|t-s|^{(p-4)/4p}.$
\end{enumerate}
\end{prop}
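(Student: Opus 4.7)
The plan is to reduce the required estimates for $\partial_z H$ to those already established for $\partial_z G$ in Proposition \ref{HeatDerivativeEstimateInfinite}, using the decomposition noted in the excerpt,
$$H(t,x,z) = G(t,x,z) - \tilde G(t,x,z) + L(t,x,z),$$
where $\tilde G(t,x,z) := \frac{1}{\sqrt{4\pi t}} \exp\left(-\frac{(x+z-2)^2}{4t}\right)$ and $L \in C^{\infty}([0,T] \times [0,1]^2)$ vanishes at $t=0$. Differentiating in $z$ and applying the triangle inequality, it suffices to prove each of the bounds (1), (2), (3) separately for the three components $\partial_z G$, $\partial_z \tilde G$ and $\partial_z L$.

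For the $G$-component, inequalities (2), (1) and (3) of Proposition \ref{HeatDerivativeEstimateInfinite}, specialised to $\delta = 0$ and with the inner $z$-integral restricted from $[0,\infty)$ to $[0,1]$ (which only reduces the integrals), yield exactly the bounds (1), (2) and (3) sought here. For the reflection term, observe that in the notation of Section A.1 one has $\tilde G(t,x,z) = F_1(t,x,2-z)$, so $\partial_z \tilde G(t,x,z) = -(\partial_w F_1)(t,x,w)\big|_{w=2-z}$. The substitution $w = 2-z$ in each inner integral transforms every required bound into one for $\partial_w F_1$ over the shifted domain $w \in [1,2] \subset \mathbb{R}$, and the three arguments performed in the proof of Proposition \ref{HeatDerivativeEstimateInfinite} (splitting the time integral at $|x-y|$, controlling $|(u+1)e^{-(u+1)^2/4r} - u e^{-u^2/4r}|$ by the mean value theorem, etc.) then go through verbatim with $\delta = 0$ to give the corresponding estimates for $\partial_z \tilde G$.

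For the smooth remainder $L$, continuity of all derivatives on the compact set $[0,T]\times[0,1]^2$ gives uniform bounds $|\partial_z L|, |\partial_t \partial_z L|, |\partial_x \partial_z L| \leq C$, so
$$\int_0^1 \left| \partial_z L(t-r,x,z) - \partial_z L(s-r,y,z) \right| \mathrm{d}z \leq C(|t-s| + |x-y|),$$
and integrating in $r$ yields Lipschitz-type bounds strictly stronger than the H\"older bounds (1)-(3). The main obstacle in this proof is not conceptual but rather bookkeeping: one must carefully track the exponents $(p-4)/4p$ and $(p-4)/2p$ produced by the substitutions in the three calculations of Proposition \ref{HeatDerivativeEstimateInfinite} and verify that each matches the corresponding bound here. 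The only subtlety specific to the compact case is the reflection term $\tilde G$, whose handling is rendered routine by the observation above that differentiation in $z$ and the shift $w = 2-z$ return us exactly to the setting of the half-line calculations.
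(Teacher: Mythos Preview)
Your proposal is correct and follows exactly the approach indicated in the paper: the authors explicitly note the decomposition $H = G - \tilde G + L$ with $L$ smooth and vanishing at $t=0$, then state that the estimates for $H$ follow ``in analogous ways to how we proved the corresponding results for $G$'' and omit the details. Your treatment of each component --- citing Proposition \ref{HeatDerivativeEstimateInfinite} for $G$, reducing $\tilde G$ to $F_1$ via the shift $w=2-z$, and using smoothness for $L$ --- is precisely the intended argument.
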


\section{Proof of Theorem \ref{Obstacle INfinite}}

In this section, we will prove a series of Propositions which together constitute a proof of Theorem \ref{Obstacle INfinite}.

\begin{prop}\label{penalisationobstacleincreasing}
Let $r>0$ and $v \in C([0,T]; \mathscr{C}_r)$ such that $v(t,0)=0$. For $\epsilon >0$, let $z^{\epsilon}$ be the solution to the PDE
\begin{equation}\label{PDE}
\frac{\partial z^{\epsilon}}{\partial t} = \Delta z^{\epsilon} + \frac{1}{\epsilon} \arctan(((z^{\epsilon}+v) \wedge 0)^2).
\end{equation}
Then $z^{\epsilon}$ increases as $\epsilon \downarrow 0$.
\end{prop}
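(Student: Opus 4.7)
The plan is a classical comparison argument for the penalised PDE. Fix $0 < \epsilon_1 < \epsilon_2$ and set $w := z^{\epsilon_1} - z^{\epsilon_2}$; I aim to show $w \geq 0$ pointwise. Writing $g_{\epsilon}(y) := \frac{1}{\epsilon} \arctan((y \wedge 0)^2)$, two monotonicity properties are immediate from the definition: (i) for each fixed $\epsilon > 0$, the map $y \mapsto g_{\epsilon}(y)$ is non-increasing (since $(y \wedge 0)^2$ is non-increasing in $y$); and (ii) for each fixed $y$, the map $\epsilon \mapsto g_{\epsilon}(y)$ is non-increasing. Each $g_{\epsilon}$ is moreover non-negative, uniformly bounded by $\tfrac{\pi}{2\epsilon}$, and $C^1$.

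Subtracting the two instances of \eqref{PDE}, $w$ satisfies
\begin{equation*}
\frac{\partial w}{\partial t} = \Delta w + \bigl[g_{\epsilon_1}(z^{\epsilon_1}+v) - g_{\epsilon_1}(z^{\epsilon_2}+v)\bigr] + \bigl[g_{\epsilon_1}(z^{\epsilon_2}+v) - g_{\epsilon_2}(z^{\epsilon_2}+v)\bigr],
\end{equation*}
together with zero initial data and Dirichlet boundary condition $w(t,0)=0$. The first bracket rewrites as $a(t,x) w(t,x)$ via the mean value theorem, with $a \leq 0$ by (i); the second bracket is a non-negative source term $b(t,x) \geq 0$ by (ii). Hence $w$ solves a linear parabolic equation $\partial_t w = \Delta w + a w + b$ on $[0,\infty)$ with $a \leq 0$, $b \geq 0$, and zero data.

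I would then conclude that $w \geq 0$ via a Feynman--Kac representation along a Brownian motion $B^{x}$ reflected at $0$, giving
\begin{equation*}
w(t,x) = \mathbb{E}\left[\int_0^t \exp\left(\int_0^s a(t-r,B^{x}_r) \, dr\right) b(t-s, B^{x}_s) \, ds\right] \geq 0,
\end{equation*}
precisely as in the argument behind Proposition \ref{PDE BOund1}. The main obstacle is technical rather than conceptual: verifying that this representation genuinely identifies the solution to the subtracted PDE in the $\mathscr{C}_r$-valued setting. This is handled by noting that the boundedness of $g_{\epsilon}$ together with $v \in C([0,T];\mathscr{C}_r)$ ensures a classical $C^{1,2}$ solution of \eqref{PDE} with at-most-exponential spatial growth, whence It\^{o}'s formula applied on finite time intervals, combined with a limiting argument exploiting the weighted norm, yields the stochastic representation and thus the desired inequality $z^{\epsilon_1} \geq z^{\epsilon_2}$.
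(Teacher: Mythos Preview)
Your argument is correct in substance and takes a genuinely different route from the paper. The paper works with the exponentially weighted difference $y(t,x):=e^{-\delta x}(z^{\epsilon_1}-z^{\epsilon_2})$, derives the PDE it satisfies, and then tests against the positive part $y^+$. The sign observation is the same as yours---when $y\geq 0$ the penalisation terms contribute non-positively---but the conclusion is reached via an $L^2$ energy inequality $\|y_t^+\|_{L^2}^2 \leq \delta^2\int_0^t\|y_s^+\|_{L^2}^2\,ds$ and Gronwall. Your linearisation $\partial_t w=\Delta w+aw+b$ with $a\leq 0$, $b\geq 0$ and the probabilistic representation is a cleaner route to the same comparison principle, and it has the advantage of reusing the machinery already set up for Proposition~\ref{PDE BOund1}. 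The energy approach, on the other hand, needs no regularity of the coefficient $a$ (your mean-value $\xi$ need not depend measurably on $(t,x)$, though boundedness of $g_{\epsilon_1}'$ is enough for the Feynman--Kac step to go through anyway).

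One small correction: for the Dirichlet condition $w(t,0)=0$ the appropriate representation uses Brownian motion \emph{killed} at $0$, not reflected; the integral then runs over $[0,t\wedge\tau_0]$ where $\tau_0$ is the hitting time of the origin. The non-negativity conclusion is of course unaffected.
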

\begin{proof}
Let $0< \epsilon_2 \leq \epsilon_1$. Fix some $\delta >0$ and set $y(t,x):= e^{-\delta x}(z^{\epsilon_1}(t,x)-z^{\epsilon_2}(t,x))$. We have that $y$ satisfies 
\begin{equation}
\frac{\partial y}{\partial t} = \Delta y + 2\delta \frac{\partial y}{\partial x} + \delta^2 y + \frac{e^{-\delta x}}{\epsilon_1} \arctan(((z^{\epsilon_1}+v) \wedge 0)^2)- \frac{e^{- \delta x}}{\epsilon_2} \arctan(((z^{\epsilon_2}+v) \wedge 0)^2).
\end{equation}
We know that $$\|y\|_{T, \mathscr{C}_{-\delta}} = \|z^{\epsilon_1} - z^{\epsilon_2} \|_{T,\infty} < \infty.$$ 
Testing our equation for $y$ with the positive part of $y$, we obtain that 
\begin{equation}\begin{split}
\|y_T^+ \|_{L^2}^2 = & - \int_0^T \left\|\frac{\partial y_t^+}{\partial x} \right\|_{L^2}^2 \textrm{d}t+ \delta \int_0^T \int_0^{\infty} \frac{\partial ((y^+)2)}{\partial x}(s,x)  \textrm{d}x\textrm{d}s + \delta^2 \int_0^T \|y^+_t\|_{L^2}^2 \textrm{d}t + \textrm{Negative Part}.
\end{split} 
\end{equation}
We note that testing against the last two terms gives a negative contribution, since when $y \geq 0$, we have that $z_{\epsilon_1} \geq z_{\epsilon_2}$ and so $ ((z^{\epsilon_1}+v) \wedge 0)^2 \leq  ((z^{\epsilon_2}+v) \wedge 0)^2$, from which it follows that $$\frac{e^{-\delta x}}{\epsilon_1} \arctan( ((z^{\epsilon_1}+v) \wedge 0)^2)- \frac{e^{- \delta x}}{\epsilon_2} \arctan(((z^{\epsilon_2}+v) \wedge 0)^2 \leq 0.$$
Putting this together, we see that 
\begin{equation}
\|y_T^+\|_{L^2}^2 \leq \delta^2 \int_0^T \|y_t^+ \|_{L^2}^2 \textrm{d}t.
\end{equation}
Gronwall's inequality then gives that $y_T^+=0$ i.e. that $z_{\epsilon_1} \leq z_{\epsilon_2}$.
\end{proof}

The following bound will allow us to control solutions of our obstacle problems by the obstacles themselves. 

\begin{prop}\label{obstaclepenalisationcontrol}
Let $r \in \mathbb{R}$ and $v_1$,$v_2 \in C([0,T]; \mathscr{C}_r)$ such that $v_1(t,0)=v_2(t,0)=0$. Fix $\epsilon>0$. For $i=1,2$, let $z_i^{\epsilon}$ be the solution to the PDE
\begin{equation}\label{PDE12}
\frac{\partial z_i^{\epsilon}}{\partial t} = \Delta z_{i}^{\epsilon} + \frac{1}{\epsilon} \arctan(((z_{i}^{\epsilon}+v_i) \wedge 0)^2)
\end{equation}
with boundary condition $z_i(t,0)=0$ and zero initial data. Then there exists a constant $C_{r,T}$ such that 
\begin{equation}
\|z_1^{\epsilon}-z_2^{\epsilon}\|_{\mathscr{C}_r^T} \leq C_{r,T} \|v_1-v_2\|_{\mathscr{C}_r^T}.
\end{equation}
\end{prop}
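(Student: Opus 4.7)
The plan is to exploit the monotonicity of the penalisation $g_\epsilon(u):=\tfrac{1}{\epsilon}\arctan((u\wedge 0)^2)$ in order to rewrite the difference $w := z_1^\epsilon - z_2^\epsilon$ as the solution of a linear reaction--diffusion equation to which Proposition \ref{PDE BOund1} can be applied. Subtracting the two copies of \eqref{PDE12}, one obtains
\begin{equation*}
\partial_t w = \Delta w + g_\epsilon(z_1^\epsilon + v_1) - g_\epsilon(z_2^\epsilon + v_2),
\end{equation*}
with zero initial data and Dirichlet boundary $w(t,0)=0$. A short computation shows that $g_\epsilon\in C^1(\mathbb{R})$ with $g_\epsilon'\leq 0$, uniformly bounded in $u$ (the bound depends on $\epsilon$, but this dependence will not enter the final constant).

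Setting
\begin{equation*}
\psi(t,x) := \int_0^1 g_\epsilon'\bigl(\lambda(z_1^\epsilon+v_1)+(1-\lambda)(z_2^\epsilon+v_2)\bigr)\, d\lambda,
\end{equation*}
the fundamental theorem of calculus gives a continuous, bounded, non-positive function $\psi$ with
\begin{equation*}
g_\epsilon(z_1^\epsilon + v_1) - g_\epsilon(z_2^\epsilon + v_2) = \psi\, w + \psi(v_1-v_2).
\end{equation*}
Hence $w$ solves the linear PDE
\begin{equation*}
\partial_t w = \Delta w + \psi\, w + \psi(v_1-v_2),
\end{equation*}
with zero initial data and Dirichlet condition at zero. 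This is exactly the class of equations handled by Proposition \ref{PDE BOund1} (up to a trivial time-rescaling that replaces $\tfrac12 V''$ by $V''$), and applying it with $V=w$, $F:=v_1-v_2\in \mathscr{C}_r^T$ immediately yields
\begin{equation*}
\|w\|_{\mathscr{C}_r^T}\leq C_{r,T}\|v_1-v_2\|_{\mathscr{C}_r^T},
\end{equation*}
as required. Crucially, the constant in Proposition \ref{PDE BOund1} depends only on $r$ and $T$ and not on the bound of $\psi$, so the estimate is uniform in $\epsilon$, which is what will be needed when passing to the limit $\epsilon\downarrow 0$ in the companion existence argument for the obstacle problem.

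The main technical obstacle is regularity: Proposition \ref{PDE BOund1} is set up for $V\in C^{1,2}_b$, whereas a priori we only know $z_i^\epsilon\in\mathscr{C}_r^T$. Standard semilinear parabolic theory gives classical $C^{1,2}$ solutions, but the boundedness required to rigorously apply the Feynman--Kac representation underlying Proposition \ref{PDE BOund1} is most cleanly obtained by first approximating $v_1,v_2$ by smooth, compactly supported obstacles, proving the estimate for the resulting $z_i^\epsilon$, and then passing to the limit using the continuous dependence of the penalised PDE on the obstacle. A secondary point is that Proposition \ref{PDE BOund1} is stated only for $r>0$; the case $r\leq 0$ is however strictly easier and can be handled by a direct maximum-principle comparison against the explicit super-solution $e^{(r^2+\delta)t}\|v_1-v_2\|_{\mathscr{C}_r^T}e^{rx}$ for arbitrary $\delta>0$, which yields an estimate of the same form with constant $C_{r,T}=e^{r^2 T}$.
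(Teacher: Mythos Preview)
Your argument is correct and takes a genuinely different route from the paper. The paper does not linearise; instead it builds an explicit barrier $w(t,x)=e^{rx+r^2t}\|v_1-v_2\|_{\mathscr{C}_r^t}$, sets $\tilde z=e^{-(\delta+r)x}(z_1^\epsilon-z_2^\epsilon-w)$, and tests the equation satisfied by $\tilde z$ against $\tilde z^+$. The sign of the nonlinear term is used pointwise (on $\{\tilde z\geq 0\}$ one has $z_1^\epsilon+v_1\geq z_2^\epsilon+v_2$, hence the penalisation difference is $\leq 0$), and a Gronwall step gives $\tilde z^+=0$; symmetry then yields the two-sided bound with constant $e^{r^2T}$.

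Your approach---writing $g_\epsilon(z_1^\epsilon+v_1)-g_\epsilon(z_2^\epsilon+v_2)=\psi\,w+\psi(v_1-v_2)$ via the mean-value theorem and feeding this into Proposition~\ref{PDE BOund1}---is shorter and more conceptual once that proposition is available, and it makes the $\epsilon$-independence of the constant transparent (since $C_{r,T}$ in Proposition~\ref{PDE BOund1} does not see $\|\psi\|_\infty$). The paper's barrier method, by contrast, is self-contained and purely PDE-based: it needs no Feynman--Kac representation, no $C^{1,2}_b$ hypothesis on $w$, and no approximation layer to justify regularity. You identify both technical caveats of your route correctly (the regularity mismatch with the hypotheses of Proposition~\ref{PDE BOund1}, and the restriction $r>0$ stated there), and your proposed fixes are sound; in particular your super-solution $e^{(r^2+\delta)t}\|v_1-v_2\|_{\mathscr{C}_r^T}e^{rx}$ for $r\leq 0$ is essentially the paper's own barrier.
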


\begin{proof}
Let $w$ be given by
\begin{equation}
w(t,x)=e^{rx+r^2t}\phi(t),
\end{equation} 
where we define $\phi(t):= \|v_1-v_2\|_{\mathscr{C}_r^t}$. Then we have that 
\begin{equation}
\frac{\partial w}{\partial t} = \Delta w + e^{rx+r^2t}\frac{ \textrm{d}\phi}{\textrm{d}t}.
\end{equation} 
We note here that $\phi$ is positive and increasing, and that we interpret $\frac{\textrm{d}\phi}{\textrm{d}t}$ in a weak sense in the equation above. From the definition of $w$ we see that $w \geq v_2 -v_1.$ Let $\delta >\max(0,-r)$ and define $$\tilde{z}(t,x):= e^{-(\delta + r) x}(z_1^{\epsilon}(t,x) - z_2^{\epsilon}(t,x) - w(t,x)).$$ Then $\tilde{z}$ solves the equation 
\begin{equation}\label{aa}\begin{split}
\frac{\partial \tilde{z}}{\partial t} = & \Delta \tilde{z} + 2(\delta + r) \frac{\partial \tilde{z}}{\partial x} + (\delta + r)^2 \tilde{z} + \frac{e^{-(\delta + r)x}}{\epsilon} \arctan(((z_1^{\epsilon}+v_1) \wedge 0)^2) \\ & -\frac{e^{-(\delta + r) x}}{\epsilon} \arctan(((z_{i}^{\epsilon}+v_i) \wedge 0)^2) -e^{-\delta x + r^2t} \frac{\textrm{d}\phi}{\textrm{d}t},
\end{split}
\end{equation}
with zero initial data and boundary condition $\tilde{z}(t,0)=- e^{r^2t}\phi(t).$ Note that when $\tilde{z} \geq 0$, we have $z_1^{\epsilon}-z_2^{\epsilon} \geq v_2-v_1$, and so $$ \frac{e^{-(\delta + r) x}}{\epsilon} \arctan(((z_1^{\epsilon}+v_1) \wedge 0)^2)- \frac{e^{-(\delta + r) x}}{\epsilon} \arctan(((z_{2}^{\epsilon}+v_2) \wedge 0)^2) \leq 0.$$ Note also that the last term on the right hand side of (\ref{aa}) is negative. Therefore, testing the equation with $\tilde{z}^+$ we obtain that for $t \in [0,T]$

\begin{equation}\begin{split}
\int_0^t \int_0^{\infty} \frac{\partial \tilde{z}}{\partial t}(s,x) \tilde{z}^+(s,x) \textrm{d}x \textrm{d}s \leq & \int_0^t \int_0^{\infty} \Delta \tilde{z}(s,x) \tilde{z}^+(s,x) \textrm{d}x \textrm{d}s \\ &  + 2(\delta + r)\int_0^t \int_0^{\infty}  \frac{\partial \tilde{z}}{\partial x}(s,x) \tilde{z}^+(s,x) \textrm{d}x\textrm{d}s \\ & + (\delta + r)^2 \int_0^{\infty} \tilde{z}(s,x) \tilde{z}^+(s,x) \textrm{d}x \textrm{d}s.
\end{split}
\end{equation}
By integrating by parts and noting that $\tilde{z}^+$ is zero at time $t=0$ and vanishes at $x=0$ and $x=\infty$, we obtain that
\begin{equation}
\frac{1}{2} \|\tilde{z}_t^+\|_{L^2}^2 \leq (\delta + r)^2 \int_0^t \|\tilde{z}^+_s\|_{L^2} \textrm{d}s.
\end{equation}
It follows by an application of Gronwall's inequality that $z_t^+=0$. Therefore, we have that 
$$z_1^{\epsilon}(t,x)- z_2^{\epsilon}(t,x) \leq w(t,x).$$
Interchanging $z_1^{\epsilon}$ and $z_2^{\epsilon}$, we also have that $$z_2^{\epsilon}(t,x)- z_1^{\epsilon}(t,x) \leq w(t,x).$$It follows that 
$$\|z_1^{\epsilon}-z_2^{\epsilon}\|_{\mathscr{C}_r^T} \leq \|w \|_{\mathscr{C}_r^T} =e^{r^2T}\phi(T).$$ 
We therefore obtain that
\begin{equation}
\|z_1^{\epsilon}-z_2^{\epsilon}\|_{\mathscr{C}_r^T} \leq C_{r,T} \|v_1-v_2\|_{\mathscr{C}_r^T}.
\end{equation}
\end{proof}

We are now in position to argue existence for the obstacle problem on $[0,\infty)$.
\begin{prop}\label{infiniteobstacleexist}
Let $r \in \mathbb{R}$ and $v \in \mathscr{C}_r^T$, with $v(0,\cdot) \leq 0$. Then there exists $(z, \eta)$ solving the heat equation with obstacle $v$ and exponential growth $r$.
\end{prop}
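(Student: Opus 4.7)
The plan is to adapt the Nualart--Pardoux penalization method to the exponentially-weighted spaces $\mathscr{C}_r^T$, with Propositions~B.1 and~B.2 providing the essential monotonicity and stability inputs. For each $\epsilon > 0$, I would let $z^\epsilon$ be the classical solution (obtained by a standard fixed-point argument in $\mathscr{C}_r^T$ exploiting the boundedness and Lipschitz character of the nonlinearity) of
\[
\frac{\partial z^\epsilon}{\partial t} = \Delta z^\epsilon + \tfrac{1}{\epsilon}\arctan\bigl(((z^\epsilon - v) \wedge 0)^2\bigr),
\]
with Dirichlet condition $z^\epsilon(t,0) = 0$ and zero initial data. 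A verbatim repeat of Proposition~B.1 (with $v$ replaced by $-v$ in the statement there) shows $(z^\epsilon)_{\epsilon > 0}$ is monotone increasing as $\epsilon \downarrow 0$. Proposition~B.2 applied with $(v_1,v_2) = (v,0)$, noting that $z_2^\epsilon \equiv 0$ solves the zero-obstacle equation, yields the uniform bound $\|z^\epsilon\|_{\mathscr{C}_r^T} \leq C_{r,T}\|v\|_{\mathscr{C}_r^T}$.

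Combining monotonicity with this bound, I would define $z := \lim_{\epsilon \downarrow 0} z^\epsilon$ pointwise, observe $z \in \mathscr{C}_r^T$ with $z(t,0) = 0$ and $z(0,\cdot) = 0$, and note $z \geq v$ (otherwise the penalization term would blow up on a set of positive measure, contradicting the uniform bound passed to $z^\epsilon$ through the mild form). Next, define the positive measures $\eta^\epsilon(\mathrm{d}x,\mathrm{d}s) := \tfrac{1}{\epsilon}\arctan(((z^\epsilon - v) \wedge 0)^2)\,\mathrm{d}x\,\mathrm{d}s$. Testing the PDE for $z^\epsilon$ against $\varphi \in C_c^\infty([0,\infty))$ with $\varphi(0) = 0$ expresses $\eta^\epsilon(\varphi \otimes \mathbbm{1}_{[0,t]})$ as a difference of terms depending only on $z^\epsilon$; dominated convergence against the weight $e^{-rx}$ makes these converge, and choosing non-negative $\varphi$ with a partition-of-unity then gives local uniform total-variation bounds on $\eta^\epsilon$. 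Vague compactness on $(0,\infty)\times[0,T]$ produces a subsequential positive Radon limit $\eta$, and passing to the limit in the identity yields condition~(iii) of the obstacle problem.

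The main obstacle is the complementarity condition~(iv), $\int_0^t\!\int_0^\infty (z-v)\,\eta(\mathrm{d}x,\mathrm{d}s) = 0$. The non-negativity of the left-hand side is automatic from $z \geq v$ and $\eta \geq 0$, so it remains to produce the matching upper bound. The penalized analogue satisfies
\[
\int_0^t\!\!\int_0^\infty (z^\epsilon - v)\,\eta^\epsilon(\mathrm{d}x,\mathrm{d}s) = \int_0^t\!\!\int_0^\infty (z^\epsilon - v)\cdot\tfrac{1}{\epsilon}\arctan\bigl(((z^\epsilon - v) \wedge 0)^2\bigr)\,\mathrm{d}x\,\mathrm{d}s \leq 0,
\]
since the two factors have opposite signs whenever the integrand is nonzero. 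Writing $(z^\epsilon - v) = (z - v) + (z^\epsilon - z)$, using the monotone convergence $z^\epsilon \uparrow z$ together with the continuity of $z - v$, and combining the local total-variation bounds on $\eta^\epsilon$ with the exponential weight to control spatial tails, one passes to the limit on the left to obtain $\int (z-v)\,\mathrm{d}\eta \leq 0$ and close the argument. This limit-exchange on the half-line with exponential weights is the delicate technical point; everything else is a direct analogue of the compact-domain proof in \cite{NP}, with the stability estimate of Proposition~B.2 (applied with $v_1 = v_2$) separately giving uniqueness and the full norm estimate of Theorem~2.8.
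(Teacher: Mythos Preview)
Your outline captures the penalization strategy correctly, but it has a genuine gap: you write ``observe $z \in \mathscr{C}_r^T$'' after defining $z$ as the pointwise monotone limit of the $z^\epsilon$, and this is precisely the step that carries almost all the work. Monotone limits of continuous functions are not automatically continuous, and the uniform bound $\|z^\epsilon\|_{\mathscr{C}_r^T} \leq C_{r,T}\|v\|_{\mathscr{C}_r^T}$ from Proposition~B.2 gives no equicontinuity information whatsoever. Without continuity of $z$ you cannot claim that $(z,\eta)$ lies in the class where the obstacle problem is even posed, nor can you pass to limits in the weak form using dominated convergence against a merely measurable limit.

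The paper handles this by an approximation argument that you have omitted: one takes smooth compactly supported approximations $v_n$ of $v$ in $\mathscr{C}_{r+\delta}^T$, differentiates the penalized equation for the corresponding $z_n^\epsilon$ in both $t$ and $x$, and applies the Feynman--Kac comparison bound of Proposition~4.11 to obtain $\epsilon$-uniform control on $\|\partial_t z_n^\epsilon\|_{\mathscr{C}_{r+\delta}^T}$ and $\|\partial_x z_n^\epsilon\|_{\mathscr{C}_{r+\delta}^T}$ in terms of the derivatives of $v_n$. One then writes $|z(t,x)-z(s,y)| \leq \liminf_{\epsilon}\bigl[|z_n^\epsilon(t,x)-z_n^\epsilon(s,y)| + 2\sup|z^\epsilon - z_n^\epsilon|\bigr]$, bounds the first term by the Lipschitz estimate just obtained, and the second by $\|v-v_n\|_{\mathscr{C}_{r+\delta}^T}$ via Proposition~B.2. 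This is the substantive analytic content of the proof and cannot be skipped.

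A secondary remark: your route to the complementarity condition, passing to the limit in $\int (z^\epsilon - v)\,\mathrm{d}\eta^\epsilon$ with \emph{both} integrand and measure varying, is more delicate than necessary. The paper instead observes that monotonicity forces $\mathrm{supp}(\eta) \subset \mathrm{supp}(\eta_\epsilon)$ for every $\epsilon$, and that $z^\epsilon - v \leq 0$ on $\mathrm{supp}(\eta_\epsilon)$; hence $\int \varphi(z^\epsilon - v)\,\mathrm{d}\eta \leq 0$ for fixed compactly supported $\varphi \geq 0$, and one passes $\epsilon \downarrow 0$ by dominated convergence against the \emph{fixed} locally finite measure $\eta$. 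This sidesteps the joint-limit issue entirely.
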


\begin{proof}
Proposition \ref{penalisationobstacleincreasing} gives that the solutions $z^{\epsilon}$ to the equations (\ref{PDE12}) are increasing as $\epsilon \downarrow 0$. For $x \geq 0$ and $t \in [0,T]$, let 
\begin{equation}
z(t,x):= \lim\limits_{\epsilon \downarrow 0} z^{\epsilon}(t,x).
\end{equation}
By Proposition \ref{obstaclepenalisationcontrol}, we have that 
\begin{equation}
\|z^{\epsilon}\|_{\mathscr{C}_r^T} \leq C_{r,T}\|v\|_{\mathscr{C}_r^T}.
\end{equation}
Letting $\epsilon \downarrow 0$, it follows that 
\begin{equation}\label{obstaclecontrol1}
\sup\limits_{t \in [0,T]} \sup\limits_{x \in [0,1]} |e^{-rx}z(t,x)| \leq C_{r,T} \|v\|_{\mathscr{C}_r^T}.
\end{equation}
We also have that $z$ is continuous. The argument for this is as follows. Let $w_n \in C_c^{\infty}([0,T] \times (0,\infty))$ such that 
\begin{equation}
\sup\limits_{t \in [0,T]} \sup\limits_{x \geq 0} |w_n(t,x)- e^{-(r+\delta)x}v(t,x)| \rightarrow 0.
\end{equation}
Let $v_n(t,x):= e^{(r+\delta)x}w_n(t,x),$ so we have that 
\begin{equation}
\|v-v_n\|_{\mathscr{C}_{r+\delta}^T} \rightarrow 0.
\end{equation}
As we did in order to construct $z$, we define the functions $z_n^{\epsilon}$ to be the solutions to the equations 
\begin{equation}\label{PDE_n}
\frac{\partial z_n^{\epsilon}}{\partial t} = \Delta z_n^{\epsilon} + \frac{1}{\epsilon} \arctan(((z_n^{\epsilon}-v_n) \wedge 0)^2).
\end{equation}
We can then argue as in the proof of Theorem \ref{Holder Continuity2}, differentiating the equation in space and time respectively, and applying Proposition \ref{PDE BOund1} to see that 
\begin{equation*}
\left\|\frac{\partial z_n^{\epsilon}}{\partial x} \right\|_{\mathscr{C}_{r+\delta}^T} \leq C_{r+ \delta, T} \left\| \frac{\partial v_n}{\partial x} \right\|_{\mathscr{C}_{r+\delta}^T},
\end{equation*}
and 
\begin{equation*}
\left\|\frac{\partial z_n^{\epsilon}}{\partial t} \right\|_{\mathscr{C}_{r+\delta}^T} \leq C_{r+ \delta, T} \left\| \frac{\partial v_n}{\partial t} \right\|_{\mathscr{C}_{r+\delta}^T}
\end{equation*}
Note that we use the condition that $v(0,\cdot) \leq 0$ for this step, in order to ensure that the initial data for $\frac{\partial z_n^{\epsilon}}{\partial t}$ is zero. Therefore, $\frac{\partial z_n^{\epsilon}}{\partial x}$ and $\frac{\partial z_n^{\epsilon}}{\partial t}$ are uniformly bounded over $\epsilon$ on compact subsets of $[0,T] \times [0,\infty)$. We can now argue that $z$ is continuous. We have that, for $M>0$ and $(t,x), (s,y) \in [0,T] \times [0,M]$
\begin{equation*}
\begin{split}
|z(t,x)-z(s,y)| & =\lim_{\epsilon \downarrow 0} |z^{\epsilon}(t,x)- z^{\epsilon}(s,y)| \\ & \leq \liminf_{\epsilon \downarrow 0} \left[ |z_n^{\epsilon}(t,x)- z_n^{\epsilon}(s,y)| + 2\sup\limits_{t \in [0,T]} \sup\limits_{x \in [0,M]}\left| z^{\epsilon}(t,x)- z^{\epsilon}_n(t,x) \right| \right].
\end{split}
\end{equation*} 
We know that $$\sup\limits_{t \in [0,T]}\sup\limits_{x \in [0,M]}\left| z^{\epsilon}(t,x)- z^{\epsilon}_n(t,x) \right| \leq e^{(r+\delta)M}\| z^{\epsilon} - z^{\epsilon}_n \|_{\mathscr{C}_{r + \delta}} \leq e^{(r+\delta)M}\| v - v_n \|_{\mathscr{C}_{r + \delta}} \rightarrow 0$$ as $n \rightarrow \infty$, using the bound from Proposition \ref{obstaclepenalisationcontrol}. We also have that 
\begin{equation*}
\begin{split}
|z_n^{\epsilon}(t,x)- z_n^{\epsilon}(s,y)| & \leq e^{(r+\delta)M}\left[ \left\|\frac{\partial z_n^{\epsilon}}{\partial x} \right\|_{\mathscr{C}_{r+\delta}^T} +\left\|\frac{\partial z_n^{\epsilon}}{\partial t} \right\|_{\mathscr{C}_{r+\delta}^T}  \right] \left( |t-s| + |x-y| \right) \\ & \leq C_{M,r+\delta,T} \left[ \left\|\frac{\partial v_n}{\partial x} \right\|_{\mathscr{C}_{r+\delta}^T} +\left\|\frac{\partial v_n}{\partial t} \right\|_{\mathscr{C}_{r+\delta}^T}  \right] \left( |t-s| + |x-y| \right) \rightarrow 0
 \end{split}
\end{equation*}
as $(s,y) \rightarrow (t,x)$. It follows that 
\begin{equation*}
|z(t,x)-z(s,y)| \rightarrow 0
\end{equation*}
as $(s,y) \rightarrow (t,x)$, and so $z$ is continuous. Therefore, $z \in \mathscr{C}_r^T$. We now want to verify that $z$ solves the obstacle problem. Clearly, $z(t,0)=0$ and $z(0,x)=0$ for all $x$. Let $\varphi \in C_c^{\infty}([0,T] \times [0, \infty))$ with $\varphi(t,0)=0$ for every $t$. Testing the equation for $z^{\epsilon}$ with $\varphi$, we see that 
\begin{equation}\label{testpenalisedinfinite}\begin{split}
\int_0^{\infty} z^{\epsilon}(T,x) \varphi(T,x) \textrm{d}x  = &   \int_0^T \int_0^{\infty} z^{\epsilon}(t,x) \frac{\partial \varphi}{\partial t}(t,x)   \textrm{d}x \textrm{d}t+ \int_0^T \int_0^{\infty} z^{\epsilon}(t,x)\frac{\partial^2 \varphi}{\partial x^2}(t,x)\textrm{d}x\textrm{d}t \\ & +  \int_0^T \int_0^{\infty} \varphi(t,x) \frac{1}{\epsilon} \arctan(((z^{\epsilon}(t,x)-v(t,x)) \wedge 0)^2) \textrm{d}x\textrm{d}s.
\end{split}
\end{equation} 
Define $$\eta_{\epsilon}(\textrm{d}t,\textrm{d}x):= \frac{1}{\epsilon} \arctan(((z^{\epsilon}-v) \wedge 0)^2) \textrm{dxdt}.$$
Letting $\epsilon \rightarrow 0$, we see that $\eta_{\epsilon} \rightarrow \eta$ in distribution on $[0,T] \times (0,\infty)$, to some positive distribution $\eta$. It follows from it's positivity that $\eta$ is a measure, and we have that for every $\varphi \in C_c^{\infty}([0,T] \times [0,\infty))$ with $\varphi(t,0)=0$ for every $t$, 
\begin{equation}\begin{split}
\int_0^{\infty} z(T,x) \varphi(T,x) \textrm{d}x  = &   \int_0^T \int_0^{\infty} z(t,x) \frac{\partial \varphi}{\partial t}(t,x)   \textrm{d}x \textrm{d}t+ \int_0^T \int_0^{\infty} z(t,x)\frac{\partial^2 \varphi}{\partial x^2}(t,x)\textrm{d}x\textrm{d}t \\ & +  \int_0^T \int_0^{\infty} \varphi(t,x) \eta(\textrm{d}t,\textrm{d}x).
\end{split}
\end{equation} 
It is left to check that $z-v \geq 0$ and that the integral of $z-v$ against the measure $\eta$ is zero (i.e. the reflection measure is supported on the set where $z$ hits the obstacle, $v$). Multiplying (\ref{testpenalisedinfinite}) by $\epsilon$, we obtain that
\begin{equation}
\int_0^T \int_0^{\infty} \varphi(s,x)\arctan(e^{-rx} ((z^{\epsilon}-v) \wedge 0)^2) \textrm{d}x\textrm{d}s=o(\epsilon).
\end{equation}
Letting $\epsilon \rightarrow 0$, we see that 
\begin{equation}
\int_0^T \int_0^{\infty} \varphi(s,x)\arctan(e^{-rx} ((z-v) \wedge 0)^2) \textrm{d}x\textrm{d}s=0.
\end{equation}
It follows that we must have $z-v \geq 0$. Finally, we want to have that $$\int_0^T \int_0^{\infty} (z(t,x)-v(t,x)) \; \eta(\textrm{dx,dt})=0.$$ Since $z^{\epsilon}$ is increasing as $\epsilon$ decreases, we see that $supp(\eta) \subset supp(\eta_{\epsilon})$ for every $\epsilon >0$. Also, we have that $z^{\epsilon}-v \leq 0$ on the support of $\eta_{\epsilon}$, and so on the support of $\eta$. Therefore, for $\varphi \in C_c^{\infty}([0,T] \times (0,\infty))$ with $\varphi \geq 0$, we have that 
\begin{equation}
- \infty < \int_0^T \int_0^{\infty} \varphi(t,x) (z^{\epsilon}(t,x)-v(t,x)) \; \eta(\textrm{d}t,\textrm{d}x) \leq 0
\end{equation}
almost surely. By applying the DCT, noting that $\eta$ assigns finite mass to compact sets in $(0,\infty)$ almost surely, we obtain that $$\int_0^T \int_0^{\infty} \varphi(t,x)(z(t,x)-v(t,x)) \; \eta(\textrm{d}t,\textrm{d}x) \leq 0.$$ Since $z-v \geq 0$, this integral must be zero. So we have the result, and $(z,\eta)$ is a solution to the obstacle problem.
\end{proof}
We now turn to the problem of uniqueness. The following lemma is an adaptation of the result from Section 2.3 in \cite{NP}.
\begin{lem}\label{obstaclelem}
Let $(z_1, \eta_1)$ and $(z_2,\eta_2)$ be two solutions to the obstacle problem with obstacle $v \in \mathscr{C}_r^T$. Set $\psi(t,x):= z_1(t,x)-z_2(t,x)$. Then, for $\phi \in C_c^{\infty}([0,\infty))$ with $\phi(0)=0$, and $t \in [0,T]$, we have that
$$\int_0^{\infty} \psi^2(t,x) \phi^2(x) \textrm{d}x \leq  \int_0^t \int_0^{\infty} \psi(s,x)^2 (\phi^2)^{\prime \prime}(y) \textrm{d}x\textrm{d}s.$$
\end{lem}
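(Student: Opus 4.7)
The plan is to adapt the test-function computation from Section 2.3 of \cite{NP} to the semi-infinite spatial domain. Since both $z_1$ and $z_2$ weakly solve the heat equation with the same boundary and initial data, the difference $\psi = z_1 - z_2$ weakly satisfies
\begin{equation*}
\frac{\partial \psi}{\partial t} = \Delta \psi + (\eta_1 - \eta_2), \qquad \psi(0,\cdot)=0, \qquad \psi(t,0)=0,
\end{equation*}
in the sense that the weak formulation from the definition of the obstacle problem holds for test functions $\varphi \in C_c^{\infty}([0,\infty))$ with $\varphi(0)=0$.

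The idea is then to formally test this equation against $\psi \phi^2$. Since $\phi$ has compact support in $[0,\infty)$, vanishes at $0$, and $\psi \in \mathscr{C}_r^T$, every spatial integral is absolutely convergent and no boundary terms appear. Integration by parts on the Laplacian term gives
\begin{equation*}
\int_0^{\infty} \psi \Delta \psi \, \phi^2 \, dx = -\int_0^{\infty} \psi_x^2 \phi^2 \, dx + \tfrac{1}{2} \int_0^{\infty} \psi^2 (\phi^2)''\, dx,
\end{equation*}
so combining with the equation for $\psi$ one formally arrives at
\begin{equation*}
\tfrac{1}{2}\int_0^{\infty} \psi^2(t,x)\phi^2(x)\,dx = -\int_0^t\!\!\int_0^{\infty}\psi_x^2 \phi^2 + \tfrac{1}{2}\int_0^t\!\!\int_0^{\infty}\psi^2 (\phi^2)'' + \int_0^t\!\!\int_0^{\infty}\psi \phi^2 \,(\eta_1-\eta_2)(dx,ds).
\end{equation*}
The gradient term is manifestly nonpositive. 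For the reflection terms I would use the support conditions: on $\mathrm{supp}(\eta_1)$ one has $z_1 = v$, so $\psi = v - z_2 \leq 0$ since $z_2 \geq v$; hence $\int \psi \phi^2 \, d\eta_1 \leq 0$. Symmetrically, on $\mathrm{supp}(\eta_2)$ one has $\psi \geq 0$, so $-\int \psi \phi^2 \, d\eta_2 \leq 0$. Dropping all three nonpositive contributions and multiplying by $2$ yields the stated inequality.

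The technical obstacle is that $\psi$ is only continuous, so the direct testing is not literally justified: neither $\psi \phi^2$ is an admissible test function in the weak formulation, nor can one differentiate $\int \psi^2 \phi^2 \, dx$ in time without care. The plan is to regularise in time (for instance, using a Steklov average $\psi^{\epsilon}(t,x) = \epsilon^{-1}\int_t^{t+\epsilon} \psi(s,x)\, ds$), perform the integration by parts for the smoothed process, and then pass $\epsilon \downarrow 0$. Because $\phi^2$ has compact support in $[0,\infty)$, the convergence of all integrals is a local statement and reduces to exactly the argument carried out in Section 2.3 of \cite{NP}. The most delicate limit is the passage $\int \psi^{\epsilon} \phi^2 \, d\eta_i \to \int \psi \phi^2 \, d\eta_i$ while preserving the sign, which follows from the uniform continuity of $\psi$ on the compact spatial region $\mathrm{supp}(\phi)\times[0,T]$ together with the local finiteness of $\eta_i$ on $(0,\infty)\times[0,T]$ (a consequence of the weak formulation tested against a fixed nonnegative cutoff).
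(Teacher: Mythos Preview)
Your formal computation and the handling of the reflection terms are exactly right, and the overall strategy matches the paper's: test the equation for $\psi$ against (a regularisation of) $\psi\phi^2$, drop the nonpositive gradient and measure contributions, and rearrange. The paper likewise adapts Section~2.3 of \cite{NP} to the half-line.

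The one genuine gap is your suggested regularisation. A time-only Steklov average $\psi^{\epsilon}(t,x)=\epsilon^{-1}\int_t^{t+\epsilon}\psi(s,x)\,ds$ gives you differentiability in $t$ but leaves $\psi^{\epsilon}(t,\cdot)$ merely continuous in $x$; it is still not an admissible test function in the weak formulation, and you cannot justify the spatial integration by parts that produces the $-\int\psi_x^2\phi^2$ term (indeed $\psi_x$ need not exist). What both \cite{NP} and the paper do instead is mollify $\psi\phi$ in \emph{space and time} by a kernel $\epsilon_n(t)\epsilon_m(x)$ with $\epsilon$ chosen \emph{positive definite}, and test against $d_{n,m}:=((\psi\phi)\ast\epsilon_{n,m})\phi$. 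The positive-definiteness is what guarantees, after integrating the Laplacian term by parts, that the analogue of your $-\int\psi_x^2\phi^2$ term---namely $-\langle(\partial_x(\psi\phi))\ast\epsilon_m,\,\phi\,\partial_x\psi\rangle$---has the correct sign even before one knows $\psi$ is weakly differentiable; one first verifies the inequality for smooth $\psi$ and then passes to the limit using only $\sup$-norm approximation on $\mathrm{supp}\,\phi$. So your deferral to ``exactly the argument in Section~2.3 of \cite{NP}'' is the right move, but the Steklov example you gave does not actually implement it.
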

\begin{proof}
Fix some $t <T$ and $\phi \in C_c^{\infty}((0,\infty))$. The result would follow if we could test the equation for $\psi$ with the function $\psi(t,x) \phi^2(x)$. Since this isn't possible, as $\psi$ isn't regular enough, we must test with a smooth approximation of this function and take a limit. Let $\epsilon$ be a non-negative function supported on $[-1,1]$ which is symmetric, smooth, positive definite and so that $$\int_{-1}^1 \epsilon(x) \textrm{d}x= 1.$$ We then obtain approximations of the identity, given by $\epsilon_n(x):=n \epsilon(n x)$. We now define the function of two variables, $\epsilon_{n,m}$ to be
\begin{equation}
\epsilon_{n,m}(t,x):= \epsilon_n(t) \epsilon_m(x). 
\end{equation}
Define the function $d_{n,m}$ to be given by
\begin{equation}
d_{n,m}:=((\psi \phi) \ast \epsilon_{n,m}) \phi,
\end{equation}
where $\ast$ here denotes the convolution on $\mathbb{R}^2$. That is, we define  
\begin{equation}
\begin{split}
d_{n,m}(t,x) & = \left(\int_{0}^{\infty} \int_0^{\infty} \psi(s,y)\phi(y) \epsilon_n(t-s) \epsilon_m(x-y) \textrm{d}y \textrm{d}s\right)\phi(x) \\ & = \left(\int_{(t-1/n)^+}^{(t+1/n)} \int_0^{\infty} \psi(s,y)\phi(y) \epsilon_n(t-s) \epsilon_m(x-y) \textrm{d}y \textrm{d}s\right)\phi(x).
\end{split}
\end{equation}
The function $d_{n,m}$ is now a smooth approximation of $\psi(t,x)\phi(x)^2$, so we can test the equation for $\psi$ against this function. Doing so gives
\begin{equation}\begin{split}
\int_0^{\infty} d_{n,m}(t,x) \psi(t,x) \; \textrm{d}x= & \int_0^t \int_0^{\infty} \frac{\partial (d_{n,m})}{\partial t}(s,x) \psi(s,x) \, \textrm{d}x\textrm{d}s + \int_0^t \int_0^{\infty} \psi(s,x) \frac{\partial^2 d_{n,m}}{\partial x^2}(s,x) \textrm{d}x\textrm{d}s \\ & + \int_0^t \int_0^{\infty} d_{n,m}(s,x) \, \eta_1(\textrm{dx,dt})- \int_0^t \int_0^{\infty} d_{n,m}(s,x) \, \eta_2(\textrm{dx,dt}).
\end{split}
\end{equation}
We take the limit for each term separately. The first term is 
\begin{equation}
\int_0^{\infty} d_{n,m}(t,x) \psi(t,x) \; \textrm{d}x.
\end{equation}
As $n,m \rightarrow \infty$, we can apply the DCT to see that this converges to \begin{equation}
\int_0^{\infty} d_{n,m}(t,x) \psi(t,x) \; \textrm{d}x \rightarrow \int_0^{\infty} \psi^2(t,x) \phi^2(x) \textrm{d}x.
\end{equation}
For the second term, we have that
\begin{multline}\label{1}
\int_0^t \int_0^{\infty} \frac{\partial (d_{n,m})}{\partial t}(s,x) \psi(s,x) \, \textrm{d}x\textrm{d}s \\ = \int_0^t \int_{s-1/n}^{s+1/n} \epsilon_n^{\prime}(s-r) \left( \int_0^{\infty} \int_0^{\infty} \psi(r,y) \phi(y) \epsilon_m(x-y) \phi(x) \psi(s,x) \textrm{d}x \textrm{d}y \right) \textrm{d}r\textrm{d}s.
\end{multline}
Define  $$\Gamma_m(r,s):= \int_0^{\infty} \int_0^{\infty} \psi(r,y) \phi(y) \epsilon_m(x-y) \phi(x) \psi(s,x) \textrm{d}x \textrm{d}y.$$ Then, since $\Gamma$ is symmetric in $r,s$ and that $\epsilon^{\prime}(r)=-\epsilon^{\prime}(-r)$, we see that by symmetry, (\ref{1}) is equal to 
\begin{equation}\label{smalldom}
\int_{t-\frac{1}{n}}^t \int_t^{s + \frac{1}{n}} \epsilon_n^{\prime}(s-r) \Gamma_m(r,s) \textrm{d}r \textrm{d}s.
\end{equation}
Let $A_n:= \{(r,s) \; | \; s \in [t-1/n,t], \; r \in [t,s+1/n] \}.$ We can choose $\epsilon$ so that it's derivative is approximately equal to $1$ on the interval $[-1,0]$, and therefore the derivative of $\epsilon_n$ is approximately $n^2$ on $[-1/n,0]$. We also have that $\Gamma_m(r,s)$ is equal to 
\begin{equation*}
\int_0^{\infty} \psi^2(t,x) \phi^2(x) \textrm{d}x + R_{m,n}
\end{equation*}
on $A_n$, where $R_{n,m} \rightarrow 0$ as $n,m \rightarrow \infty.$ It follows from these calculations, and the fact that $\mu_{\textrm{Leb}}(A)= n^2/2$ that (\ref{smalldom}) converges to 
\begin{equation}
\frac{1}{2}\int_0^{\infty} \psi^2(t,x) \phi^2(x) \textrm{d}x
\end{equation}
as $n,m \rightarrow \infty$. The limit of the combination of the reflection terms is at most zero, since 
\begin{equation}
\lim\limits_{n,m \rightarrow \infty} \left[ \int_0^t \int_0^{\infty} d_{n,m}(t,x) \left(\eta_1(\textrm{d}t,\textrm{d}x)- \eta_2(\textrm{d}t,\textrm{d}x) \right) \right] = \int_0^{\infty}  \psi(t,x) \phi(x)^2 \left(\eta_1(\textrm{d}t,\textrm{d}x)- \eta_2(\textrm{d}t,\textrm{d}x) \right).
\end{equation}
Since $z_i+v$ is zero on the support of $\eta_i$, this is equal to 
\begin{equation}
-\int_0^t \int_0^{\infty}  (z_2(s,x)+v(s,x))\phi(x)^2 \eta_1(\textrm{d}s, \textrm{d}x)-\int_0^t \int_0^{\infty}  (z_1(s,x)+v(s,x))\phi(x)^2 \eta_2(\textrm{d}s, \textrm{d}x) \leq 0.
\end{equation}
Finally, we deal with the second derivative term, 
\begin{equation}\label{2}
\int_0^t \int_0^{\infty} \psi(s,x) \frac{\partial^2 d_{n,m}}{\partial x^2}(s,x) \textrm{d}x\textrm{d}s.\end{equation}
Letting $n \rightarrow \infty$, we have that this converges to \begin{equation}\label{3}
\int_0^t \int_0^{\infty} \psi(s,x) \frac{\partial^2 d_{m}}{\partial x^2}(s,x) \textrm{d}x\textrm{d}s 
\end{equation} where $$d_m(t,x)= \left( \int_0^{\infty} \psi(t,y) \phi(y) \epsilon_m(x-y) \textrm{d}y \right) \phi(x).$$ In order to bound (\ref{3}), we first suppose that $\psi$ is smooth. Integrating by parts and making use of the positive definiteness of $\epsilon$, we obtain that (using angle bracket notation to indicate integration over $[0,\infty)$)
\begin{equation}\label{4}\begin{split}
\left\langle \frac{\partial^2}{\partial x^2}d_m(t,\cdot), \psi(t,\cdot) \right\rangle = & \left\langle \frac{\partial^2}{\partial x^2}(((\psi(t,\cdot)\phi(\cdot)) \ast \epsilon_m) \phi(\cdot)), \psi(t,\cdot) \right\rangle \\ = & - \left\langle \frac{\partial}{\partial x}(((\psi(t,\cdot)\phi(\cdot))\ast\epsilon_m)\phi(\cdot)), \frac{\partial}{\partial x}\psi(t,\cdot) \right\rangle \\ = & -\left\langle (\frac{\partial}{\partial x}(\psi(t, \cdot) \phi(\cdot))\ast \epsilon_m ) \phi(\cdot), \frac{\partial}{\partial x}\psi(t,\cdot) \right\rangle \\ & - \left\langle ((\psi(t,\cdot) \phi(\cdot)) \ast \epsilon_m)\frac{\partial}{\partial x}\phi(\cdot), \frac{\partial}{\partial x}\psi(t,\cdot) \right\rangle \\ = & - \left\langle (\frac{\partial}{\partial x} \psi(t,\cdot) \phi(\cdot)) \ast \epsilon_m, \phi(\cdot) \frac{\partial}{\partial x} \psi(t,\cdot) \right\rangle \\ & - \left\langle (\psi(t,\cdot) \frac{\partial}{\partial x}\phi(\cdot)) \ast \epsilon_m, \phi(\cdot) \frac{\partial}{\partial x} \psi(t,\cdot) \right\rangle \\ & - \left\langle ((\psi(t,\cdot) \phi(\cdot)) \ast \epsilon_m), \frac{\partial}{\partial x}\phi(\cdot) \frac{\partial}{\partial x}\psi(t,\cdot) \right\rangle.
 \end{split}
\end{equation}
We bound this last expression. Positive definiteness of $\epsilon$ ensures that the first term is negative. We split the last term into two, using that $$\frac{\partial \phi}{\partial x} \frac{\partial \psi}{\partial x}= \frac{\partial}{\partial x}\left( \frac{\partial \phi}{\partial x} \psi \right) - \frac{\partial^2 \phi}{\partial x^2}\psi,$$
Simple manipulations then give that (\ref{4}) is at most 
\begin{equation}
\left\langle (\psi(t,\cdot)\phi(\cdot)) \ast \epsilon_m, \frac{\partial^2 \phi}{\partial x^2}(\cdot) \psi(t,\cdot) \right\rangle + \left\langle (\psi(t,\cdot)\frac{\partial \phi}{\partial x}(\cdot)) \ast \epsilon_m, \frac{\partial \phi}{\partial x}(\cdot) \psi(t,\cdot) \right\rangle.
\end{equation}
So we have shown that, if $\psi$ were smooth, then 
\begin{equation}
\left\langle \frac{\partial^2 d_m}{\partial x}(t,\cdot), \psi(t,\cdot) \right\rangle \leq \left\langle (\psi(t,\cdot)\phi(\cdot)) \ast \epsilon_m, \frac{\partial^2 \phi}{\partial x^2}(\cdot) \psi(t,\cdot) \right\rangle + \left\langle (\psi(t,\cdot)\frac{\partial \phi}{\partial x}(\cdot)) \ast \epsilon_m, \frac{\partial \phi}{\partial x}(\cdot) \psi(t,\cdot) \right\rangle.
\end{equation}
By taking a sequence of smooth functions which approximate $\psi$ in the infinity norm on the support of $\phi$, we obtain (making use of the compact support of $\phi$ in order to pass to the limit for the integrals) that this formula holds for our original choice of $\psi$, $\psi= z_1-z_2$. Taking the $\limsup$ of both sides gives that 
\begin{equation}\begin{split}
\limsup\limits_{m \rightarrow \infty} \left\langle \frac{\partial^2 d_m}{\partial x}(t,\cdot), \psi(t,\cdot) \right\rangle & \leq \left\langle \psi(t,\cdot)\phi(\cdot), \frac{\partial^2 \phi}{\partial x^2}(\cdot) \psi(t,\cdot) \right\rangle + \left\langle \psi(t,\cdot) \frac{\partial \phi}{\partial x}(\cdot), \frac{\partial \phi}{\partial x}(\cdot) \psi(t,\cdot) \right\rangle \\ & \leq \left\langle \psi^2(t,\cdot), \frac{\partial}{\partial x}(\phi \frac{\partial \phi}{\partial x}) \right\rangle = \frac{1}{2} \left\langle \psi^2(t,\cdot), \frac{\partial^2 (\phi^2)}{\partial x^2} \right\rangle.
\end{split}
\end{equation}
By the reverse Fatou Lemma, we then obtain that
\begin{equation}
\limsup_{m \rightarrow \infty} \int_0^t \int_0^{\infty} \psi(s,x) \frac{\partial^2 d_m}{\partial x^2}(s,x) \textrm{d}x \textrm{d}s  \leq \frac{1}{2} \int_0^t \int_0^{\infty} \psi^2(s,x) (\phi^2)^{\prime \prime}(x) \textrm{d}x.
\end{equation}
Altogether, we have shown that 
\begin{equation}
\int_0^{\infty} \psi^2(t,x) \phi^2(x) \textrm{d}x \leq \frac{1}{2} \int_0^{\infty} \psi^2(t,x) \phi^2(x) \textrm{d}x + \frac{1}{2} \int_0^t \int_0^{\infty} \psi^2(s,x) (\phi^2)^{\prime \prime}(x) \textrm{d}x.
\end{equation}   
A simple rearrangement of this gives the result for $t<T$. The result for $t=T$ then follows by a simple application of the MCT.
\end{proof}

We want for this inequality to hold when we test with functions which are non-zero at zero. This will produce an extra boundary term on the right hand side, but this will be negative, so the previous inequality will still hold.

\begin{cor}
Let $(z_1, \eta_1)$ and $(z_2,\eta_2)$ be two solutions to the obstacle problem with obstacle $v \in \mathscr{C}_r^T$. Set $\psi(t,x):= z_1(t,x)-z_2(t,x)$. Then, for $\phi \in C_c^{\infty}([0,\infty))$, we have that
$$\int_0^{\infty} \psi^2(t,x) \phi^2(x) \textrm{d}x \leq \int_0^t \int_0^{\infty} \psi^2(s,x) (\phi^2)^{\prime \prime}(x) \textrm{d}x\textrm{d}s.$$
\end{cor}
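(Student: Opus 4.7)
The plan is to approximate the test function $\phi$ by cutoffs vanishing at zero, apply the previous lemma, and pass to the limit, showing that the resulting defect has a non-positive contribution to the right-hand side because $\psi(s, 0) = 0$.

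Concretely, let $\eta_\delta : [0, \infty) \to [0, 1]$ be a smooth non-decreasing cutoff with $\eta_\delta(0) = 0$ and $\eta_\delta(x) = 1$ for $x \geq \delta$. The product $\phi \eta_\delta \in C_c^\infty([0, \infty))$ vanishes at zero, so Lemma \ref{obstaclelem} yields
$$\int_0^\infty \psi^2(t, x) \phi^2(x) \eta_\delta^2(x) \, \textrm{d}x \leq \int_0^t \int_0^\infty \psi^2(s, x) (\phi^2 \eta_\delta^2)''(x) \, \textrm{d}x\textrm{d}s.$$
As $\delta \downarrow 0$ the left-hand side converges to $\int_0^\infty \psi^2(t, x) \phi^2(x) \textrm{d}x$ by monotone convergence, since $\phi^2 \eta_\delta^2 \uparrow \phi^2$ pointwise on $(0, \infty)$ and $\phi$ is compactly supported.

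For the right-hand side I would write $\phi^2 \eta_\delta^2 = \phi^2 - g_\delta$ with $g_\delta := \phi^2(1 - \eta_\delta^2) \geq 0$, which is supported on $[0, \delta]$ with $g_\delta(0) = \phi^2(0)$. The right-hand side then splits as
$$\int_0^t \int_0^\infty \psi^2 (\phi^2)'' \, \textrm{d}x\textrm{d}s \; - \; \int_0^t \int_0^\infty \psi^2 g_\delta'' \, \textrm{d}x\textrm{d}s,$$
the first piece being the desired bound. To preserve the inequality in the limit $\delta \downarrow 0$ it suffices to show that the second piece is non-negative (equivalently, that dropping it leaves a true upper bound). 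Formally integrating by parts twice, the boundary contributions at $x = 0$ vanish because $\psi^2(s, 0) = 0$ and $(\psi^2)'(s, 0) = 2 \psi(s, 0) \partial_x \psi(s, 0) = 0$, leaving $\int_0^\infty \psi^2 g_\delta'' \, \textrm{d}x = \int_0^\infty (\psi^2)'' g_\delta \, \textrm{d}x$, which tends to zero as $\delta \to 0$ since $\|g_\delta\|_{L^1} = O(\delta)$. This is the sense in which the ``extra boundary term'' alluded to in the paragraph preceding the corollary is non-positive.

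The main obstacle is the rigorous justification of this integration by parts: $\psi = z_1 - z_2$ is only continuous, so $(\psi^2)''$ is not well defined pointwise. To get around this, I would carry out the above argument first at the level of the penalized approximations $\psi^\epsilon := z_1^\epsilon - z_2^\epsilon$, where $z_i^\epsilon$ solves (\ref{PDE12}). These are classical solutions of a smooth PDE satisfying $\psi^\epsilon(s, 0) = 0$, so the integration by parts is legitimate and the defect term is genuinely non-negative. The uniform bound in Proposition \ref{obstaclepenalisationcontrol} together with the monotone convergence $z_i^\epsilon \uparrow z_i$ from Proposition \ref{penalisationobstacleincreasing} should then allow passing to the limit $\epsilon \downarrow 0$ to recover the stated inequality for $\psi$.
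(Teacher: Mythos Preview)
Your overall strategy---multiply by a cutoff vanishing at zero, apply Lemma~\ref{obstaclelem}, and control the defect---is exactly what the paper does. The problem is in how you propose to control the defect term.

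Your integration-by-parts heuristic requires two derivatives on $\psi^2$, which as you note is unavailable. The proposed fix via penalized approximations is circular. In the setting of this corollary there is a single obstacle $v$, so the penalized equation \eqref{PDE12} is the same for $i=1,2$; by uniqueness of classical solutions to that semilinear heat equation, $z_1^\epsilon = z_2^\epsilon$ and hence $\psi^\epsilon \equiv 0$. The inequality for $\psi^\epsilon$ is then trivially true but carries no information. If instead you intend $z_i^\epsilon$ to be some penalized approximation of the \emph{arbitrary} solution $z_i$, you would need to know that such an approximation converges to $z_i$---but Proposition~\ref{infiniteobstacleexist} only shows that the $z^\epsilon$ converge to the \emph{particular} solution constructed there. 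Asserting that this limit equals an arbitrary $z_i$ presupposes the uniqueness that this corollary is a step toward proving.

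The paper avoids this by computing the defect directly for a specific cutoff, using only the continuity of $\psi$ and the boundary condition $\psi(s,0)=0$. Concretely, if one takes (a smoothing of) $\eta_\delta^2(x) = \min(x/\delta,1)$, then $g_\delta'' = (\phi^2 (1-\eta_\delta^2))''$ splits into terms of order $O(1)$ supported on $[0,\delta]$ (which integrate against $\psi^2$ to $o(1)$ since $\psi^2 \to 0$ at the boundary) plus a singular part $\phi^2 \cdot (-\tfrac{1}{\delta}\delta_0 + \tfrac{1}{\delta}\delta_\delta)$. The contribution at $x=0$ vanishes because $\psi(s,0)=0$, and the contribution at $x=\delta$ is $+\tfrac{1}{\delta}\psi^2(s,\delta)\phi^2(\delta) \geq 0$. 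Thus $\int_0^\infty \psi^2 g_\delta'' \,\mathrm{d}x \geq o(1)$, which is exactly what you need. This is the ``extra negative contribution'' $-\lim_{\epsilon \to 0}\tfrac{1}{\epsilon}\int_0^t \psi^2(s,\epsilon)\,\mathrm{d}s$ the paper refers to, appearing on the other side of the ledger. No smoothness of $\psi$ and no penalization are needed.
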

\begin{proof}
We take an approximating sequence for $\phi$, using $\phi_n \in C_c^{\infty}((0,\infty))$. Doing so carefully, we are able to see that $(\phi^2)^{\prime \prime}$ gives an extra negative contribution in the limit, of $$-\lim\limits_{\epsilon \rightarrow 0} \frac{1}{\epsilon}\int_0^t \psi^2(s,\epsilon) \textrm{d}s.$$ In particular, the inequality still holds when testing with such $\phi$.
\end{proof}

\begin{prop}\label{obstacleunique}
Let $r>0$. The parabolic obstacle problem with exponential growth $r$ has a unique solution.
\end{prop}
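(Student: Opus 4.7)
The plan is to use the energy inequality supplied by the Corollary (the extension of Lemma~B.3 to test functions non-vanishing at $0$) and exploit the exponential-growth control coming from $z_1,z_2\in\mathscr{C}_r^T$. Let $(z_1,\eta_1),(z_2,\eta_2)$ be two solutions with the same obstacle and set $\psi=z_1-z_2$; then $\psi\in\mathscr{C}_r^T$ with $\psi(0,\cdot)\equiv 0$, and by the Corollary, for every $\phi\in C_c^\infty([0,\infty))$ and every $t\in[0,T]$,
\begin{equation*}
\int_0^\infty \psi^2(t,x)\phi^2(x)\,\mathrm{d}x \;\le\; \int_0^t\int_0^\infty \psi^2(s,x)(\phi^2)''(x)\,\mathrm{d}x\,\mathrm{d}s.
\end{equation*}
The target inequality after passage to the limit will be: for $\lambda>2r$,
\begin{equation*}
F(t):=\int_0^\infty \psi^2(t,x)\,e^{-\lambda x}\,\mathrm{d}x \;\le\; \lambda^2\int_0^t F(s)\,\mathrm{d}s,
\end{equation*}
which, by Gr\"onwall and the continuity of $\psi$, forces $\psi\equiv 0$.

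The first step is to note that, since $|\psi(t,x)|\le 2\|v\|_{\mathscr{C}_r^T}\,C_{r,T}\,e^{rx}$ by the a~priori bound for each $z_i$ established in Proposition~B.2 (the penalisation control), we have $\psi(t,\cdot)^2 e^{-\lambda x}\in L^1([0,\infty))$ uniformly in $t$ whenever $\lambda>2r$, and the same bound holds against $e^{-\lambda x}$ over $[0,T]\times[0,\infty)$. So the right-hand object $F(t)$ is well defined and finite.

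The second step is the cut-off. Choose a smooth $\chi\colon[0,\infty)\to[0,1]$ with $\chi\equiv 1$ on $[0,1]$, $\chi\equiv 0$ on $[2,\infty)$ and set $\chi_n(x):=\chi(x/n)$; then $\|\chi_n'\|_\infty=O(1/n)$ and $\|\chi_n''\|_\infty=O(1/n^2)$, with both derivatives supported in $[n,2n]$. Take $\phi_n(x):=\chi_n(x)\,e^{-\lambda x/2}\in C_c^\infty([0,\infty))$. A direct computation gives
\begin{equation*}
(\phi_n^2)''(x)=e^{-\lambda x}\Bigl[\lambda^2\chi_n(x)^2+R_n(x)\Bigr],\qquad R_n(x):=2(\chi_n')^2+2\chi_n\chi_n''-4\lambda\chi_n\chi_n'.
\end{equation*}
Plug $\phi_n$ into the Corollary's inequality. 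On the left-hand side, monotone/dominated convergence yields $\int_0^\infty \psi^2(t,x)\,\chi_n^2(x)e^{-\lambda x}\,\mathrm{d}x\to F(t)$. On the right-hand side, the main piece $\lambda^2\int_0^t\int \psi^2 \chi_n^2 e^{-\lambda x}$ converges to $\lambda^2\int_0^t F(s)\,\mathrm{d}s$, while the remainder piece $\int_0^t\int \psi^2(s,x)e^{-\lambda x}R_n(x)\,\mathrm{d}x\,\mathrm{d}s$ is bounded by $C\int_0^t\int_n^{2n}\psi^2(s,x)e^{-\lambda x}\,\mathrm{d}x\,\mathrm{d}s$, which tends to $0$ by dominated convergence (the integrand is dominated by a fixed $L^1$ function on $[0,T]\times[0,\infty)$ since $\lambda>2r$, and the supports $[n,2n]$ escape to infinity). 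This produces the advertised Gr\"onwall inequality for $F$.

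The third step is the conclusion: Gr\"onwall gives $F\equiv 0$, hence $\psi(t,\cdot)=0$ Lebesgue-a.e.\ for every $t$, and by the continuity of $\psi$ on $[0,T]\times[0,\infty)$ we get $\psi\equiv 0$. Uniqueness of the measures $\eta_1=\eta_2$ then follows because, once the $z_i$ coincide, the weak form identifies $\eta_1-\eta_2$ with $0$ as a distribution against all admissible test functions. The main obstacle is purely technical and lies in the cut-off step: one needs the $\chi_n',\chi_n''$-terms in $(\phi_n^2)''$ to contribute negligibly after integration against $\psi^2$, which is precisely why the integrability margin $\lambda>2r$ is imposed; this is where the hypothesis $r>0$ enters comfortably (one simply fixes any $\lambda>2r$), whereas no further growth assumption on $v$ beyond $v\in\mathscr{C}_r^T$ is needed.
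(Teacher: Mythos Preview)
Your proof is correct and follows essentially the same route as the paper: invoke the energy inequality of the Corollary, push the test function to an exponential weight $e^{-\lambda x}$ with $\lambda>2r$ via approximation, and conclude by Gr\"onwall. The only technical difference is the approximation device---the paper approximates $e^{-\delta x}$ in $H^{2,2}((0,\infty))$ and expands $((e^{-rx}\phi)^2)''$, whereas you use explicit smooth cut-offs $\chi_n(x)e^{-\lambda x/2}$ and track the remainder supported on $[n,2n]$; both are standard and lead to the same Gr\"onwall inequality.

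Two minor remarks. First, your invocation of Proposition~B.2 for the bound $|\psi(t,x)|\le Ce^{rx}$ is misplaced: that proposition concerns the penalised approximants, and uniqueness must apply to \emph{any} solution, not just the one built by penalisation. Fortunately the correct justification is already in your opening line---$z_1,z_2\in\mathscr{C}_r^T$ by the very definition of a solution with exponential growth $r$, so $\psi\in\mathscr{C}_r^T$ and $\psi^2 e^{-\lambda x}\in L^1$ for $\lambda>2r$ follows immediately; simply drop the reference to B.2. Second, the remark that ``this is where the hypothesis $r>0$ enters comfortably'' is not quite right: the argument works verbatim for any $r\in\mathbb{R}$ once one fixes $\lambda>2r$ (with $\lambda>0$ if $r\le 0$), so the restriction $r>0$ in the statement is not actually used by your argument (nor, in fact, by the paper's).
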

\begin{proof}
The ideas for this proof borrow from those in Theorem 5.3 of \cite{Otobe}. Once again, $\psi= z_1-z_2$ where $(z_1,\eta_1)$ and $(z_2,\eta_2)$ are two solutions to the obstacle problem with exponential growth $r$. Let $\delta >0$. We want to apply the prevous lemma to the function $e^{-(r+\delta)x}$. Let $\phi_n$ be a sequence of functions in $C_c^{\infty}([0,\infty))$ such that $\phi_n \rightarrow \phi$ in $H^{2,2}((0,\infty))$, where $\phi(x) = e^{-\delta x}$. We then have that 
\begin{equation}
\left| \int_0^{\infty} \psi^2(t,x) e^{-2rx} \left(\phi^2(x)- \phi^2_n(x) \right) \textrm{d}x \right| \leq \sup\limits_{t \in [0,T]} \sup\limits_{x \geq 0} \left[ \psi^2(t,x) e^{-2rx} \right] \times \int_0^{\infty} (\phi^2(x) - \phi^2_n(x)) \textrm{d}x.
\end{equation} 
This converges to zero as $n \rightarrow \infty$. Note that for a $C^2$ function $\varphi$, we have that 
\begin{equation}
((e^{-ry}\varphi)^2)^{\prime \prime}(y)=e^{-2ry}\left( 4r^2 \varphi^2(y)-8r\varphi(y)\varphi^{\prime}(y)+2\varphi^{\prime \prime}(y) \varphi(y) +2 (\varphi^{\prime}(y))^2  \right)
\end{equation}
Since 
\begin{equation}
\sup\limits_{t \in [0,T]} \sup\limits_{x \geq 0} \left[ \psi^2(t,x) e^{-2rx} \right] < \infty 
\end{equation} 
almost surely and $\phi_n \rightarrow \phi$ in $H^{2,2}((0,\infty))$, we have that 
\begin{multline}
\int_0^T \int_0^{\infty} \psi^2(s,y) e^{-2ry}\left( 4r^2 \phi_n^2(y)-8r\phi_n(y)\phi_n^{\prime}(y)+2\phi_n^{\prime \prime}(y) \phi_n(y) +2 (\phi_n^{\prime}(y))^2  \right) \textrm{d}s \textrm{d}y \\ \rightarrow \int_0^T \int_0^{\infty} \psi^2(s,y) e^{-2ry}\left( 4r^2 \phi^2(y)-8r\phi(y)\phi^{\prime}(y)+2\phi^{\prime \prime}(y) \phi(y) +2 (\phi^{\prime}(y))^2  \right) \textrm{d}s \textrm{d}y.
\end{multline} 
Hence, the inequality from Lemma 2 still holds with $\phi(x)= e^{-(r+\delta)x}$. Applying the result with this function, we obtain that, for $t \in [0,T]$, 
\begin{equation}
\int_0^{\infty} \psi^2(t,x) e^{-2(r+\delta)x} \textrm{d}x \leq C_{r, \delta} \int_0^t \int_0^{\infty} \psi^2(s,x) e^{-2(r+\delta)x} \textrm{d}x\textrm{d}s.
\end{equation}
By Gronwall, we see that $\int_0^{\infty} \psi^2(t,x) e^{-2(r+\delta)x} \, \textrm{d}x=0$ for all $t \in [0,T]$. Therefore, $\psi=0$, so we have uniqueness for our problem.
\end{proof}

\begin{proof}[Proof of Theorem \ref{Obstacle INfinite}]
By Proposition \ref{infiniteobstacleexist} and Proposition \ref{obstacleunique}, we have existence and uniqueness of a solution. Suppose now that $v_1$, $v_2 \in \mathscr{C}_r^T$ and $z_1$, $z_2$ are solutions to the associated obstacle problems. Then, as in the proof of Proposition \ref{infiniteobstacleexist} and applying the estimate from Proposition \ref{obstaclepenalisationcontrol}, we have for $\epsilon >0$ functions $z_1^{\epsilon}$ and $z_2^{\epsilon}$ such that for $i=1,2$ 
\begin{equation}
z_i(t,x) = \lim\limits_{\epsilon \downarrow 0} z_i^{\epsilon}(t,x)
\end{equation}
and 
\begin{equation}
\|z_1^{\epsilon}-z_2^{\epsilon}\|_{\mathscr{C}_r^T} \leq C_{r,T} \|v_1-v_2\|_{\mathscr{C}_r^T}.
\end{equation}
Letting $\epsilon \downarrow 0$, we obtain that 
\begin{equation}
\|z_1-z_2\|_{\mathscr{C}_r^T} \leq C_{r,T} \|v_1-v_2\|_{\mathscr{C}_r^T}.
\end{equation}
This concludes the proof.
\end{proof}

\end{document}